\newif\ifipco
\def\visible<#1>{}  % beamer command not needed here
\newcommand\ifpdf
\DeclareMathOperator    \conv           {conv}
\DeclareMathOperator    \intr                   {int}
\DeclareMathOperator    \lin                    {lin}
\DeclareMathOperator    \rank                     {rank}
\DeclareMathOperator    \relint         {rel\,int}
\DeclareMathOperator    \verts          {vert}
\newtheorem{theorem}{Theorem}[section]
\newcommand\MkNewTheorem[2]{%
  \newtheorem{#1}{#2}
  \expandafter\def\csname c@#1\endcsname{\c@theorem}
  \expandafter\def\csname p@#1\endcsname{\p@theorem}
  \expandafter\def\csname the#1\endcsname{\thetheorem}
  \expandafter\def\csname #1name\endcsname{#2}
}
\theoremstyle{definition}
\newcommand{\old}[1]{{}}
\newcommand{\bb}{\mathbb}
\newcommand{\R}{\bb R}
\newcommand{\Q}{\bb Q}
\newcommand{\Z}{\bb Z}
\newcommand{\N}{\bb N}
\def\ve#1{\mathchoice{\mbox{\boldmath$\displaystyle\bf#1$}}
{\mbox{\boldmath$\textstyle\bf#1$}}
{\mbox{\boldmath$\scriptstyle\bf#1$}}
{\mbox{\boldmath$\scriptscriptstyle\bf#1$}}}
\newcommand{\mqed}{\hfill \qed}
\newcommand{\picomb}{\pi_{\mathrm{comb}}}
\newcommand{\pisym}{\pi_{\mathrm{sym}}}
\newcommand{\tpisym}{\tilde{\pi}_{\mathrm{sym}}}
\newcommand{\pipwl}{\pi_{\mathrm{pwl}}}
\newcommand{\pifill}{\pi_{\mathrm{fill-in}}}
\renewcommand{\P}{\mathcal P}
\newcommand{\ie}{\textit{i.e.}}
\newcommand{\x}{{\ve x}}
\newcommand{\y}{{\ve y}}
\renewcommand{\a}{{\ve a}}
\renewcommand{\b}{{\ve b}}
\renewcommand{\paragraph}[1]{\medskip \noindent \textbf{#1}}
\newcommand{\B}{U}
\newcommand{\remove}[1]{}
\newenvironment{psmallmatrixbig}{\bigl(\smallmatrix}{\endsmallmatrix\bigr)}
\newcommand\InlineFrac[2]{#1/#2}  % only works for simple arguments
\newcommand\ColVec[3][\relax]% Optional argument is denominator.
\let\frac=\InlineFrac\begin{psmallmatrixbig}#2\vphantom{/}\\#3\vphantom{/}\end{psmallmatrixbig}\egroup
\let\frac=\InlineFrac\begin{psmallmatrixbig}\ifx#200\else#2/#1\fi\\\ifx#300\else#3/#1\fi\end{psmallmatrixbig}\egroup
\title{Minimal cut-generating functions are nearly extreme}
\author{Amitabh Basu\thanks{Dept.~of Applied Mathematics and Statistics, The Johns Hopkins University. A. Basu gratefully acknowledges support from NSF grant CMMI1452820.  }\and Robert Hildebrand\thanks{Institute for Operations Research, Dept. of Mathematics, ETH Z\"urich, Switzerland. Most of this research was conducted while Robert Hildebrand was a postdoctoral researcher at the Institute for Operations Research, Department of Mathematics, ETH Z\"urich.}\and Marco Molinaro\thanks{Computer Science Department, PUC-RIO, Brazil}}
\begin{document}
 \newcommand{\tgreen}[1]{\textsf{\textcolor {ForestGreen} {#1}}}
 \newcommand{\tred}[1]{\texttt{\textcolor {red} {#1}}}
 \newcommand{\tblue}[1]{\textcolor {blue} {#1}}

\maketitle
\begin{abstract}
	We study continuous (strongly) minimal cut generating functions for the model where all variables are integer. We consider both the original Gomory-Johnson setting as well as a recent extension by Cornu\'ejols and Y{\i}ld{\i}z. We show that for any continuous minimal or strongly minimal cut generating function, there exists an extreme cut generating function that approximates the (strongly) minimal function as closely as desired. In other words, the extreme functions are ``dense'' in the set of continuous (strongly) minimal functions.
\end{abstract}

%\clearpage 
%\tableofcontents
%\clearpage

%#################################################################
%#################################################################
%#################################################################
%#################################################################

\section{Introduction} 
\label{sec:introduction}

	Cut-generating functions are an important approach for deriving, understanding, and analyzing general-purpose cutting planes for mixed-integer programs. Given a natural number $n\in \N$ and a closed subset $S \subseteq \R^n\setminus \{0\}$, a {\em cut-generating function (CGF) for $S$}
	%, for the purposes of this paper,
 is a function $\pi\colon \R^n \to \R$ such that for every choice of natural number $k \in \N$ and $k$ vectors $r_1, \ldots, r_k \in \R^n$, the inequality 
	\vspace{-5pt}
	$$\sum_{i=1}^k\pi(r_i)y_i \geq 1$$
	is valid for the set \begin{equation}\label{eq:pure-int}Q_0 = \bigg\{y \in \Z^k_+: \sum_{i=1}^k r_iy_i \in S\bigg\}.\end{equation} Note that CGFs for $S$ only depend on $n$ and $S$, and should work for all choices of $k \in \N$ and $r_1, \ldots, r_k \in \R^n$. %In this paper, we will be concerned with continuous cut generating functions only.

Cut-generating functions were originally considered for sets $S$ of the form $S = b + \Z^n$ for some $b \in \R^n \setminus\Z^n$ by Gomory and Johnson~\cite{infinite,infinite2,johnson} under the name of the {\em infinite group relaxation}. Such sets $S$ will be called {\em affine lattices}. In this case the connection with Integer Programming is clear: the set $Q_0$ is the projection of a mixed-integer set in tableau form $Q_{\textrm{rel}} = \{(x,y) \in \Z^{m} \times \Z^k_+ : x = -b + \sum_i r_i y_i\}$ onto the non-basic variables, so CGFs give valid cuts for this set. Notice that $Q_{\textrm{rel}}$ arises as a relaxation of a generic pure integer program in standard form by dropping the non-negativity of the basic variables $x$. So another important setting of CGFs is one that does not involve this relaxation, namely using sets $S$ of the form $S = b + \Z^n_+$, where $b \in \R^n \setminus \Z^n$ and $b \leq 0$~\cite{yildiz2015integer}; this now corresponds to the projection of the ``unrelaxed'' set $Q_{\textrm{full}} = \{(x,y) \in \Z^{m}_+ \times \Z^k_+ : x = -b + \sum_i r_i y_i\}$. We call such sets $S$ \emph{truncated affine lattices}.
  
	Cut-generating functions have received significant attention in the literature (see the surveys~\cite{basu2015geometric,basu2016light,basu2016light2} and the references therein). One important feature is that cut-generating functions capture known general purpose cuts, for example the prominent GMI cuts and more generally split cuts (when $S$ is an affine lattice) and the \emph{lopsided cuts} of Balas and Qualizza \cite{balas2012monoidal} (when $S$ is a truncated affine lattice). Moreover, the CGF perspective gives a clean way of understanding cuts, since they abstract the finer structure of mixed integer sets and only depend on $n$ and $S$ (for affine and truncated affine lattices this is just the shift vector $b$).

% Recently, more general $S$ sets were considered by Cornu\'ejols and Y{\i}ld{\i}z~\cite{yildiz2015integer}. 

%###############################################################
%###############################################################

\vspace{-0pt}
\paragraph{Strength of cut generating function and extreme functions.} Since their introduction, there has been much interest in understanding what the ``best'' CGFs are -- the ones that cut ``most deeply''. CGFs can be stratified and at the first level we have (strongly) minimal functions. An inequality $\alpha\cdot x \geq \alpha_0$ given by $\alpha \in \R^n$ and $\alpha_0\in \R$ will be called valid for $S$ if every $s \in S$ satisfies $\alpha \cdot s \geq \alpha_0$. A CGF $\pi$ for $S$ is called {\em strongly minimal} if there does not exist a {\em different} CGF $\pi'$, a real number $\beta \geq 0$ and a valid inequality $\alpha\cdot x \geq  \alpha_0$ for $S$, such that for all $r\in \R^n$, $\beta\pi'(r) + \alpha\cdot r \leq \pi(r)$ and $\beta + \alpha_0 \geq 1$~\cite{yildiz2015integer}.\footnote{When $S$ is an affine lattice this notion is equivalent to notion of \emph{minimal inequality} used in the literature.} This definition captures the standard idea of non-dominated inequalities. Gomory and Johnson characterized all continuous strongly minimal functions when $S$ is an affine lattice.%; its simplicity indicates the usefulness of the abstraction provided by CGFs for studying cutting-planes. 

\begin{theorem}[Gomory and Johnson \cite{infinite}] \label{thm:minimal} Let $S =  b + \Z^n$ for some $b \in \R^n\setminus \Z^n$. A continuous function $\pi\colon \R^n \to \R$ is a strongly minimal function if and only if all of the following hold:
\vspace{-5pt}
\begin{itemize}
\item[(i)] $\pi$ is a nonnegative function with $\pi( z) = 0$ for
  all $ z\in \Z^n$,
  \item[(ii)] $\pi$ is subadditive, i.e., $\pi( r_1 +  r_2) \leq \pi( r_1) + \pi( r_2)$ for all $r_1, r_2 \in \R^n$, and 
  \item[(iii)]$\pi$ satisfies $\pi( r) + \pi( b- r) = 1$ for all $ r \in \R^n$. (This condition is known as the symmetry condition.)
 \end{itemize}
\end{theorem}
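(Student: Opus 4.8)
The plan is to prove the two implications separately. \emph{Sufficiency.} Assume $\pi$ satisfies (i)--(iii). First I would record two consequences: $\pi$ is $\Z^n$-periodic, since for $z\in\Z^n$ we have $\pi(r+z)\le\pi(r)+\pi(z)=\pi(r)$ and $\pi(r)\le\pi(r+z)+\pi(-z)=\pi(r+z)$; and $\pi(b)=1$, by putting $r=0$ in (iii) and using $\pi(0)=0$. That $\pi$ is a CGF then follows at once: for $y\in Q_0$ with vectors $r_1,\dots,r_k$, iterating (ii) gives $\sum_i\pi(r_i)y_i\ge\pi\big(\sum_i r_iy_i\big)$, and since $\sum_i r_iy_i\in b+\Z^n$, periodicity and $\pi(b)=1$ yield $\sum_i\pi(r_i)y_i\ge 1$. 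For strong minimality, observe that $S=b+\Z^n$ is unbounded in every direction, so any valid inequality $\alpha\cdot x\ge\alpha_0$ for $S$ has $\alpha=0$ and $\alpha_0\le0$; hence in the definition of strong minimality necessarily $\alpha=0$, $\alpha_0\le0$, so $\beta\ge1$ and $\beta\pi'(r)\le\pi(r)$ for all $r$. Applying the CGF $\pi'$ to the system $r_1=b$, $y_1=1$ gives $\pi'(b)\ge1$, hence $\beta\le\beta\pi'(b)\le\pi(b)=1$, so $\beta=1$ and $\pi'\le\pi$. Finally, applying $\pi'$ to $r_1=r$, $r_2=b-r$, $y_1=y_2=1$ gives $\pi'(r)+\pi'(b-r)\ge1$, and with $\pi'\le\pi$ and (iii), $1\le\pi'(r)+\pi'(b-r)\le\pi(r)+\pi(b-r)=1$, so $\pi'(r)=\pi(r)$ for all $r$, contradicting that $\pi'$ is a \emph{different} CGF. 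Thus $\pi$ is strongly minimal.

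\emph{Necessity.} Assume $\pi$ is continuous and strongly minimal. Since $S$ is unbounded in every direction, strong minimality here implies ordinary minimality: if some different CGF $\pi'\le\pi$ existed, the choice $\beta=1$, $\alpha=0$, $\alpha_0=0$ would contradict the definition. So it suffices to derive (i)--(iii) from ``$\pi$ continuous, a CGF, and not dominated by any different CGF.'' The main device is a family of local surgeries: lower $\pi$ at one point, or on one $\Z^n$-orbit, to a candidate $\pi'\le\pi$; verify $\pi'$ is still a CGF by rewriting each feasible combination so that $\sum_i r_iy_i$ is unchanged; conclude from minimality that $\pi'=\pi$. This yields: (a) subadditivity (ii), by lowering $\pi$ at $u+v$ whenever $\pi(u+v)>\pi(u)+\pi(v)$ and replacing each column equal to $u+v$ of multiplicity $p$ by two columns $u,v$ of multiplicity $p$ (taking $v=0$ also gives $\pi(0)\ge0$); (b) $\pi(z)=0$ for $z\in\Z^n$, hence $\Z^n$-periodicity, by first noting $\pi(z)\ge0$ from the feasible systems $\{b,z\}$ with $y=(1,N)$ as $N\to\infty$, and then zeroing $\pi$ on $\Z^n$ and deleting all lattice columns from every feasible combination (which changes $\sum_i r_iy_i$ only by an element of $\Z^n$); (c) nonnegativity: a continuous $\Z^n$-periodic function is bounded on $\R^n$, so if $\pi(r_0)<0$ then $\pi(Nr_0)\le N\pi(r_0)\to-\infty$ by (ii), a contradiction --- this is the one place continuity is essential; (d) $\pi(b)=1$: the feasible system $\{b\}$ gives $\pi(b)\ge1$, and capping $\pi$ at $1$ on the orbit of $b$ (now that $\pi\ge0$) gives the reverse.

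What remains is the symmetry condition (iii), and this is the step I expect to be the main obstacle. The inequality $\pi(r)+\pi(b-r)\ge1$ is immediate from validity on $\{r,b-r\}$ with $y=(1,1)$. For the reverse inequality the plan is again to produce a dominating CGF whenever it fails, but now a \emph{global} modification is needed. Lowering $\pi$ by $\varepsilon=\pi(\bar r)+\pi(b-\bar r)-1$ on the orbit of a witness $\bar r$ does not obviously preserve validity, because a feasible combination may use the orbit of $\bar r$ far more often than that of $b-\bar r$: rerouting via the identity $\bar r+(b-\bar r)=b$ only absorbs the smaller of the two multiplicities, and the excess $\varepsilon$ per extra use of $\bar r$ is unaccounted for. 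Instead one should replace $\pi$ on every antipodal pair $\{r,b-r\}$ by a pair of values summing to $1$ (so $\pi'(r)\in[1-\pi(b-r),\,\pi(r)]$, which keeps $\pi'\le\pi$) and prove, using subadditivity together with $\pi(b)=1$ to reroute feasible combinations, that this $\pi'$ is again a CGF; since $\pi'$ is a different CGF below $\pi$ whenever symmetry fails, minimality then forces $\pi'=\pi$, i.e.\ (iii). Verifying that such a $\pi'$ stays valid is the delicate part --- it is the genuine content of Gomory and Johnson's argument, and the place where more than the ``easy'' structural properties of $\pi$ is used.
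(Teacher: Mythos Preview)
The paper does not prove Theorem~\ref{thm:minimal}; it is quoted as a known result of Gomory and Johnson, so there is no in-paper argument to compare against. Your sufficiency direction is correct and complete, and in the necessity direction your steps (a)--(d) are sound.

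The gap you flag at the symmetry step is real, but the remedy you propose --- building a globally symmetrized $\pi'\le\pi$ and verifying it is still a CGF --- is not the standard route and runs into exactly the difficulty you anticipate: for instance the natural candidate $\pi'(r)=1-\pi(b-r)$ need not be valid (with $n=1$, $r_1=b/2$, $y_1=2$, validity would force $\pi(b/2)\le\tfrac12$, which is not yet known), and more flexible antipodal choices do not obviously escape the multiplicity issue. The classical argument bypasses this by using minimality to produce \emph{near-tight} constraints rather than a dominating function. Fix $r_0$ with $\pi(r_0)>0$. For each small $\epsilon>0$, lowering $\pi$ by $\epsilon$ on the $\Z^n$-orbit of $r_0$ must destroy validity, so some $y^\epsilon\in Q_0$ uses $r_0$ with multiplicity $m^\epsilon\ge1$ and has $\sum_i\pi(r_i)\,y^\epsilon_i<1+m^\epsilon\epsilon$. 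Since $\pi\ge0$, the left side is at least $m^\epsilon\pi(r_0)$, so $m^\epsilon<1/(\pi(r_0)-\epsilon)$ stays bounded. Removing one copy of $r_0$ from $y^\epsilon$ leaves a nonnegative integer combination whose sum lies in $b-r_0+\Z^n$; subadditivity bounds its $\pi$-value below by $\pi(b-r_0)$, whence
\[
\pi(r_0)+\pi(b-r_0)\ \le\ \sum_i\pi(r_i)\,y^\epsilon_i\ <\ 1+m^\epsilon\epsilon\ \xrightarrow[\epsilon\to0]{}\ 1.
\]
If $\pi(r_0)=0$, run the same argument on $b-r_0$ instead (trivial if both vanish). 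Together with the easy direction $\pi(r)+\pi(b-r)\ge1$ this gives (iii). So the ``delicate rerouting'' you anticipate is unnecessary: one application of subadditivity to a single near-tight constraint suffices.
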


Note that the first two conditions imply that $\pi$ is periodic modulo $\Z^n$, i.e., $\pi(r) = \pi(r +  z)$ for all $z \in \Z^n$. This characterization was also recently generalized by Cornu\'ejols and Y{\i}ld{\i}z to a wider class of sets $S$, which includes truncated affine lattices as well~\cite{yildiz2015integer} (see Theorem \ref{thm:minimal-2} below). %{\color{red}Why is Theorem \ref{thm:minimal-2} separated and put into the next section?}

A stronger notion than strong minimality is that of an {\em extreme function}. We say that a CGF $\pi$ is extreme if there do not exist distinct CGFs $\pi_1$ and $\pi_2$ such that $\pi= \frac{\pi_1+\pi_2}{2}$. This is a subset of strongly minimal functions~\cite{yildiz2015integer} that corresponds to a notion of ``facets'' in the context of CGFs %and is the strongest class of CGFs studied 
(see also \cite{basu2016light,basu2016light2} for other notions of ``facet'' for CGFs). Because of the importance of facet-defining cuts in Integer Programming, there has been substantial interest in obtaining and understanding extreme functions (see \cite{basu2016light,basu2016light2} for a survey). For example, a celebrated result is Gomory and Johnson's 2-Slope Theorem (Theorem \ref{thm:2slope} below) that gives a \emph{sufficient condition} for a CGF to be extreme (in the affine lattice setting with $n=1$; see \cite{3slope,basu-hildebrand-koeppe-molinaro:k+1-slope,yildiz2015integer} for generalizations). 

Unfortunately the structure of extreme functions seems much more complicated than that of minimal functions. For example, even verifying the extremality of a function is not completely understood (see~\cite{basu2016light,basu2016light2} for preliminary steps in this direction); a simple characterization like Theorem~\ref{thm:minimal} seems all the more unlikely. 

%###########################################################
%###########################################################

\paragraph{Our results.} As noted above, it is easy to verify that extreme functions are always strongly minimal. We prove an approximate converse: 
in a strict mathematical sense, strongly minimal functions for $n=1$ are ``close'' to being extreme functions. More precisely, in the affine lattice setting we prove the following.
%We study valid inequalities for the model 
%\begin{equation}
%\label{eqn:model}
%\begin{array}{rl}
%x =& f + \sum_{r \in \R^n} r y_r,\\
%&x \in S,\\ 
%&y_r \in \Z_+, \forall r \in \R^n,\\
%&y \text{ has finite support},\\
%\end{array}
%\end{equation}
%where $S \neq \emptyset$ and $f \in \R^n \setminus S$.
%
	
	\begin{theorem}
	\label{thm:theoremZ}
	Let $S = b + \Z$ for some $b \in \Q\setminus \Z$. Let $\bar{\pi}$ be a continuous strongly minimal function for $S$. Then for every $\epsilon > 0$ there is an extreme ($2$-slope) function $\pi^*$ such that $\|\bar{\pi} - \pi^*\|_{\infty} \le \epsilon$, where $\|\cdot\|_{\infty}$ is the sup norm. 
	\end{theorem}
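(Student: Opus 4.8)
The plan is to build $\pi^*$ in three steps: reduce to a piecewise linear strongly minimal function, perturb it into a ``generic'' strongly minimal function whose only tight subadditivity relations are the forced ones, and then replace each linear segment by a small two-slope oscillation; extremality of $\pi^*$ is then immediate from the $2$-slope theorem (Theorem~\ref{thm:2slope}). For the reduction I would fix $m$ divisible by the denominator of $b$ and take $\bar\pi_m$, the piecewise linear interpolation of $\bar\pi$ on $\tfrac1m\Z$; then $\|\bar\pi-\bar\pi_m\|_\infty\to0$, and $\bar\pi_m$ is again strongly minimal by Theorem~\ref{thm:minimal} — nonnegativity and vanishing on $\Z$ are clear, the symmetry relation holds at the grid points and hence, by affineness between them, everywhere, and subadditivity holds because $(x,y)\mapsto\bar\pi_m(x)+\bar\pi_m(y)-\bar\pi_m(x+y)$ is piecewise linear on each cell $[\tfrac im,\tfrac{i+1}m]\times[\tfrac jm,\tfrac{j+1}m]$ with minimum at a vertex whose coordinates lie in $\tfrac1m\Z$, where subadditivity of $\bar\pi$ applies. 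So one may assume $\bar\pi$ piecewise linear.

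Next I would fix a piecewise linear strongly minimal function $\rho$ for $b+\Z$ that is \emph{strictly} subadditive off the three trivial faces, i.e.\ $\rho(x)+\rho(y)>\rho(x+y)$ unless $x\in\Z$, $y\in\Z$, or $x+y\in b+\Z$ (such $\rho$ exists — one may take a two-slope extreme function with sufficiently generic breakpoints — and its existence is the one external ingredient I would isolate as a lemma), and set $\hat\pi:=(1-t)\bar\pi+t\rho$ for small $t>0$. Convex combinations of strongly minimal functions are strongly minimal, so $\hat\pi$ is piecewise linear strongly minimal with $\|\hat\pi-\bar\pi\|_\infty\le2t$, and since $\hat\pi(x)+\hat\pi(y)-\hat\pi(x+y)\ge t\big(\rho(x)+\rho(y)-\rho(x+y)\big)$, its subadditivity is tight \emph{only} on the three trivial faces; in particular $\{\hat\pi=0\}=\Z$ and $\{\hat\pi=1\}=b+\Z$. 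A short argument from subadditivity shows the steepest and shallowest slopes $s^+,s^-$ of $\hat\pi$ occur on the linear pieces adjacent to $0$ (to the right, resp.\ left), and differentiating the symmetry relation then gives the same adjacent to $b$ (to the left, resp.\ right).

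For the two-slope replacement I would fix a large $M$ divisible by $2$ and by all breakpoint denominators of $\hat\pi$, and on each cell $[\tfrac kM,\tfrac{k+1}M]$ replace the affine graph of $\hat\pi$, of slope $\sigma\in[s^-,s^+]$, by the two-slope ``tooth'' with slopes $s^+,s^-$ and the same endpoints. To keep symmetry, carry this out only on $[0,\tfrac b2]$ and $[b,\tfrac{b+1}2]$ (say, $s^+$ first) and define $\pi^*$ on the complementary half-cells by the reflection $\pi^*(y):=1-\pi^*(b-y)$, which flips each tooth, so mirror cells carry opposite orientations — exactly what symmetry forces. On the pieces adjacent to $0$ and $b$ the slope of $\hat\pi$ is already $s^+$ or $s^-$, so the replacement is trivial there and $\pi^*=\hat\pi$ on a fixed neighbourhood of those points; elsewhere $\hat\pi$ is bounded away from $0$ and $1$, so for $M$ large $\pi^*$, which is within $O(1/M)$ of $\hat\pi$, stays in $[0,1]$. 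Thus $\pi^*$ is continuous, $\Z$-periodic, piecewise linear with exactly the slopes $s^+,s^-$, vanishes on $\Z$, symmetric, lies in $[0,1]$, and $\|\pi^*-\hat\pi\|_\infty=O(1/M)$.

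It then remains only to verify subadditivity of $\pi^*$: granting it, $\pi^*$ is strongly minimal by Theorem~\ref{thm:minimal}, hence extreme by the $2$-slope theorem, and $\|\pi^*-\bar\pi\|_\infty\le\|\bar\pi-\bar\pi_m\|_\infty+2t+O(1/M)\le\epsilon$ after choosing $m$, then $t$, then $M$. I would split the torus of pairs $(x,y)$ into a \emph{fixed} neighbourhood $U_0$ of the trivial lines $\{x\in\Z\},\{y\in\Z\},\{x+y\in b+\Z\}$ — the set where $x$, or $y$, or $x+y-b$ lies in a linear piece of $\hat\pi$ adjacent to $0$, resp.\ $b$ — and its complement. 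Off $U_0$ one has $\hat\pi(x)+\hat\pi(y)-\hat\pi(x+y)\ge\mu_0>0$ for a fixed $\mu_0$ (compactness; the zero set of this function lies in the trivial lines $\subseteq U_0$), so the analogous quantity for $\pi^*$ is $\ge\mu_0-O(1/M)\ge0$ once $M$ is large. On $U_0$ one uses $\pi^*=\hat\pi$ near the relevant point, that all slopes lie in $[s^-,s^+]$, and the symmetry relation: e.g.\ for $x$ in the first piece, $\pi^*(x)=s^+x$ while $\pi^*(x+y)-\pi^*(y)\le s^+x$, whence $\pi^*(x)+\pi^*(y)-\pi^*(x+y)\ge0$; the cases near $\{y\in\Z\}$ and near $\{x+y\in b+\Z\}$ (where $b$ is the global maximum, approached with slope $s^+$ from the left and $s^-$ from the right) are handled identically. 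I expect the crux to be exactly this package: a \emph{direct} two-slope approximation of $\bar\pi$ fails on a two-dimensional tight face of $\bar\pi$, where the $O(1/M)$ errors can carry the wrong sign — which forces the detour through the generic $\hat\pi$ — together with the bookkeeping that lets the teeth simultaneously respect the two slopes, the symmetry relation, and the bounds $0\le\pi^*\le1$.
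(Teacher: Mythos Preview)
Your overall architecture matches the paper's exactly: piecewise-linear approximation, then a convex-combination perturbation to kill unwanted additivities, then a symmetrized two-slope fill-in, then Gomory--Johnson's $2$-slope theorem. Your tooth construction is the same as the paper's fill-in $\pifill(r)=\min_{u\in\tfrac1q\Z}\{\picomb(u)+g(r-u)\}$ restricted to a fine grid, and your subadditivity case split (away from the trivial lines use the slack of $\hat\pi$; near them use the extremal slopes $s^\pm$ and symmetry) is precisely the paper's Cases~1--3 in Lemma~\ref{lemma:symFillIn}.

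The one genuine gap is your perturbing function $\rho$. You ask for a \emph{piecewise linear} strongly minimal $\rho$ with $\rho(x)+\rho(y)>\rho(x+y)$ unless $x\in\Z$, $y\in\Z$, or $x+y\in b+\Z$. No such function exists: if $\rho$ is linear with slope $s^+$ on $[0,\delta]$, then for all $x,y\ge0$ with $x+y\le\delta$ one has $\rho(x)+\rho(y)=s^+x+s^+y=\rho(x+y)$, so $E(\rho)$ always contains a full-dimensional triangle at the origin (and, by symmetry, at $b$). Your proposed construction---``a two-slope extreme function with sufficiently generic breakpoints''---fares worse, not better: two-slope functions typically have large additivity domains (e.g.\ the GMIC is additive on the entire triangle $\{x,y\ge0,\ x+y\le b\}$), and you give no argument that genericity confines $E(\rho)$ near the trivial lines.

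Fortunately your downstream argument only needs the weaker statement that $E(\hat\pi)$ is contained in your neighbourhood $U_0$, and this \emph{can} be arranged---but it requires constructing $\rho$ more carefully than you indicate. The paper does this with an explicit three-slope function $\pi_\delta$ (slopes $\pm\tfrac1{2\delta}$ and $0$) and proves by a direct case analysis on $\verts(\Delta\P_\B)$ that $E(\pi_\delta)\subseteq E_\delta\cup E_b\cup E_{1+b}$; it then chooses $\delta$ smaller than the first breakpoint of the piecewise-linear approximant so that $E_\delta$ sits inside the linear pieces of $\picomb$ adjacent to $0$, which is exactly your $U_0$. Once you replace your nonexistent $\rho$ by such a $\pi_\delta$ (and state the property as ``$E(\rho)$ is contained in an arbitrarily thin neighbourhood of the trivial lines'' rather than ``equals the trivial lines''), the rest of your outline goes through and coincides with the paper's proof.
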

	%\tred{Marco: Should we mention anything about the assumption of continuity?}
	Equivalently, this states that extreme functions are dense, under the sup norm, in the set of strongly minimal functions. This surprising property of CGFs relies on their infinite-dimensional nature: for finite-dimensional polyhedra, a (non-facet) minimal inequality can never be \emph{arbitrarily} close (under any reasonable distance) to a facet.
	
	In the truncated affine lattice setting we prove a similar result under an additional assumption.
% on the strongly minimal functions considered. 
A function $\phi\colon\R\to\R$ is {\em quasi-periodic (with period $d$)}, if there are real numbers $d>0$ and $c \in \R$ such that $\phi(r + d) = \phi(r) + c$ for every $r \in \R$. All explicitly known CGFs from the literature are quasi-periodic. Moreover, quasi-periodic piecewise linear CGFs can be expressed using a finite number of parameters,  
%which makes this class of CGFs 
making them 
attractive from a computational perspective.
	
	%\tred{Marco: Should we say anything about this quasi-periodic assumption? E.g. all known explicit CGFs are quasi-periodic (??)}
	
	\begin{theorem}
	\label{thm:theoremZPlus}
		Let $S = b + \Z_+$, where $b \in \Q \setminus \Z$ and $b \leq 0$. Let $\bar{\pi}$ be a continuous, strongly minimal function for $S$ that is quasi-periodic with rational period. Then for any $M \in \R_+$ and $\epsilon > 0$ there is an extreme ($2$-slope) function $\pi^*$ such that $|\bar{\pi}(r) - \pi^*(r)| \le \epsilon$ for all $r \in [-M,M]$. 
	\end{theorem}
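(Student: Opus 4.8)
\emph{Proof sketch.} The plan is to reduce the statement to the already-established affine-lattice case, Theorem~\ref{thm:theoremZ}, by stripping from $\bar\pi$ the linear ``lopsided'' part dictated by the Cornu\'ejols--Y{\i}ld{\i}z characterization, Theorem~\ref{thm:minimal-2}. First I would exploit the quasi-periodicity hypothesis: writing $\bar\pi(r+d)=\bar\pi(r)+c$ with $d\in\Q_{>0}$, the function $\psi(r):=\bar\pi(r)-\tfrac{c}{d}\,r$ is periodic with period $d$, hence continuous and bounded. Using Theorem~\ref{thm:minimal-2} one checks that the drift $c/d$ must coincide with the coefficient $\alpha$ of the valid inequality $\alpha x\ge\alpha_0$ attached to $\bar\pi$, so $\bar\pi=\psi+\alpha\cdot(\,\cdot\,)$ with $\psi$ exactly $d$-periodic. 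Subadditivity of $\bar\pi$ together with additivity of $r\mapsto\alpha r$ yields subadditivity of $\psi$, and the truncated symmetry condition of Theorem~\ref{thm:minimal-2}, once the term $\alpha r$ is removed, collapses to a genuine Gomory--Johnson symmetry for $\psi$. Thus, after the change of variables $r=d\,u$, the function $u\mapsto\psi(du)$ is a continuous strongly minimal function (in the sense of Theorem~\ref{thm:minimal}) for the affine lattice $b'+\Z$, where $b'\in\Q\setminus\Z$ is the rational shift manufactured from $b$ and $d$; rationality of the period is precisely what keeps $b'$ rational, and hence Theorem~\ref{thm:theoremZ} applicable.

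Next I would apply Theorem~\ref{thm:theoremZ} to $u\mapsto\psi(du)$ with accuracy $\epsilon$, obtaining an extreme ($2$-slope) function $\chi^\ast$ for $b'+\Z$ with $\sup_u|\psi(du)-\chi^\ast(u)|\le\epsilon$. Undoing the scaling and restoring the linear part, set
\[
  \pi^\ast(r):=\chi^\ast(r/d)+\alpha\,r .
\]
Then $\pi^\ast$ is continuous and piecewise linear with exactly two slope values (each slope of $\chi^\ast$ divided by $d$ and shifted by $\alpha$), it is quasi-periodic with period $d$ and drift $c$ (because $\chi^\ast$ is $\Z$-periodic), and $|\bar\pi(r)-\pi^\ast(r)|=|\psi(r)-\chi^\ast(r/d)|\le\epsilon$ for every $r\in\R$, in particular on $[-M,M]$.

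It then remains to certify that $\pi^\ast$ is an extreme cut-generating function for $S=b+\Z_+$. Reading Theorem~\ref{thm:minimal-2} in the reverse direction, the map $\chi\mapsto\chi(\,\cdot/d\,)+\alpha\,(\,\cdot\,)$ sends continuous strongly minimal functions for $b'+\Z$ to continuous strongly minimal functions for $b+\Z_+$: it preserves subadditivity, converts the Gomory--Johnson symmetry back into the truncated symmetry with the valid inequality $\alpha x\ge\alpha_0$ reinstated, and imposes no nonnegativity constraint (none is required in the truncated setting). Hence $\pi^\ast$ is strongly minimal for $S$; being in addition continuous, piecewise linear, and $2$-slope, it is extreme by the $2$-slope extremality theorem for truncated affine lattices~\cite{yildiz2015integer}. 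This exhibits the desired $\pi^\ast$.

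The main obstacle is the dictionary between strongly minimal functions for $b+\Z_+$ and for $b'+\Z$ invoked in the first and third steps: it needs the precise statement of Theorem~\ref{thm:minimal-2} and careful verification that (i) a quasi-periodic strongly minimal function for $b+\Z_+$ must have drift exactly $\alpha$, (ii) its residual periodic part, after rescaling by $d$, satisfies all three Gomory--Johnson conditions for the rational affine lattice $b'+\Z$, and (iii) the inverse correspondence carries strongly minimal (respectively extreme) functions to strongly minimal (respectively extreme) functions. Everything else is routine: the approximation estimate transfers because $\psi$ and $\chi^\ast(\,\cdot/d\,)$ are compared over the same bounded periodic profile, and the $2$-slope property is plainly preserved by domain scaling and by adding a linear function. (An alternative, more hands-on route would be to redo the sawtooth construction used for Theorem~\ref{thm:theoremZ} directly in the quasi-periodic setting, but the reduction above is cleaner and isolates exactly where the truncated-lattice structure enters.)
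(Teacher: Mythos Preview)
Your high-level strategy---strip off the linear drift, rescale the domain to get an affine-lattice problem, invoke Theorem~\ref{thm:theoremZ}, then undo the transformation---is exactly the paper's approach. However, your execution has two gaps, one minor and one genuine.

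\textbf{Missing normalization.} Your function $\psi(r)=\bar\pi(r)-\alpha r$ does \emph{not} satisfy the Gomory--Johnson symmetry: from $\bar\pi(r)+\bar\pi(b-r)=1$ you get $\psi(r)+\psi(b-r)=1-\alpha b$, not $1$. So $u\mapsto\psi(du)$ is not strongly minimal for $b'+\Z$ and Theorem~\ref{thm:theoremZ} does not apply as stated. The fix is to divide by $1-\alpha b$ (which is positive since $\alpha\ge0$ and $b\le0$); the paper works with $\tilde\pi(u)=\tfrac{1}{1-\alpha b}\bigl(\bar\pi(du)-\alpha du\bigr)$.

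\textbf{The inverse correspondence is not a bijection onto strongly minimal functions.} Your claim that $\chi\mapsto\chi(\cdot/d)+\alpha\cdot(\cdot)$ carries strongly minimal functions for $b'+\Z$ to strongly minimal functions for $b+\Z_+$ is false. Condition~(i) of Theorem~\ref{thm:minimal-2} requires $\pi^\ast(-1)=0$. With your definition (even after inserting the normalization) this reads $\pi^\ast(-1)=(1-\alpha b)\,\chi^\ast(-1/d)-\alpha$, and there is no reason this vanishes: Theorem~\ref{thm:theoremZ} only guarantees $\chi^\ast$ is $\epsilon$-close to $\tilde\pi$, not that it matches $\tilde\pi$ exactly at the particular point $-1/d$. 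The paper confronts this explicitly: after undoing the transformation to obtain $\pi'$, it notes that $\pi'(-1)$ may be nonzero, and applies a further correction $\pi^\ast(r)=\tfrac{1}{\beta}\bigl(\pi'(r)+\pi'(-1)\cdot r\bigr)$ with $\beta=1+\pi'(-1)\,b$, which forces $\pi^\ast(-1)=0$ while preserving subadditivity, symmetry, and the two-slope property. One then has to re-verify the approximation bound on $[-M,M]$, which is where the parameter $M$ actually enters and why the conclusion is only stated on a compact interval rather than in sup norm. Your sketch elides this step entirely, and the ``dictionary'' you flag as the main obstacle is in fact broken in precisely this way.
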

	
%Marco: Spread this remark on the text
%	\begin{remark} We note that strongly minimal functions for affine lattices $S$ (i.e., for the infinite group relaxation) are known in the literature as simply {\em minimal functions}. Thus, Theorem~\ref{thm:theoremZ} shows that extreme functions are dense (under the $\sup$ norm) in the set of minimal functions for the infinite group relaxation.
%	\end{remark}

	Our results imply that for the purpose of cutting-planes, these strongly minimal functions perform essentially as well as extreme functions, at least for the $n=1$ case, i.e., cutting planes from a single row. This points out the limitations of extremality as a measure for the quality of one-dimensional CGFs and suggests the need for alternative measures (see \cite{tspace}). {\em However, we have not been able to establish such results for $n\geq 2$. The question remains open whether extremality is a more useful concept in higher dimensions, making it relevant for multi-row cuts.}

\paragraph{Continuous infinite relaxation.} 	
 While our main results are regarding the infinite group problem, in the last section we consider another well studied model for cutting planes -- the so-called \emph{infinite continuous relaxation}, %[Define the continuous relaxation $C_f$, say one word about connection with cutting plane. Possibly bring Robert's comments on Espinoza's paper here.]
 first introduced in~\cite{BorCor}. We recall here some basic definitions and results about this model. Given a natural number $n\in \N$ and a closed subset $S \subseteq \R^n \setminus\{0\}$, a {\em continuous cut-generating function (CCGF) for $S$} is a function $\psi : \R^n \rightarrow \R$ such that the inequality $$\sum_{i = 1}^k \psi(r^i) s_i \ge 1$$ is valid for

%Given a fractional vector $f \notin \Z^n$ and a set of vectors $\{r^1, \ldots, r^k\}$, we define 
%
\begin{align*}
	C_S(r^1, \ldots, r^k) := \left\{ s \in \R^k_+ : \sum_{i = 1}^k r^i s_i \in S\right\}
\end{align*}
	for every choice of a natural number $k\in \N$ and every set of $k$ vectors $r^1, \ldots, r^k$. Note the contrast with~\eqref{eq:pure-int} where the variables $y_i$ took nonnegative, {\em integer} values, as opposed to the variables $s_i$ which take nonnegative real values in $C_S(r^1, \ldots, r^k)$. This is what is intended by the word ``continuous'' in CCGF; it has nothing to do with continuity in the analytic sense. We will often abbreviate $C_S(r^1, \ldots, r^k)$ as $C_S(R)$ where $R \in \R^{n\times k}$ is the matrix with columns $r^1, \ldots, r^k$.

	We also have the definitions of minimality and extremality as before: a CCGF $\psi$ is \emph{minimal} if there is no other CCGF $\psi'$ such that $\psi' \le \psi$, and is \emph{extreme} if it cannot be written as a convex combination of two distinct CCGFs. 
	
	\begin{quote}
	{\bf In the context of CCGFs, we only consider the case $S=b + \Z^n$ with $b \in \R^n\setminus\Z^n$, i.e., when $S$ is an affine-lattice. We will henceforth denote $C_S(R)$ by $C_b(R)$.}
	\end{quote}
	
	It is a well-known fact (see, for example, the recent survey~\cite{basu2015geometric}) that minimal CCGFs are {\em sublinear}, i.e., convex, positively homogeneous functions. Thus, the ``sup" norm is not a well defined norm on this set, and to compare the distance between two distinct minimal CCGFs, one has to define a better metric. We propose the following natural one. For any positively homogenous function $\psi$, we define
	
	\begin{equation}\label{eq:norm-sublinear}\|\psi\| = \sup_{r\in \R^n: \|r\|_1 = 1} \psi(r).\end{equation}
		
One can then obtain the following results about approximating minimal functions using extreme ones.  The following result was suggested to be true in~\cite{espinoza2010computing}.  

\begin{theorem} \label{thm:cont2d}
	%Consider the two-dimensional continuous relaxation \tred{M: Not properly defined} $C_f$ (i.e. with $n=2$). Let $\psi$ be a minimal function for $C_f$. Then for all $\epsilon > 0$ there exists an extreme function $\psi'$ for $C_f$ such that $\|\gamma - \gamma'\|_\infty < \epsilon$.  
	Let $n=2$ and let $S=b + \Z^2$ for some $b \in \R^2\setminus\Z^2$. Consider any minimal CCGF $\psi$  for $S$. Then for all $\epsilon > 0$ there exists an extreme function $\psi'$ for $S$ such that $\|\psi - \psi'\| < \epsilon$. 
\end{theorem}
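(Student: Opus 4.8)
The plan is to pass to the geometry of maximal lattice-free sets and reduce everything to a perturbation statement about quadrilaterals. Recall the standard dictionary (\cite{BorCor}; see also the survey \cite{basu2015geometric}): a function $\psi\colon\R^2\to\R$ is a minimal CCGF for $S=b+\Z^2$ if and only if there is a maximal lattice-free convex body $B\subseteq\R^2$ with $b\in\intr B$ such that $\psi$ is the gauge of $B-b$, i.e.\ $\psi(r)=\inf\{t>0:b+r/t\in B\}$; moreover every facet of such a $B$ contains a point of $\Z^2$ in its relative interior, and conversely any full-dimensional lattice-free polyhedron with this property is maximal lattice-free. By Lov\'asz's classification of planar maximal lattice-free sets, $B$ is a split, a triangle, or a quadrilateral. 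Since $b$ is fixed, $\psi$ is determined on $\{\|r\|_1=1\}$ by the radial function of $B$ from $b$; hence, for bodies whose boundaries stay a fixed distance from $b$, $\|\psi-\psi'\|$ is small exactly when the corresponding bodies are close in Hausdorff distance, and the map sending a body to its gauge is continuous in that sense. It therefore suffices, given $B$ with $b\in\intr B$, to find a maximal lattice-free $B'$ with $b\in\intr B'$ whose gauge is an \emph{extreme} CCGF, with $B'$ arbitrarily Hausdorff-close to $B$. If $B$ is a split, $\psi$ is, up to the shift, the one-dimensional Gomory mixed-integer function, which is extreme; if $B$ is a triangle, then $B-b$ is Minkowski-indecomposable and a short polarity argument shows $\psi$ cannot be a nontrivial average of two distinct minimal CCGFs, so $\psi$ is extreme. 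In these cases we take $B'=B$.

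The only substantive case is $B=Q$, a quadrilateral with vertices $v_1,v_2,v_3,v_4$ in cyclic order and a lattice point $p_i$ in the relative interior of $[v_i,v_{i+1}]$, $i=1,\dots,4$ (indices mod $4$). Here approximating $Q$ by a triangle or a split is hopeless: a ``fat'' quadrilateral is at positive gauge-distance from every triangle and every split, so we must perturb $Q$ to a nearby \emph{quadrilateral} with extreme gauge. I would parametrize the maximal lattice-free quadrilaterals near $Q$ by rotating each edge-line about its lattice point $p_i$; keeping each $p_i$ in its open edge, keeping $b$ in the interior, and keeping the quadrilateral non-degenerate are open conditions preserved under small perturbations, and each perturbed quadrilateral is automatically maximal lattice-free since each of its edges still contains the point $p_i$ in its relative interior. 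The key claim is that, within this four-parameter family, the quadrilaterals whose gauge is \emph{not} extreme form a nowhere dense set---they should be cut out by a single nontrivial algebraic relation among the four ratios $\|v_i-p_i\|/\|p_i-v_{i+1}\|$, in the spirit of the Cornu\'ejols--Margot analysis of which planar lattice-free quadrilaterals are facet-defining. Granting this, perturb $Q$ by an arbitrarily small amount to a quadrilateral $Q'$ in the family with extreme gauge $\psi'$; by continuity of the gauge, $\|\psi-\psi'\|<\epsilon$, which proves the theorem.

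The main obstacle is exactly this last claim: characterizing extremality of the gauge of a lattice-free quadrilateral and showing the exceptional quadrilaterals are nowhere dense in the above parametrization. I would attack it by polarity. If $\psi=\tfrac12(\psi_1+\psi_2)$ for distinct minimal CCGFs $\psi_i=\gamma_{B_i-b}$, taking polars gives $(Q-b)^\circ=\tfrac12\big((B_1-b)^\circ+(B_2-b)^\circ\big)$, so non-extremality is equivalent to the polar quadrilateral $(Q-b)^\circ$ admitting a representation as the Minkowski average of the polars of two distinct maximal lattice-free bodies (necessarily triangles, quadrilaterals, or slabs). Working out, in terms of the $v_i$ and $p_i$, precisely when no such representation exists should reduce to the negation of a single ratio identity, which yields the desired nowhere-dense exceptional set. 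Finally one must check that the degenerate configurations that must be avoided along the perturbation---two edge-lines becoming parallel, some $p_i$ reaching a vertex, or a vertex escaping to infinity---are themselves non-generic and so can be dodged inside an arbitrarily small Hausdorff neighbourhood of $Q$; these are routine case analyses.
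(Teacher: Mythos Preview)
Your approach is essentially the paper's: pass to the maximal $S$-free body, use the split/triangle/quadrilateral classification, note that split and triangle gauges are always extreme, and in the quadrilateral case perturb so as to violate the Cornu\'ejols--Margot ratio condition. The paper, however, is considerably more direct on both points you flag as obstacles. First, it does not rederive the extremality characterization via polarity and Minkowski decompositions; it simply cites the Cornu\'ejols--Margot result (your ratio identity \emph{is} exactly their condition, so there is nothing further to work out). Second, instead of arguing through Hausdorff continuity of the gauge and genericity in a four-parameter family, the paper uses the explicit formula $\|\gamma_K\| = \max_i |a_i|_\infty$ for $K=\{x:a_i\cdot x\le 1\}$: it tilts a single facet normal $a^1\mapsto a^1+\epsilon a$ with $a$ orthogonal to the lattice point $w^1$ on that facet (so $w^1$ stays on the facet and maximality is preserved), checks that this breaks the ratio condition, and reads off $\|\gamma_K-\gamma_{\tilde K}\|\le\epsilon$ immediately from the formula. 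Your route is correct but does more work than needed.
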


However, we show that this does not hold for $n \ge 3$.

\begin{theorem} \label{thm:contNegative}
	Let $n \ge 3$ and let $S=b + \Z^n$ for some $b \in \R^n\setminus\Z^n$. There exists a minimal function $\psi$ for $S$ and $\epsilon > 0$ such that all extreme functions $\psi'$ for $S$ have $\|\psi - \psi'\| \ge \epsilon$.
\end{theorem}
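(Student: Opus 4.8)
The plan is to use the standard dictionary between minimal CCGFs and maximal lattice-free convex bodies, and to exhibit a single minimal function that is ``robustly non-extreme.'' Write $f$ for the representative of $b$ modulo $\Z^n$. Recall (see the survey \cite{basu2015geometric}) that the minimal CCGFs for $S=b+\Z^n$ are exactly the gauges $\psi_B(r)=\inf\{\lambda\ge 0: f+\lambda^{-1}r\in B\}$ of the maximal lattice-free convex bodies $B$ with $f\in\intr(B)$; moreover every proper convex combination of two CCGFs may be taken with both summands minimal (each CCGF dominates a minimal one, and minimality of the combination then forces the summands to be minimal), so $\psi_B$ is extreme if and only if it is not the midpoint of the gauges of two distinct maximal lattice-free bodies. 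The first step is the elementary observation that, since $f$ lies strictly inside every maximal lattice-free body and all such bodies are sandwiched between two balls of fixed radii about $f$, the quantity $\|\psi_B-\psi_{B'}\|$ is bi-Lipschitz-equivalent to a ``radial distance'' between $B$ and $B'$ viewed from $f$. Consequently, if an extreme function is within $\epsilon$ of $\psi_B$, it is the gauge of a maximal lattice-free body within geometric distance $O(\epsilon)$ of $B$.

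Next I would construct the example in $\R^3$. Choose a maximal lattice-free polytope $B^\ast\subset\R^3$ (from the classification of $3$-dimensional maximal lattice-free polytopes) lying in a positive-dimensional, connected family $\mathcal F$ of maximal lattice-free polytopes that all share the same combinatorial type and the same incidence pattern between facets and the integer points lying in their relative interiors, and pick an interior point $f$ common to all members of a neighborhood in $\mathcal F$; choose $b$ so that $f$ is its representative modulo $\Z^n$ (possible since $f\notin\Z^n$). Picking $B_1,B_2\in\mathcal F$ on opposite sides of $B^\ast$ along a shear direction, one gets $B^\ast=B_1\cap B_2$ with $\psi_{B^\ast}=\tfrac12(\psi_{B_1}+\psi_{B_2})$, so $\psi_{B^\ast}$ is minimal but not extreme. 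For $n>3$, replace $B^\ast$ by $B^\ast\times\R^{n-3}$, which is again maximal lattice-free; the gauge then ignores the last $n-3$ coordinates, and the non-extremality witness and the distance estimate transfer verbatim.

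The crux — and the step I expect to be the main obstacle — is the robustness claim: there is $\delta>0$ so that \emph{every} maximal lattice-free body within geometric distance $\delta$ of $B^\ast$ is again non-extreme. Here one uses that, with $f$ strictly interior and the boundary integer points strictly in the relative interiors of their facets, the combinatorial type and these incidences persist under small perturbations, and a rank count on the linear system ``move the facet hyperplanes while retaining their prescribed integer points and the lattice-free property'' then produces a nontrivial shear family through each nearby body, hence its non-extremality. The subtle point — and precisely what separates $n\ge 3$ from $n=2$ — is that $B^\ast$ must be chosen so that this shear family does \emph{not} degenerate near $B^\ast$ to a rigid (extreme) body: in the plane the only non-extreme maximal lattice-free bodies are type-$1$ quadrilaterals, whose tilting family always limits to an extreme triangle, so they can never be isolated from extreme functions; in $\R^3$ there is enough freedom to build a family whose frontier — where two facets merge or a boundary integer point slides onto a lower-dimensional face — still consists of non-rigid bodies. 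Granting this, the theorem follows: an extreme $\psi'$ with $\|\psi_{B^\ast}-\psi'\|<\epsilon$ for $\epsilon$ small enough that $O(\epsilon)<\delta$ would be the gauge of a maximal lattice-free body within $\delta$ of $B^\ast$, contradicting robustness. So $\psi:=\psi_{B^\ast}$ and this $\epsilon$ witness the statement.
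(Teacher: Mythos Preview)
Your high-level strategy matches the paper's: exhibit a maximal $S$-free body $K$ whose gauge is minimal but not extreme, and show that every minimal CCGF close to $\gamma_K$ comes from a body with the same defect. But there are two genuine problems.

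First, a technical slip: writing ``$B^\ast=B_1\cap B_2$ with $\psi_{B^\ast}=\tfrac12(\psi_{B_1}+\psi_{B_2})$'' conflates two different operations. Intersection of bodies gives the \emph{maximum} of gauges, not their average. What you want is a linear one-parameter family $t\mapsto (a_i(t))_i$ of facet normals so that $\gamma_{B_0}=\tfrac12(\gamma_{B_{-t}}+\gamma_{B_t})$ for small $t$; that is a different statement and needs the combinatorics to be stable along the family.

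Second, and more seriously, you explicitly leave the robustness step unproven (``Granting this, the theorem follows''). This is the entire content of the theorem: you must show that \emph{every} maximal $S$-free body near $B^\ast$ still admits a nontrivial tilting family. Your sketch (``combinatorics persist, then a rank count gives a shear direction'') is plausible for bodies of the same combinatorial type as $B^\ast$, but you have not excluded that some nearby minimal CCGF comes from a body of \emph{different} combinatorial type that happens to be rigid. Controlling all nearby maximal lattice-free bodies, not just nearby members of your family $\mathcal F$, is exactly the hard part.

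The paper closes this gap with a specific construction that makes robustness automatic. It takes $K$ to be a maximal $S$-free \emph{simplex} in $\R^n$ with exactly one point of $S$ in the relative interior of each facet, chosen so that these $n+1$ points lie in an $(n-1)$-dimensional affine subspace (an explicit example is built for every $n\ge 3$). A short rank argument (reducing, via the corner-ray theorem, to a finite polyhedron) shows that for such simplices $\gamma_K$ is extreme iff $K\cap S$ affinely spans $\R^n$; so this $\gamma_K$ is not extreme. For robustness, pick $\epsilon$ so small that the dilate $K_\epsilon=(1+\epsilon M)K$ still satisfies $K_\epsilon\cap S=K\cap S$. If $\|\gamma_K-\gamma'\|<\epsilon$ then the body $K'$ of $\gamma'$ is contained in $K_\epsilon$, so $K'\cap S\subseteq K\cap S$ has at most $n+1$ points; since $K'$ is a maximal $S$-free polytope with a point of $S$ in the relative interior of each facet, $K'$ must itself be a simplex with exactly those same $n+1$ points, whose affine hull is still a proper subspace --- hence $\gamma'$ is not extreme either. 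This argument controls \emph{all} nearby minimal functions, not just those of a fixed combinatorial type, and avoids any delicate perturbation analysis. Your product idea $B^\ast\times\R^{n-3}$ is a reasonable alternative for lifting a 3-dimensional example, but you would still owe the 3-dimensional robustness argument, and the paper's simplex construction handles all $n\ge 3$ uniformly.
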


These results make it plausible that for the infinite group problem the density result may not be true for higher dimensions.

%#############################################################
%#############################################################
%#############################################################
%#############################################################

\section{Preliminaries}
\label{sec:prelim}

\subsection{Strongly minimal functions for truncated affine lattices}

The celebrated characterization of strongly minimal inequalities by Gomory and Johnson was recently extended by Cornu\'ejols and Y{\i}ld{\i}z~\cite{yildiz2015integer} to truncated affine lattices 
%(they actually prove something more general; we state a special case which is relevant for our purposes).
(actually their result is more general, we only state a special case here).

\begin{theorem}[Cornu\'ejols and Y{\i}ld{\i}z~\cite{yildiz2015integer}] \label{thm:minimal-2} Let $S = b + \Z^n_+$, where $b \in \R^n \setminus \Z^n$ and $b \leq 0$. A continuous function $\pi\colon \R^n \to \R$ is a strongly minimal function if and only if all of the following hold:
\vspace{-4pt}
\begin{itemize}
\item[(i)] $\pi( 0) = 0$, and $\pi(- e_i) = 0$ for all unit vectors $e_i$, $i=1, \ldots, n$,
  \item[(ii)] $\pi$ is subadditive, i.e., $\pi( r_1 +  r_2) \leq \pi( r_1) + \pi( r_2)$, and 
  \item[(iii)]$\pi$ satisfies the symmetry condition, i.e., $\pi( r) + \pi( b- r) = 1$ for all $r \in \R^n$. %(This condition is known as the symmetry condition.)
 \end{itemize}
\end{theorem}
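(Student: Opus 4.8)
The plan is to prove the characterisation by establishing the two implications separately. The ``if'' direction is short and self-contained; the ``only if'' direction follows the classical Gomory--Johnson template and is where the real work lies.

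\textbf{Sufficiency.} Assume (i)--(iii). Setting $r=0$ in (iii) and using (i) gives $\pi(b)=1$. First I would check that $\pi$ is a CGF: given $y\in\Z^k_+$ and rays $r_1,\dots,r_k$ with $\sum_{i} r_i y_i = b+z$ for some $z=(z_1,\dots,z_n)\in\Z^n_+$, write $b=\sum_{i} y_i r_i+\sum_{j} z_j(-e_j)$, which expresses $b$ as a sum of at least one vector (since $b\neq 0$), so iterating (ii) and using $\pi(-e_j)=0$ from (i) gives $1=\pi(b)\le\sum_{i} y_i\pi(r_i)$. For strong minimality, suppose a \emph{distinct} CGF $\pi'$, a scalar $\beta\ge 0$ and a valid inequality $\alpha\cdot x\ge\alpha_0$ for $S$ satisfy $\beta\pi'(r)+\alpha\cdot r\le\pi(r)$ for all $r$ and $\beta+\alpha_0\ge 1$. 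Since $S=b+\Z^n_+$ is unbounded along every axis, validity forces $\alpha\ge 0$ and $\alpha_0\le\alpha\cdot b$; as $b\le 0$ this gives $\alpha_0\le 0$, hence $\beta\ge 1$. Adding the domination inequality at $r$ and at $b-r$ and using (iii) gives $\beta(\pi'(r)+\pi'(b-r))+\alpha\cdot b\le 1$; but $r,b-r$ with coefficients $1,1$ sum to $b\in S$, so $\pi'$ being a CGF gives $\pi'(r)+\pi'(b-r)\ge 1$, hence $\beta+\alpha\cdot b\le 1$. Chaining $1\le\beta+\alpha_0\le\beta+\alpha\cdot b\le 1$ forces equality throughout: $\beta+\alpha\cdot b=1$ and $\pi'$ itself satisfies the symmetry condition. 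Substituting back, the domination inequality at $b-r$ rearranges to $\pi(r)\le\beta\pi'(r)+\alpha\cdot r$, so $\pi=\beta\pi'+\alpha\cdot r$ identically. Evaluating at $r=-e_i$ and using (i), $\alpha_i=\beta\pi'(-e_i)$; since $b+e_i\in S$ gives $\pi'(b+e_i)\ge 1$, symmetry of $\pi'$ gives $\pi'(-e_i)\le 0$ and hence $\alpha_i\le 0$, which with $\alpha_i\ge 0$ yields $\alpha=0$. Then $\pi=\beta\pi'$, and the symmetry condition for both forces $\beta=1$, so $\pi'=\pi$ --- a contradiction.

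\textbf{Necessity.} Let $\pi$ be a continuous strongly minimal CGF. Two facts are immediate from the CGF inequality: applied to a single ray $s\in S$ it gives $\pi\ge 1$ on $S$ (so $\pi(b)\ge 1$ and $\pi(b+e_i)\ge 1$), and applied to the pair $r,b-r$ it gives $\pi(r)+\pi(b-r)\ge 1$ for all $r$. Also, strong minimality rules out any CGF $\rho$ with $\rho\le\pi$, $\rho\neq\pi$: such a $\rho$ with $(\beta,\alpha,\alpha_0)=(1,0,0)$ --- legitimate because $0\cdot x\ge 0$ is valid for $S$ and $\beta+\alpha_0=1$ --- would be a certificate against strong minimality. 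Subadditivity then follows the standard route: if $\pi(u+v)>\pi(u)+\pi(v)$ for some $u,v$, the subadditive closure $\rho(r):=\inf\{\sum_{\ell=1}^{m}\pi(a_\ell):\sum_{\ell} a_\ell=r,\ m\ge 1\}$ satisfies $\rho\le\pi$, is strictly smaller than $\pi$ at $u+v$, and is again a CGF --- expanding each ray in a CGF-certificate for $\pi$ by its defining representation yields another nonnegative-integer combination with the same sum in $S$, so the inequality transfers --- contradicting the reduction above.

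\textbf{The remaining conditions and the main obstacle.} The symmetry \emph{equality} and $\pi(-e_i)=0$ come from ``cap-and-close'' arguments. If $\pi(r_0)+\pi(b-r_0)>1$ for some $r_0$, lower $\pi$ at $r_0$ (and compatibly along $r_0+\Z^n$, so as not to break subadditivity) to the value $1-\pi(b-r_0)$ and pass to the subadditive closure; the result is $\le\pi$, strictly smaller at $r_0$, and --- the delicate point --- still a CGF, contradicting the reduction. Hence $\pi(r)+\pi(b-r)=1$ for all $r$; then $\pi(0)\ge 0$ (subadditivity) and $\pi(0)=1-\pi(b)\le 0$ give $\pi(0)=0$ and $\pi(b)=1$; and $\pi(-e_i)=1-\pi(b+e_i)\le 0$ because $b+e_i\in S$, while the reverse $\pi(b+e_i)\le 1$ is obtained by capping $\pi$ at $1$ along $b+\Z^n_+$ and closing. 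The main obstacle, throughout the ``only if'' direction, is exactly the verification that each cap-and-close perturbation is still a CGF --- the one place where the combinatorial definition of a CGF interacts nontrivially with subadditivity --- and this is also where continuity of $\pi$ (keeping the perturbations well-behaved and supporting limiting arguments) and the hypothesis $b\le 0$ (which forces valid inequalities to have $\alpha_0\le\alpha\cdot b\le 0$, hence $\beta\ge 1$, reducing strong minimality to the nonexistence of a strictly smaller CGF) are genuinely used.
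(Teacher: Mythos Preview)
The paper does not prove this theorem: it is quoted as a known result of Cornu\'ejols and Y{\i}ld{\i}z and used as a black box throughout, so there is no ``paper's own proof'' to compare your attempt against.

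That said, a brief assessment of your proposal on its own merits. Your sufficiency direction is essentially complete and correct: the chain $1\le\beta+\alpha_0\le\beta+\alpha\cdot b\le 1$ (using $\alpha\ge 0$, $b\le 0$, and the CGF inequality for $\pi'$ on the pair $r,\,b-r$) is exactly the right engine, and the endgame extracting $\alpha=0$ and $\beta=1$ is clean. Your necessity direction, by contrast, is a plan rather than a proof. The reduction ``strong minimality $\Rightarrow$ no strictly smaller CGF'' via $(\beta,\alpha,\alpha_0)=(1,0,0)$ is fine, and the subadditive-closure step is standard (though you should say a word about why the closure is finite and still a CGF). The real gap is the symmetry step: ``lower $\pi$ at $r_0$ and compatibly along $r_0+\Z^n$'' is not the right move here, because in the truncated-lattice model $\pi$ is \emph{not} $\Z^n$-periodic, and there is no reason a local modification on a coset preserves the CGF property. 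The actual argument in \cite{yildiz2015integer} (and the classical Gomory--Johnson one it generalises) works globally: one shows that the complementary function $r\mapsto 1-\pi(b-r)$, or a suitable average of it with $\pi$, is itself a CGF dominated by $\pi$, forcing equality. If you want to turn your sketch into a proof, that is the construction to write down carefully; the ``cap-and-close'' heuristic as stated does not go through.
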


%############################################################

	\subsection{Approximations using piecewise linear functions} %\rhnote{Can we merge/synchronize this definition with the definition of PWL above}
 We say a  function $\phi\colon\R\to \R$ is \emph{piecewise linear} if there is a set of closed nondegenerate intervals $I_j$, $j\in J$ such that $\R = \bigcup_{j\in J}I_j$, any bounded subset of $\R$ intersects only finitely many intervals, and $\phi$ is affine over each interval $I_j$. The endpoints of the intervals $I_j$ are called the {\em breakpoints} of $\phi$. Note that in this definition, a piecewise linear function is continuous. 
 
	The next lemma shows that continuous strongly minimal functions can be approximated by piecewise linear functions that are also strongly minimal; this can be accomplished by restricting the function to a subgroup and performing a linear 
\ifipco
	interpolation (a proof is presented in the full version of the paper).
\else
% interpolation (see Appendix \ref{app:interpolation} for full proof).
	interpolation.
\fi
 Throughout we use the following notation: Given a subset $X \subseteq \R$ and a function $\phi\colon\R \to \R$, we denote the restriction of $\phi$ to $X$ by $\phi |_X$. 

	\begin{lemma} \label{lemma:interpolation} 
		Let $S$ be an affine lattice $b + \Z$ or a truncated affine lattice $b + \Z_+$ (for $n=1$) with $b\in \Q$. Let $\pi$ be uniformly continuous, strongly minimal function for $S$. Then for every $\epsilon > 0$ there is a continuous strongly minimal function $\pipwl$ for $S$ that is piecewise linear and satisfies $\|\pi - \pipwl\|_\infty \le \epsilon$. %\tred{Marco: do we need assumption $n=1$?}%Moreover, if $$
	\end{lemma}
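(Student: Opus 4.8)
The plan is to obtain $\pipwl$ by restricting $\pi$ to a sufficiently fine uniform grid that contains $b$ and all integers, and then interpolating linearly between consecutive grid points. Uniform continuity of $\pi$ will immediately give the sup-norm estimate. To see that the interpolant is again strongly minimal, I would verify the three conditions of the relevant characterization — Theorem~\ref{thm:minimal} in the affine-lattice case, Theorem~\ref{thm:minimal-2} in the truncated case. Conditions (i) and (iii) transfer essentially for free because the grid contains the relevant special points ($0$, $-1$, the integers, and $b$) and is symmetric under $r\mapsto b-r$; the only real work is subadditivity.

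Concretely: write $b = p/q$ with $q\in\N$, and for a positive multiple $N$ of $q$ let $\pipwl$ be the unique continuous function that agrees with $\pi$ at every point of $\tfrac1N\Z$ and is affine on each interval $[\tfrac kN,\tfrac{k+1}N]$, $k\in\Z$. This is piecewise linear in the sense of the paper, and on each such interval $\pipwl(r) = (1-t)\pi(\tfrac kN)+t\,\pi(\tfrac{k+1}N)$ for the appropriate $t\in[0,1]$. Given $\epsilon>0$, choose $\delta>0$ from uniform continuity of $\pi$ (so $|r-r'|\le\delta$ implies $|\pi(r)-\pi(r')|\le\epsilon$) and then $N$ with $1/N\le\delta$. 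For $r\in[\tfrac kN,\tfrac{k+1}N]$ both endpoints are within $\delta$ of $r$, so both $\pi(\tfrac kN)$ and $\pi(\tfrac{k+1}N)$ lie within $\epsilon$ of $\pi(r)$, hence so does their convex combination $\pipwl(r)$; this yields $\|\pi-\pipwl\|_\infty\le\epsilon$. Since $q\mid N$ we have $\Z\subseteq\tfrac1N\Z$ and $b\in\tfrac1N\Z$, so $\pipwl$ agrees with $\pi$ at $0,-1$, at all integers, and at $b$: condition (i) follows, using additionally (in the affine-lattice case) that a convex combination of the nonnegative numbers $\pi(\tfrac kN)$ is nonnegative. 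For condition (iii), note $r\mapsto b-r$ maps each grid interval affinely onto another grid interval; writing $r=\tfrac kN+\tfrac tN$ one checks that $\pipwl(r)+\pipwl(b-r)$ is a convex combination of two quantities of the form $\pi(r')+\pi(b-r')$ with $r'\in\tfrac1N\Z$, each equal to $1$ by symmetry of $\pi$, hence equal to $1$.

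The crux is subadditivity, and this is the step I expect to be the main obstacle. Consider $\Delta(x,y):=\pipwl(x)+\pipwl(y)-\pipwl(x+y)$ on $\R^2$. It is affine on each region $R_{i,j,k}=\{(x,y): \tfrac iN\le x\le\tfrac{i+1}N,\ \tfrac jN\le y\le\tfrac{j+1}N,\ \tfrac kN\le x+y\le\tfrac{k+1}N\}$, and these regions cover $\R^2$. Each $R_{i,j,k}$ is a bounded polygon, and every vertex of it arises by making two of the six defining inequalities tight — in each of the possible cases this forces $x,y\in\tfrac1N\Z$. Hence if $\Delta$ were negative somewhere, it would be negative at a vertex of the region containing that point, i.e.\ at some $(x,y)\in\tfrac1N\Z\times\tfrac1N\Z$; but there $x+y\in\tfrac1N\Z$ and $\pipwl=\pi$ at all three points, so $\Delta(x,y)=\pi(x)+\pi(y)-\pi(x+y)\ge0$ by subadditivity of $\pi$. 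Thus $\Delta\ge0$ everywhere, $\pipwl$ is subadditive, and by the characterization theorem $\pipwl$ is strongly minimal. Everything outside this last paragraph is bookkeeping; the one genuinely substantive point is the reduction of global subadditivity of the interpolant to the grid-point inequalities, via the observation that the associated planar complex has affine cells whose vertices lie on the grid.
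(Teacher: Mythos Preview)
Your proof is correct and follows essentially the same approach as the paper: restrict $\pi$ to a fine rational grid $\tfrac1N\Z$ containing $b$ and interpolate linearly, then verify the three conditions of Theorem~\ref{thm:minimal} or~\ref{thm:minimal-2}. The paper's own argument is terser---it simply asserts that the interpolant inherits subadditivity and symmetry from $\pi|_{\frac1N\Z}$ and cites an external reference---whereas you supply a self-contained proof of subadditivity via the polyhedral complex, which is exactly the content of the paper's Lemma~\ref{lem:verts} specialized to breakpoints in $\tfrac1N\Z$.
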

	
	\begin{proof}
	Since $\pi$ is uniformly continuous and $b$ is rational, there exists a $q \in \Z_+$ such that $b \in \tfrac{1}{q} \Z$ and the  piecewise linear interpolation $\pipwl$ of $\pi|_{\frac{1}{q} \Z}$ satisfies $\|\pi - \pipwl\|_\infty \le \epsilon$.  
	Since $\pi$ is subadditive and symmetric by Theorems~\ref{thm:minimal} and \ref{thm:minimal-2}, $\pi|_{\frac{1}{q}\Z}$ is subadditive and symmetric for all %$x,y \in \tfrac{1}{q} \Z$.
rational numbers in $\frac{1}{q} \Z$. It is then easy to see that $\pipwl$ is also subadditive and symmetric and therefore satisfies conditions (ii) and (iii) in Theorems~\ref{thm:minimal} and \ref{thm:minimal-2} (see also~\cite[Theorem 8.3]{basu2016light2}). Also, since all integers are in $\tfrac{1}{q} \Z$, condition (i) in Theorems~\ref{thm:minimal} and \ref{thm:minimal-2} is easily verified for $\pipwl$. Thus, $\pipwl$ is strongly minimal, which concludes the proof. \mqed
	\end{proof}

%	\begin{lemma} \label{lemma:interpolationZPlus} 
%		Consider a continuous, strongly minimal, periodic function $\pi$ for~\eqref{eqn:model}$(\Z_+)$. Then for every $\epsilon > 0$ there is a continuous, strongly minimal, periodic function $\pipwl$ that is piecewise linear and satisfies $\|\pi - \pipwl\|_\infty \le \epsilon$.
%	\end{lemma}

%#############################################################
%#############################################################

\subsection{Subadditivity and additivity} We introduce some tools for studying subadditive functions. For any function $\pi\colon \R \to \R$, define a slack function $\Delta\pi\colon \R\times \R \to \R$ as 
\begin{equation}
\Delta\pi(x,y) = \pi(x) + \pi(y) - \pi(x+y).
\end{equation}
Clearly $\pi$ is subadditive if and only if $\Delta \pi \geq 0$.  
We will employ another concept in our analysis which we call the {\em additivity domain}:
$$E(\pi)= \{(x,y) : \Delta \pi(x,y) = 0\}.$$
When $\pi \colon \R \to \R$ is a piecewise linear function periodic modulo $\Z$ with an infinite set of breakpoints  $\B = \{\dots, u_{-1}, u_0, u_1, \dots \}$, by periodicity, we may assume that $\B = \{u_0 , u_1, \dots, u_m \} + \Z$ where $u_0 = 0$, $u_m < 1$ and $u_i < u_{i+1}$. %, then $\Delta \pi$ is  a continuous piecewise linear function that is periodic modulo $\Z^2$.  
The function $\Delta \pi$ is affine on every set $F = \{ (x,y) :  u_i \leq x \leq u_{i+1},  u_j \leq y \leq u_{j+1}, u_k \leq x+y \leq u_{k+1}\}$ where $(u_i, u_{i+1})$, $(u_j, u_{j+1})$, and $(u_k, u_{k+1})$ are pairs of consecutive breakpoints.  The set of all such $F$ forms a polyhedral complex and will be denoted by $\Delta \P_{\B}$. The vertices of any such $F$ will be denoted by $\verts(F)$. If $\mathcal{F}$ is any collection of polyhedra from $\Delta\P_{\B}$, then we define $\verts(\mathcal{F}):= \cup_{F \in \mathcal{F}}\verts(F)$. Note that $\verts(\Delta \P_{\B})$ is exactly the  set of points $(x,y) \in \R^2$ such that either $x,y \in \B$ or $x,x+y \in \B$ or $y, x+y \in \B$.  
The affine structure of $\Delta \pi$ implies the following (for example, see Figure~\ref{fig:pi-delta}).%the union of vertices of all polyhedra from $\mathcal{F}$.

\begin{lemma}\label{lem:verts}
Let $\pi \colon \R \to \R$ be a piecewise linear function periodic modulo $\Z$ with breakpoints in $\B$. Let $\mathcal{F}$ be a collection of polyhedra from $\Delta\P_{\B}$. If $\Delta \pi(x,y) \geq \gamma$ for all $(x,y) \in \verts(\mathcal{F})$ for some $\gamma > 0$, then $\Delta \pi(x,y) \geq \gamma$ for all $(x,y) \in \cup_{F \in \mathcal{F}} F$. 
In particular, $\Delta \pi(x,y) \geq 0$ for all $(x,y) \in \verts(\Delta \P_{\B})$ if and only if $\pi$ is subadditive.
\end{lemma}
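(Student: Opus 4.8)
The plan is to combine the piecewise-affine structure of $\Delta\pi$, already recorded in the paragraph preceding the lemma, with the elementary fact that an affine function on a polytope is a convex combination of its values at the vertices.

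First I would note that each cell $F \in \Delta\P_{\B}$ is a polytope. Indeed, $F = \{(x,y) : u_i \le x \le u_{i+1},\ u_j \le y \le u_{j+1},\ u_k \le x+y \le u_{k+1}\}$ is an intersection of finitely many closed halfspaces, and already the first two pairs of inequalities confine $(x,y)$ to the bounded box $[u_i,u_{i+1}]\times[u_j,u_{j+1}]$; hence $F$ is compact and, by Minkowski--Weyl, $F = \conv(\verts(F))$ with $\verts(F)$ finite. By the observation before the lemma, $\Delta\pi$ restricted to $F$ coincides with an affine map.

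Now fix $\mathcal{F}$ with $\Delta\pi(x,y) \ge \gamma$ for every $(x,y) \in \verts(\mathcal{F}) = \bigcup_{F\in\mathcal{F}}\verts(F)$. Take any $F \in \mathcal{F}$ and any $(x,y) \in F$, and write $(x,y) = \sum_{v \in \verts(F)} \lambda_v v$ as a convex combination of the vertices of $F$. Since $\Delta\pi$ agrees with an affine function on $F$, we get $\Delta\pi(x,y) = \sum_{v} \lambda_v \Delta\pi(v) \ge \gamma \sum_v \lambda_v = \gamma$, using $\Delta\pi(v)\ge\gamma$ for each $v \in \verts(F) \subseteq \verts(\mathcal{F})$. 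Since $F$ and $(x,y)$ were arbitrary, $\Delta\pi \ge \gamma$ on $\bigcup_{F\in\mathcal{F}}F$, which is the first assertion. The argument never uses $\gamma>0$, so it applies verbatim with $\gamma = 0$. For the ``in particular'' statement, the reverse implication is immediate ($\pi$ subadditive means $\Delta\pi \ge 0$ pointwise, in particular on $\verts(\Delta\P_{\B})$); for the forward implication, apply the above with $\mathcal{F} = \Delta\P_{\B}$ and $\gamma = 0$, noting that $\bigcup_{F \in \Delta\P_{\B}} F = \R^2$ because the breakpoints $\B$ partition $\R$ into intervals $[u_i,u_{i+1}]$, so any $(x,y)$ lies in the cell $F$ determined by the intervals containing $x$, $y$, and $x+y$.

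I do not anticipate a genuine obstacle: the crux, that $\Delta\pi$ is affine on each cell, is supplied by the discussion preceding the lemma. The only points needing (routine) care are verifying that each cell $F$ is compact, so that ``$F = \conv(\verts(F))$'' is legitimate, and observing that the cells cover $\R^2$, which is exactly what propagates the vertex condition to a global inequality.
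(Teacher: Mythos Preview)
Your proof is correct and is precisely the argument the paper has in mind: the paper does not give an explicit proof of this lemma but simply states that ``the affine structure of $\Delta\pi$ implies the following,'' and your convex-combination argument on each polytope~$F$ is exactly that implication spelled out. Your observation that the argument works equally well for $\gamma=0$ is the right way to bridge to the ``in particular'' clause.
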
  
%The above lemma can be derived using techniqnues from~\cite{bhk-survey}. %for a detailed study of this function.

%#############################################################
%#############################################################

\subsection{2-Slope Theorems}
Piecewise linear functions where the slope takes exactly two values are referred to as 2-slope functions.
We will use the following two general theorems on extreme functions to show certain 2-slope functions are extreme.

\begin{theorem}[Gomory and Johnson~\cite{infinite}]
\label{thm:2slope}
Let $S = b + \Z$ be an affine lattice and let $\pi$ be a strongly minimal cut generating function for $S$. If $\pi$ is piecewise linear and has exactly two slopes, then it is extreme.
\end{theorem}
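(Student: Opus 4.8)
The plan is to run the classical Gomory--Johnson argument: reduce to convex decompositions into \emph{minimal} functions, transfer the additivity domain $E(\pi)$ across the decomposition, and then use the two-slope hypothesis to force the two components to coincide with $\pi$. So suppose $\pi=\tfrac12(\pi^1+\pi^2)$ for cut-generating functions $\pi^1,\pi^2$; we must show $\pi^1=\pi^2=\pi$. First I would reduce to the case that $\pi^1,\pi^2$ are themselves minimal: every cut-generating function pointwise dominates a minimal one (a standard Zorn's-lemma argument; see the surveys \cite{basu2015geometric,basu2016light2}), so pick minimal $\rho^i\le\pi^i$; then $\tfrac12(\rho^1+\rho^2)$ is again a cut-generating function and is $\le\pi$, so strong minimality of $\pi$ forces $\tfrac12(\rho^1+\rho^2)=\pi$, whence $(\pi^1-\rho^1)+(\pi^2-\rho^2)=0$ with both summands nonnegative, i.e. $\pi^i=\rho^i$ is minimal. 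Thus we may assume $\pi^1,\pi^2$ are nonnegative, subadditive, symmetric, vanish on $\Z$, and are periodic modulo $\Z$ (Theorem~\ref{thm:minimal} and the remark following it). Since $\Delta\pi=\tfrac12\Delta\pi^1+\tfrac12\Delta\pi^2$ and $\Delta\pi^1,\Delta\pi^2\ge 0$, every point of $E(\pi)$ lies in $E(\pi^1)\cap E(\pi^2)$; so it suffices to prove that any minimal function $\bar\pi$ with $E(\pi)\subseteq E(\bar\pi)$ equals $\pi$.

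The crux is to show that the two-slope hypothesis makes $E(\pi)$ large enough to force this rigidity. Write the two slopes as $s^+>0>s^-$: nonnegativity and $\pi(0)=0$ make the first breakpoint interval $[0,u_1]$ have slope $s^+$, and $\pi(1)=0$ makes the last one, $[u_m,1]$, have slope $s^-$. Now for \emph{every} breakpoint interval $I=[u_i,u_{i+1}]$ on which $\pi$ has slope $s^+$, a one-line computation—using only that $\pi$ is affine with slope $s^+$ both on $I$ and on $[0,u_1]$—shows that $\Delta\pi(x,y)=0$ on the two-dimensional region $\{\,x\in I,\ y\in[0,u_1],\ x+y\in I\,\}$, so this region lies in $E(\pi)$. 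Symmetrically, for every breakpoint interval on which $\pi$ has slope $s^-$ there is an analogous two-dimensional region, built from that interval and $[u_m,1]$, contained in $E(\pi)$. This is precisely where ``exactly two slopes'' is essential: the construction hinges on $\pi$ having the same slope on $I$ as on the fixed reference interval, which is automatic here but fails once three or more slopes are allowed.

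Feeding these regions into the Interval Lemma of Gomory and Johnson \cite{infinite} applied to $\bar\pi$ (which is additive on $E(\pi)$), and sliding the boxes along $I$, shows that $\bar\pi$ is affine on each breakpoint interval of $\pi$, with one common slope $\mu^+$ on all intervals where $\pi$ has slope $s^+$ and one common slope $\mu^-$ on the others (the value is uniform because the ``second'' interval is always $[0,u_1]$, resp.\ $[u_m,1]$). In particular $\bar\pi$ is piecewise linear with the same combinatorial slope pattern as $\pi$. Finally, $\bar\pi(1)-\bar\pi(0)=0$ gives $\mu^+L^++\mu^-L^-=0$, where $L^\pm$ are the total lengths of the $s^\pm$-intervals; comparing with $s^+L^++s^-L^-=0$ yields $(\mu^+,\mu^-)=t\,(s^+,s^-)$ for a scalar $t$, and then $\bar\pi(b)-\bar\pi(0)=1=\pi(b)-\pi(0)$ (the symmetry condition, using $\bar\pi(0)=0$) pins $t=1$. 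Hence $\bar\pi$ and $\pi$ have the same slopes on the same intervals and agree at $0$ (a short subadditivity check also matches them at the breakpoints), so $\bar\pi=\pi$; applying this to $\pi^1$ and $\pi^2$ finishes the proof. The main obstacle is this middle step—locating the full-dimensional additive pieces of $E(\pi)$ and getting the Interval Lemma to do its work; the only other technical point is the mild regularity needed for that lemma, which is standard here since the functions in play are piecewise linear.
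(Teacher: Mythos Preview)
The paper does not prove this theorem; it is quoted as a classical result of Gomory and Johnson \cite{infinite} and used as a black box (in the proof of Theorem~\ref{thm:theoremZ}). Your proposal reproduces the original Gomory--Johnson argument and is essentially correct: the reduction to minimal $\pi^1,\pi^2$, the inclusion $E(\pi)\subseteq E(\pi^1)\cap E(\pi^2)$, the location of full-dimensional additive patches via the two reference intervals $[0,u_1]$ and $[u_m,1]$, the Interval Lemma forcing affineness with a common slope on each slope class, and the $2\times2$ linear system from $\bar\pi(1)=0$ and $\bar\pi(b)=1$ pinning $(\mu^+,\mu^-)=(s^+,s^-)$ are all the standard steps. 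Two small clarifications worth tightening: first, the regularity input to the Interval Lemma comes not from $\pi$ being piecewise linear but from $\bar\pi$ being minimal and hence bounded in $[0,1]$---you should say this explicitly, since a priori $\pi^1,\pi^2$ are arbitrary CGFs; second, the additive regions you describe are triangles rather than rectangles, so the ``sliding'' you allude to is actually needed to manufacture genuine product boxes inside them before invoking the Interval Lemma. With those two points spelled out, the argument is complete; the parenthetical ``subadditivity check at the breakpoints'' is unnecessary, since the Interval Lemma already yields affineness on the closed intervals and hence continuity of $\bar\pi$.
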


Recently, this theorem was extended to the case when $S = b + \Z_+$ using a similar proof as Gomory and Johnson used.

\begin{theorem}[Cornu\'ejols and Y{\i}ld{\i}z~\cite{yildiz2015integer}]
\label{thm:2slopeZP}
Let $S = b + \Z_+$ be a truncated affine lattice and let $\pi \colon \R \to \R$ be a strongly minimal cut generating function for $S$. If $\pi$ is such that $\pi(r) \geq 0$ for all $r \geq 0$ and the restriction of $\pi$ to any compact interval is piecewise linear function with exactly two slopes, then $\pi$ is extreme. 
\end{theorem}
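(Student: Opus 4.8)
The statement is the exact analogue, for truncated affine lattices, of the Gomory--Johnson $2$-Slope Theorem (Theorem~\ref{thm:2slope}), so the plan is to replay that argument with Theorem~\ref{thm:minimal-2} in place of Theorem~\ref{thm:minimal} wherever the minimality characterization is used. First I would carry out the standard reduction: suppose $\pi=\frac12(\pi_1+\pi_2)$ for distinct CGFs $\pi_1,\pi_2$. Since midpoints of CGFs are CGFs and, by the standard argument (see~\cite{yildiz2015integer}), one may assume $\pi_1,\pi_2$ are themselves strongly minimal, Theorem~\ref{thm:minimal-2} shows $\pi_1,\pi_2$ are subadditive and symmetric with $\pi_i(0)=\pi_i(-1)=0$. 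From $\Delta\pi=\frac12(\Delta\pi_1+\Delta\pi_2)$ and $\Delta\pi_i\ge 0$ we obtain $E(\pi)\subseteq E(\pi_1)\cap E(\pi_2)$; that is, $\pi_1$ and $\pi_2$ are additive wherever $\pi$ is. It thus suffices to show that any strongly minimal $\pi_1$ with $E(\pi)\subseteq E(\pi_1)$ equals $\pi$, after which $\pi_2=\pi$ follows by symmetry.

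Next I would exploit the two-slope hypothesis to make $E(\pi)$ ``large''. One checks that the two slopes are $s^+>0>s^-$: two nonnegative (resp.\ two nonpositive) slopes would, via subadditivity, contradict $\pi(-1)=\pi(0)=0$ together with $\pi\ge 0$ on $\R_+$, and a zero slope is excluded similarly. On the (locally finite) breakpoint set, $\Delta\pi$ is affine on each brick $F$ of the associated polyhedral complex, with gradient $\big(\mathrm{slope}_\pi(x)-\mathrm{slope}_\pi(x+y),\ \mathrm{slope}_\pi(y)-\mathrm{slope}_\pi(x+y)\big)$, whose entries lie in $\{0,\ s^+-s^-,\ s^--s^+\}$ precisely because only two slope values occur (this is the spirit of Lemma~\ref{lem:verts}). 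Hence, whenever $\Delta\pi$ vanishes at one vertex of a brick on which this gradient is zero, it vanishes on the whole brick. Using the symmetry relation and $\pi(b)=1$ (take $r=0$ in $\pi(r)+\pi(b-r)=1$) to see that the antidiagonal $\{(r,b-r)\}$ lies in $E(\pi)$, and that $r\mapsto b-r$ preserves slopes, one argues that $E(\pi)$ contains full-dimensional bricks whose $x$-, $y$-, and $(x{+}y)$-projections together cover $\R$ --- this is exactly where the two-slope hypothesis does its work. Applying the Gomory--Johnson Interval Lemma to $\pi_1$ on each such brick then shows $\pi_1$ is piecewise linear with breakpoints among those of $\pi$, and that on each such brick $\pi_1$ has one common slope along its three projections.

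Finally I would pin down the slopes of $\pi_1$: propagating the ``common slope'' relations through the overlapping projections of the additive bricks forces $\pi_1$ to take only two slope values, arranged on the pieces of $\pi$ exactly as $s^+$ and $s^-$ are; the boundary values $\pi_1(0)=\pi_1(-1)=0$ and the symmetry condition for $\pi_1$ (which gives $\pi_1(b)=1$, as for $\pi$) then fix these two values to be $s^+$ and $s^-$, so $\pi_1=\pi$. The step I expect to be the main obstacle is the middle one, specifically in the truncated setting: unlike the affine-lattice case, $\pi$ need not be periodic --- subadditivity together with $\pi(-1)=0$ yields only $\pi(r)\le\pi(r+1)$ --- so one cannot first upgrade $\pi_1$ to a function periodic modulo $\Z$ and work within a single period; instead one must track the additivity and slope bookkeeping across all of $\R$, with infinitely many breakpoints, and close it using only the symmetry condition and the two boundary values. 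Verifying that the full-dimensional additive bricks really do cover $\R$ under their three projections in this non-periodic situation is the delicate point.
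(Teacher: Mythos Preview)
The paper does not prove this theorem: it is stated in the Preliminaries (Section~2.4) as a result of Cornu\'ejols and Y{\i}ld{\i}z~\cite{yildiz2015integer}, with the remark that it ``was extended to the case when $S = b + \Z_+$ using a similar proof as Gomory and Johnson used.'' There is no proof in the body or in the appendix to compare your proposal against.

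That said, your sketch is exactly the Gomory--Johnson template the paper alludes to, so in spirit you are aligned with the cited source. A few cautions if you actually want to carry this out rather than just sketch it. First, the reduction ``one may assume $\pi_1,\pi_2$ are themselves strongly minimal'' is not automatic in the truncated-lattice setting; in \cite{yildiz2015integer} this step requires a separate argument (strong minimality is characterized via Theorem~\ref{thm:minimal-2}, and one must show that if $\pi$ is strongly minimal and $\pi=\tfrac12(\pi_1+\pi_2)$ then the $\pi_i$ inherit subadditivity, the symmetry condition, and $\pi_i(0)=\pi_i(-1)=0$). Second, your own diagnosis of the main obstacle is accurate: without periodicity, the Interval-Lemma/covering argument must be run over all of $\R$, and you need to check that the additive bricks coming from the symmetry line $\{(r,b-r)\}$ together with the trivial additivities at $x\in\{0,-1\}$ really do propagate slope information to every piece. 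The hypothesis $\pi(r)\ge 0$ for $r\ge 0$ is used precisely here, to force one slope positive and one negative and to control the sign pattern needed for the covering; your one-line justification of $s^+>0>s^-$ is too quick and should be written out carefully.
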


	\subsection{2-Slope fill-in}
Gomory and Johnson~\cite{infinite,infinite2,johnson} described a procedure called the {\em 2-slope fill-in} that allows us to extend subadditive functions from a subgroup of $\R$ to the whole of $\R$. Let $U$ be a subgroup of $\R$. Let $g\colon\R\to\R$ be a sublinear function, \ie, $g$ is subadditive and $g(\lambda r) = \lambda g(r)$ for all $\lambda \geq 0$ and $r\in \R$. The two-slope fill-in of any function $\phi\colon\R\to \R$ with respect to $U$ and $g$ is defined as $$\phi_{\textrm{fill-in}}(r) = \min_{u\in U} \{\phi(u) + g(r-u)\}.$$

\begin{lemma}[Johnson (Section 7 in~\cite{johnson})]
\label{lem:fill-in}
Let $U$ be any subgroup of $\R$ and let $\phi\colon\R\to\R$ be a function such that $\phi	|_U$ is subadditive, i.e., $\phi(u_1 + u_2) \leq \phi(u_1) + \phi(u_2)$ for all $u_1, u_2 \in U$. Suppose $g$ is a sublinear function such that $\phi \leq g$. Then the 2-slope fill-in $\phi_{\textrm{fill-in}}$ of $\phi$ with respect to $U$ and $g$ is subadditive. Moreover, $\phi_{\textrm{fill-in}} \geq \phi$ and $\phi_{\textrm{fill-in}}|_U = \phi|_U$.
\end{lemma}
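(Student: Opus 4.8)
\medskip\noindent\textbf{Proof proposal.} The plan is to read all three assertions off the defining formula $\phi_{\textrm{fill-in}}(r)=\min_{u\in U}\{\phi(u)+g(r-u)\}$ directly, the only structural facts needed being that $U$ is closed under addition, that $g$ is subadditive, and that $g(0)=0$ (a consequence of positive homogeneity of $g$). As a preliminary I would check that $\phi(u)+g(r-u)$ is bounded below uniformly in $u$, so that the ``$\min$'' is meaningful (or may be replaced by an infimum): subadditivity of $\phi|_U$ gives $\phi(u)\ge\phi(0)-\phi(-u)\ge -g(-u)$ for $u\in U$ (using $\phi\le g$ and $\phi(0)\ge 0$), while subadditivity of $g$ gives $g(r-u)\ge g(-u)-g(-r)$, so $\phi(u)+g(r-u)\ge -g(-r)$ for every $u$. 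In the settings where the lemma gets applied, $U=\tfrac1q\Z$ is discrete and $\phi$ is bounded, so only finitely many $u$ near $r$ matter and the minimum is attained.

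The substantive step is subadditivity of $\phi_{\textrm{fill-in}}$; the other two assertions are essentially immediate. For subadditivity, fix $x,y\in\R$, let $u_x,u_y\in U$ attain the minima defining $\phi_{\textrm{fill-in}}(x)$ and $\phi_{\textrm{fill-in}}(y)$ (or take $\epsilon$-approximate minimizers and let $\epsilon\to 0$), and chain the two subadditivities: since $u_x+u_y\in U$,
\begin{align*}
\phi_{\textrm{fill-in}}(x)+\phi_{\textrm{fill-in}}(y)
&=\big(\phi(u_x)+\phi(u_y)\big)+\big(g(x-u_x)+g(y-u_y)\big)\\
&\ge \phi(u_x+u_y)+g\big((x+y)-(u_x+u_y)\big)\\
&\ge\phi_{\textrm{fill-in}}(x+y),
\end{align*}
where the first inequality applies subadditivity of $\phi|_U$ to the first bracket and subadditivity of $g$ to the second, and the last inequality holds because $u_x+u_y$ is one admissible choice of $u$ in the minimum defining $\phi_{\textrm{fill-in}}(x+y)$. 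Note this uses subadditivity of $\phi$ only along $U$, exactly the hypothesis.

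For $\phi_{\textrm{fill-in}}|_U=\phi|_U$: if $r\in U$, taking $u=r$ shows $\phi_{\textrm{fill-in}}(r)\le\phi(r)+g(0)=\phi(r)$, and conversely for any $u\in U$ we have $r-u\in U$, so $\phi(u)+g(r-u)\ge\phi(u)+\phi(r-u)\ge\phi(r)$ (using $\phi\le g$ then subadditivity of $\phi|_U$), giving the reverse inequality. For $\phi_{\textrm{fill-in}}\ge\phi$ on all of $\R$ I would run the same computation at an arbitrary $r$ with its minimizer $u_r$: $\phi_{\textrm{fill-in}}(r)=\phi(u_r)+g(r-u_r)\ge\phi(u_r)+\phi(r-u_r)\ge\phi(r)$. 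I expect no real obstacle in any of this — the core is a two-line manipulation of the minimum — and the only points worth care are the well-definedness of the minimum and the observation that the last inequality above invokes subadditivity of $\phi$ at the pair $(u_r,r-u_r)$, which is automatic in every application here (there $\phi$ is a genuinely subadditive function on $\R$, such as $\bar\pi$ or $\pipwl$), rather than merely subadditivity of $\phi|_U$.
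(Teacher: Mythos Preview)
The paper does not supply its own proof of this lemma; it is quoted from Johnson and used as a black box. Your argument is the standard one and is correct for the subadditivity of $\phi_{\textrm{fill-in}}$ and for $\phi_{\textrm{fill-in}}|_U=\phi|_U$.

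Your closing remark about $\phi_{\textrm{fill-in}}\ge\phi$ is more than a technicality: under the hypotheses as literally stated (subadditivity of $\phi$ only on $U$), that inequality can fail. For instance, with $U=\Z$, $g(r)=10|r|$, $\phi(n)=|n|$ on $\Z$, and $\phi(0.9)=5$ (extended elsewhere with $\phi\le g$), one has $\phi_{\textrm{fill-in}}(0.9)\le\phi(1)+g(-0.1)=2<5=\phi(0.9)$. So the step $\phi(u_r)+\phi(r-u_r)\ge\phi(r)$ really does require subadditivity of $\phi$ at a pair outside $U\times U$, and you are right that in the paper's applications of the lemma (with $\phi=\picomb$, which is globally subadditive) the conclusion holds. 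In short: your proof is correct, and your diagnosis that the lemma as stated slightly overreaches on the $\phi_{\textrm{fill-in}}\ge\phi$ claim is accurate.
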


%#############################################################
%#############################################################
%#############################################################
%#############################################################

\vspace{-15pt}
\section{Proof of Theorem~\ref{thm:theoremZ}}
\vspace{-5pt}

	The high-level idea is to apply the 2-slope fill-in procedure to the input function $\bar{\pi}$ and then symmetrize it to produce a 2-slope function $\pi^*$ that satisfies conditions (i), (ii) and (iii) in Theorem~\ref{thm:minimal}, and hence is strongly minimal. Then employing Theorem \ref{thm:2slope} we have that $\pi^*$ is an extreme function. Moreover, we perform the 2-slope fill-in in a way that $\|\bar{\pi} - \pi^*\|_\infty \le \epsilon$, thus giving the desired result. 
	
	The main difficulty in pursuing this line of argument is that the symmetrization step needed after the 2-slope fill-in can easily destroy the desired subadditivity. Therefore, before applying the 2-slope fill-in plus symmetrization we perturb the original function $\bar{\pi}$ to ensure that in most places we have the strict inequality $\pi(x + y) < \pi(x) + \pi(y)$ (and with enough room).
	
	We start describing this perturbation procedure.  For the remainder of this section, we focus on the case where $S = b + \Z$. Also, using periodicity with respect to $\Z$, any function $\pi$ is strongly minimal for $S = b + \Z$ if and only if it is strongly minimal for $S = \bar b + \Z$, where $\bar b \equiv b \pmod 1$.  Hence, without loss of generality, we assume $b \in (0,1)$ throughout this section.

%#############################################################
%#############################################################

	\vspace{-5pt}	
	\subsection{Equality reducing perturbation}
	
%		We say that a function $\pi : \R \rightarrow \R$ satisfies the \emph{PSSO conditions (with period $d$ and center of symmetry $f$)}\mmnote{or pseudo-minimality, or some other name} if:
%		%
%		\begin{align}
%			&\text{($d$-periodic)}~~ \pi(x + w \cdot d) = \pi(x) ~~~~\forall x \in \R, \forall w \in \Z\\
%			&\text{(subadditive)}~~ \pi(x + y) \le \pi(x) + \pi(y) `~~~~\forall x,y\\
%			&\text{($f$-symmetric)}~~ \pi(x) + \pi(-x -f) = \pi(-f) ~~~~\forall x\\
%			&\text{(origin)}~~ \pi(0) = 0.
%		\end{align}
%		Notice that the $f$-symmetry condition is equivalent to the standard symmetry condition whenever $\pi(-f) = 1$. These conditions are basically the necessary and sufficient conditions for minimality in the $S = \Z$ case, but stated in slight more generality to we can also use in the case $S = \Z_+$ later.
		%{\color{red}Should $1-b$ be $b$ in this section?}
		The perturbation we consider produces a function with equalities (modulo $\Z^2$) only on the border of the unit square and on the symmetry lines $x + y = b$ and $x+y = 1 + b$.  Recall that strongly minimal functions for $S = b + \Z$ are periodic modulo $\Z$ and satisfy the symmetric condition, \ie, $\Delta \pi(x,y) = 0$ whenever $x+y \in b + \Z$. %for some $z \in \Z$.  
		Moreover, only the lines $x+y = b + z$ for $z=0,1$ intersect the cube $[0,1]^2$ since $b \in (0,1)$.
%more precisely,
		Define the sets $E_\delta = \{(x,y) : x \in [0,\delta] \cup [1-\delta, 1]\} \cup \{(x,y) : y \in [0,\delta] \cup [1-\delta, 1]\}$ for $\delta > 0$, $E_{b} = \{(x,y) \in [0,1]^2 :  b - \delta \leq x + y \leq b + \delta\}$, and $E_{1+b} = \{(x,y) \in [0,1]^2 : (1+ b) - \delta \leq x + y \leq (1+b) + \delta\}$.
		The main result of this section is the following.
		
		\begin{lemma} \label{lemma:eqReduction}
				Consider a piecewise linear function $\pi$ that is strongly minimal for $b + \Z$. Then for any $\epsilon \in (0,1)$, there is a real number $\delta > 0$ and a function $\picomb$ satisfying the following:
				\vspace{-5pt}
				\begin{enumerate}[(1)]
					\item $\picomb$ is strongly minimal for $b+\Z$.
					\item $\picomb$ is piecewise linear whose breakpoints include $\delta+\Z$ and $-\delta +\Z$. Further, $\picomb$ is linear on $[0,\delta]$ and $[-\delta, 0]$.\label{item:segment}
					\item $\|\pi - \picomb\|_{\infty} \le \epsilon$.
					\item $E(\picomb) \subseteq E_\delta \cup E_{b} \cup E_{1+b}$.
					%\item $\nabla \picomb(x) = \max_{y} \nabla \picomb(y)$ for all $x \in [0,\delta]$ and $\nabla \picomb(x) = \min_{y} \nabla \picomb(y)$ for all $x \in [1-\delta, 1]$.
					\item There exists  $\gamma > 0$ such that $\Delta\picomb(x,y) > \gamma$ for all $(x,y) \in [0,1]^2 \setminus \big( E_\delta \cup E_{b} \cup E_{1+b} \big)$.
					\label{item:delta-min}
				\end{enumerate}
		\end{lemma}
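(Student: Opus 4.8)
\textbf{Proof proposal for Lemma~\ref{lemma:eqReduction}.}

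The plan is to build $\picomb$ as a convex combination $\picomb = (1-\lambda)\pi + \lambda\,\pi_{\mathrm{sym}}$ of the input function $\pi$ with a carefully chosen second strongly minimal function $\pi_{\mathrm{sym}}$, for a small weight $\lambda = \lambda(\epsilon) > 0$. Since strongly minimal functions for $b+\Z$ are exactly the continuous functions satisfying (i)--(iii) of Theorem~\ref{thm:minimal}, and these three conditions are preserved under convex combinations (nonnegativity, $\pi(z)=0$ on $\Z$, subadditivity, and the symmetry identity $\pi(r)+\pi(b-r)=1$ are all convex/affine constraints), any such combination is automatically strongly minimal; this gives item~(1). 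Taking $\lambda$ small enough makes $\|\pi - \picomb\|_\infty = \lambda\|\pi - \pi_{\mathrm{sym}}\|_\infty \le \epsilon$, giving item~(3). The point of mixing is that $\Delta\picomb = (1-\lambda)\Delta\pi + \lambda\Delta\pi_{\mathrm{sym}}$, so the additivity domain satisfies $E(\picomb) = E(\pi)\cap E(\pi_{\mathrm{sym}})$: wherever \emph{either} function is strictly subadditive, so is the combination. Thus it suffices to exhibit \emph{one} strongly minimal piecewise linear $\pi_{\mathrm{sym}}$ whose additivity domain is contained in $E_\delta \cup E_b \cup E_{1+b}$ (for some $\delta$), and which is linear on a neighborhood $[-\delta,\delta]$ of the origin with breakpoints at $\pm\delta+\Z$. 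The natural candidate is (a suitable reparametrization/translate of) the Gomory--Johnson GMI function for $b$, or more generally any 2-slope extreme function: 2-slope strongly minimal functions are known to have additivity domains concentrated exactly on the boundary of the unit square together with the symmetry lines $x+y\in b+\Z$, which is precisely $E_\delta\cup E_b\cup E_{1+b}$ in the limit $\delta\to 0^+$. One then picks $\delta$ small enough that all of $\pi_{\mathrm{sym}}$'s (finitely many, modulo $\Z$) breakpoints other than those forced at $0$ lie outside $(-\delta,\delta)$, and so that the "thickened" strips still cover $E(\pi_{\mathrm{sym}})$; this yields items~(2) and~(4).

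For item~(5) — a \emph{uniform} strictly positive lower bound $\gamma$ on $\Delta\picomb$ off the three exceptional strips — I would argue via the polyhedral-complex structure of Lemma~\ref{lem:verts}. Let $\B$ be a common refinement of the breakpoint sets of $\pi$ and $\pi_{\mathrm{sym}}$ (including $\pm\delta+\Z$); then $\Delta\picomb$ is piecewise affine over the finite complex $\Delta\P_\B$ restricted to $[0,1]^2$. On each face $F$ of this complex, $\Delta\picomb$ is affine, hence attains its minimum over $F$ at a vertex in $\verts(F)$. The compact set $[0,1]^2\setminus(E_\delta\cup E_b\cup E_{1+b})$ meets only finitely many faces, and on each such face every vertex either lies in the interior region (where $\Delta\picomb>0$, since it is not in $E(\picomb)$) or on the strip boundary; taking $\gamma$ to be a value strictly below the minimum of $\Delta\picomb$ over this finite vertex set and applying Lemma~\ref{lem:verts} face-by-face propagates the bound to all of $[0,1]^2\setminus(E_\delta\cup E_b\cup E_{1+b})$. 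A small technical point: one wants the vertices of the relevant faces to actually lie \emph{outside} the exceptional strips, or at worst on their boundary; this is where choosing $\delta$ of the form $\tfrac{1}{q}$ (so that $\delta+\Z$ refines into the grid) and then, if necessary, shrinking the "interior'' region slightly or enlarging the strips by a grid step helps keep the combinatorics clean.

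The main obstacle I anticipate is pinning down item~(4) precisely, i.e., verifying that the chosen 2-slope reference function $\pi_{\mathrm{sym}}$ really has $E(\pi_{\mathrm{sym}})$ contained in the union of the boundary strips and the two symmetry strips, with \emph{no other} additive faces hiding inside the unit square. For the standard GMI/2-slope function this is essentially classical (its additive faces off the symmetry lines degenerate to the edges of $[0,1]^2$), but one must be careful about additivities forced purely by periodicity and by the two slope values coinciding across many intervals; a clean way is to compute $\Delta\pi_{\mathrm{sym}}$ explicitly on the (few) faces of $\Delta\P_{\{0,b,1\}+\Z}$ and check that equality holds only on the claimed lower-dimensional set, then observe that refining breakpoints to include $\pm\delta+\Z$ cannot create new additivity. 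A secondary subtlety is the linearity-on-$[-\delta,\delta]$ requirement in item~(2): since $\pi$ is already piecewise linear with $0$ a breakpoint in general, $\picomb$ will be linear on $[0,\delta]$ and on $[-\delta,0]$ separately (two possibly different slopes meeting at $0$) but need not be linear across $0$ — which is exactly what item~(2) asks for, so this is automatic once $\delta$ is below the first positive and above the first negative breakpoint of both $\pi$ and $\pi_{\mathrm{sym}}$.
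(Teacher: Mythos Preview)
Your overall strategy --- take a convex combination $\picomb = (1-\lambda)\pi + \lambda\,\pi_{\mathrm{ref}}$ with a reference strongly minimal function, so that $E(\picomb) = E(\pi)\cap E(\pi_{\mathrm{ref}})$ --- is exactly what the paper does. The gap is your choice of reference function and the claim supporting it. It is \emph{not} true that 2-slope strongly minimal functions have additivity domain contained in the boundary of $[0,1]^2$ together with the symmetry lines. Any such function is linear (with positive slope) on some initial interval $[0,u_1]$, and then the entire triangle $\{(x,y): x,y\ge 0,\ x+y\le u_1\}$ lies in $E(\pi_{\mathrm{ref}})$. For the GMI function with parameter $b$ this interval is $[0,b]$: concretely, with $b=\tfrac12$ and $x=y=\tfrac18$ one has $\Delta\pi_{\mathrm{GMI}}(\tfrac18,\tfrac18)=\tfrac14+\tfrac14-\tfrac12=0$, yet $(\tfrac18,\tfrac18)$ lies in none of $E_\delta$, $E_b$, $E_{1+b}$ once $\delta<\tfrac18$. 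So item~(4) fails for $\pi_{\mathrm{ref}}=\pi_{\mathrm{GMI}}$, and hence for $\picomb$. There is a symmetric triangle near $(1,1)$ as well.

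The paper fixes this by using a \emph{three}-slope reference function $\pi_\delta$ (Lemma~\ref{lemma:minPiAB}) that equals $\tfrac12$ on the long plateaus $[\delta,b-\delta]$ and $[b+\delta,1-\delta]$, with steep linear pieces of width $\delta$ near $0$ and $b$. The plateau is the key device: whenever $x,y$ both lie in a plateau, $\pi_\delta(x)+\pi_\delta(y)=1\ge \pi_\delta(x+y)$ with equality only when $x+y\in b+\Z$, which kills all interior additivities outside the thin strips. This choice also delivers item~(2) for free (breakpoints at $\pm\delta+\Z$ and linearity on $[0,\delta]$, $[-\delta,0]$), and item~(5) directly: Lemma~\ref{lemma:minPiAB} gives $\Delta\pi_\delta>\hat\gamma$ on the complement of the strips, and then $\Delta\picomb=(1-\epsilon)\Delta\pi+\epsilon\Delta\pi_\delta\ge \epsilon\hat\gamma$ there --- much cleaner than your face-by-face vertex argument, which as written does not control the value of $\Delta\picomb$ at vertices lying on the strip boundaries.
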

		
		The idea behind the proof of this lemma is the observation that if we have a convex combination $\pi = \alpha \pi^1 + (1-\alpha) \pi^2$ with $\alpha \in (0,1)$, then $E(\pi) \subseteq E(\pi^1) \cap E(\pi^2)$. Thus, we will find a function $\hat\pi$ with nice equalities $E(\hat\pi)$ and then set $\picomb$ as roughly $(1 - \epsilon) \pi + \epsilon \hat\pi$.
		The nice function we use is defined for any $\delta \in (0, \min\{\frac{b}{2}, \frac{1-b}{2}\})$ as follows (see Figure~\ref{fig:pi-delta} for an example):

\begin{equation}
\pi_{\delta}(r) = \begin{cases}
\frac{1}{2 \delta}r & r \in [0,\delta] + \Z, \\
\frac{1}{2} & r \in (\delta, b-\delta] + \Z, \\
1 - \frac{1}{2\delta} (b-r) & r \in (b-\delta, b]+ \Z, \\
1 + \frac{1}{2 \delta}(b -r) & r \in (b,b+\delta] + \Z, \\
\frac{1}{2} & r \in (b+\delta,1-\delta] + \Z ,\\
\frac{1}{2} + \frac{1}{2 \delta} (1-\delta-r) & r \in (1-\delta,1] + \Z.
\end{cases}
\end{equation}

%{  }

%%%%%%%%%%%%%%%%
%%% Sage Code   %%%%%%%%%%
%%%%%%%%%%%%%%%%
%import igp; from igp import*
%a  =1/10;
%b = 1/2;
%f = 2/5;
%h = piecewise_function_from_breakpoints_and_values([0, a, f -a, f, f + a, 1-a, 1], [0, b, 1-b, 1, 1-b, b,0]);
%minimality_test(h,True)
%
% Q = plot_with_colored_slopes(h)
% Q.legend(False)
% Q.show()
% Q.save("pi_a_function.pdf", figsize = 3.6)
%
%P = plot_2d_diagram(h, show_function = False)
% P.legend(False)
%P.show()
% P.save("pi_a_diagram.pdf", figsize = 3.6)

%%%%%%%%%%%%%%%%%%
%%%  End Sage Code   %%%%%%%%%%
%%%%%%%%%%%%%%%%%%%

%\footnotetext{This graphic was created using the software package~\cite{infinite-group-relaxation-code}.} 

\begin{figure}
\center

%\includegraphics[scale = 1]{pi_a_function.pdf} \ \ 
%\includegraphics[scale = 0.25]{new-plot.pdf} \ \ 
%\includegraphics[scale = 1]{../pi_a_additivity.pdf}
%\hspace{-5.8cm}

\begin{tikzpicture}[scale = 1]
        \draw (-0.2,0)--(5.2,0);
        \draw (0,-0.2)--(0,3.2);
        \draw[black, ultra thick] (0,0)-- (.5,1.5);
        \draw[black,ultra thick] (.5,1.5)--(1.5,1.5);
        \draw[black,ultra thick] (1.5, 1.5) -- (2, 3);
        \draw[black,ultra thick] (2,3) -- (2.5,1.5);
        \draw[black,ultra thick](2.5,1.5) -- (4.5,1.5);
        \draw[black,ultra thick] (4.5,1.5) -- (5,0);
        
        \foreach \x in {0,.5, 1.5,2, 2.5, 4.5, 5} {
            \draw (\x,-0.1)--(\x, 0.1);
        }
        
        \foreach \y in {1.5, 3} {
            \draw (-0.1,\y)--( 0.1, \y);
        }
        
                \node[left] at (0,1.5) {$\frac{1}{2}$};
                     \node[left] at (0,3) {$1$};
        
                        \node[below] at (.5,0) {$\frac{1}{10}$};
                 \node[below] at (1.5,0) {$\frac{3}{10}$};
                  \node[below] at (2,0) {$\frac{2}{5}$};
                   \node[below] at (2.5,0) {$\frac{1}{2}$};
                    \node[below] at (4.5,0) {$\frac{9}{10}$};
                     \node[below] at (5,0) {$1$};

\end{tikzpicture} \ \ \ \ 
\begin{tikzpicture}[scale = 1]

        \pgfmathsetmacro{\a}{5}
        \pgfmathsetmacro{\b}{0}
        \pgfmathsetmacro{\opac}{0.2}
        \pgfmathsetmacro{\opacDark}{0.5}
        \pgfmathsetmacro{\opacGreen}{1}
        
            \draw[opacity = 0](\b,\b) -- (\a,\b) -- (\a,\a) -- (\b,\a) -- cycle;       
            
            % red cells for covering x+y = 1-f and x+y = 2-f
            \path[fill = black, draw = black, opacity = \opacDark] (1.5,0) --  (2.5,0) -- (0,2.5) -- (0, 1.5) -- cycle;
            \path[fill = black, draw = black, opacity = \opacDark] (1.5,5) --  (2.5,5) -- (5,2.5) -- (5, 1.5) -- cycle;
            
            %border for x+y = 1-f and x+y = 2-f
          %      \draw[ draw = black, ultra thick] (1.5,0) --  (2.5,0) -- (0,2.5) -- (0, 1.5) -- cycle;
           % \draw[draw = black, ultra thick] (1.5,5) --  (2.5,5) -- (5,2.5) -- (5, 1.5) -- cycle;
            
            %blue cells for E_\delta
            \path[fill = black, opacity = \opac] (0.5,0) -- (0.5,.5) -- (4.5,.5) -- (4.5,0) -- cycle;
            \path[fill = black, opacity = \opac] (0,0) -- (.5,0) -- (.5,5) -- (0,5) -- cycle;
            \path[fill = black,  opacity = \opac] (5,0) -- (4.5,0) -- (4.5,5) -- (5,5) -- cycle;
              \path[fill = black, opacity = \opac] (0.5,5) -- (0.5,4.5) -- (4.5,4.5) -- (4.5,5) -- cycle;
            
            %border for E_\delta
           % \draw[draw = black, ultra thick] (0,0) -- (5,0) -- (5,5) -- (0,5)-- cycle;
           % \draw[draw = black, ultra thick] (.5,.5) -- (4.5,.5) -- (4.5, 4.5) -- (.5, 4.5) -- cycle;
            
           %E(\pi_\delta) green fill in

            \newcommand{\mycolor}{black}   
            
                \draw[draw = \mycolor, opacity = \opacGreen, ultra thick] (2,0) -- (0,2);
            \draw[draw  = \mycolor, opacity = \opacGreen, ultra thick] (2,5) -- (5,2);
            
                \draw[draw = \mycolor, opacity = \opacGreen, ultra thick] (0,0) -- (5,0) -- (5,5) -- (0,5)-- cycle;
            \draw[draw = \mycolor, opacity = \opacGreen, ultra thick] (1.5,.5) -- (.5,1.5);
            \draw[draw  = \mycolor, opacity = \opacGreen, ultra thick] (2.5,4.5) -- (4.5,2.5);
            
            \path[fill = \mycolor, opacity = \opacGreen] (0,0) -- (0.5,0) -- (0,0.5) -- cycle;
            \path[fill = \mycolor, opacity = \opacGreen] (1.5,0) -- (2,0) -- (1.5,.5) -- cycle;
            \path[fill = \mycolor, opacity = \opacGreen] (0,1.5) -- (0,2) -- (.5, 1.5) -- cycle;
            
            \path[fill = \mycolor, opacity = \opacGreen] (2,5) -- (2.5,5) -- (2.5, 4.5) -- cycle;
            \path[fill = \mycolor, opacity = \opacGreen] (5,2) -- (5,2.5) -- (4.5, 2.5) -- cycle;
            \path[fill = \mycolor, opacity = \opacGreen] (5,5) -- (4.5, 5) -- (5,4.5) -- cycle;

            %\Delta P complex
        \foreach \x in {0,.5, 1.5,2, 2.5, 4.5, 5} {
            \draw[dashed] (\x,0)--(0,\x);
             \draw[dashed] (\x,0)--(\x,5);
            \draw[dashed] (0,\x)--(5,\x);  
        }
        
            \foreach \x in {0.5, 1.5,2, 2.5, 4.5, 5} {
             \draw[dashed] (\x,5)--(5,\x);
        }

                \node[left] at (0,.5) {$\frac{1}{10}$};
                 \node[left] at (0,1.5) {$\frac{3}{10}$};
                  \node[left] at (0,2) {$\frac{2}{5}$};
                   \node[left] at (0,2.5) {$\frac{1}{2}$};
                    \node[left] at (0,4.5) {$\frac{9}{10}$};
                     \node[left] at (0,5) {$1$};
                     
                        \node[below] at (.5,0) {$\frac{1}{10}$};
                 \node[below] at (1.5,0) {$\frac{3}{10}$};
                  \node[below] at (2,0) {$\frac{2}{5}$};
                   \node[below] at (2.5,0) {$\frac{1}{2}$};
                    \node[below] at (4.5,0) {$\frac{9}{10}$};
                     \node[below] at (5,0) {$1$};         
\end{tikzpicture}

\vspace{-8pt}\caption{On the left is a plot of $\pi_\delta\colon \R \to [0,1]$ for $\delta = \frac{1}{10}$ and $b = \frac{2}{5}$.  This function is periodic modulo $\Z$, so we only display the domain $[0,1]$.  On the right we have drawn the complex $\Delta \P_{\B}$ in dashed lines on the $[0,1]^2$ domain.    The function $\Delta \pi_\delta$ is affine on each cell of $\Delta \P_{\B}$.   The cells of $\Delta \P_{\B}$ filled in black are those contained in the set $E(\pi_\delta)$.  Since $\pi_\delta$ is periodic, $\Delta \pi_\delta$ (and hence $E(\pi_\delta)$) is periodic modulo $\Z^2$.  Covering the set $E(\pi_\delta)$ in gray are the sets $E_\delta, E_{b}$, and $E_{1+b}$.  
  The set $E_\delta$ around the boundary of the box is shaded in lighter gray, while the diagonal strips $E_{b}$ and $E_{1+b}$ are shaded in darker gray.  Notice that $E(\pi_\delta) \subseteq E_\delta \cup E_{b} \cup E_{1+b}$.  In fact, the remaining region $[0,1]^2 \setminus (E_\delta \cup E_{b} \cup E_{1+b})$, colored white,  does not intersect $E(\pi_\delta)$.  Hence $\Delta \pi_\delta > \gamma >0$ on this remaining region.}
%
%%%%%%%%%%%%%%%%%%%%%%%%%%%%%%%%%%%%%%%%%%%
%%%%%%%%%  Color version of fig  %%%%%%%%%%
%%%%%%%%%%%%%%%%%%%%%%%%%%%%%%%%%%%%%%%%%%%
\remove{
\begin{tikzpicture}[scale = 1]
        \draw (-0.2,0)--(5.2,0);
        \draw (0,-0.2)--(0,3.2);
        \draw[blue, ultra thick] (0,0)-- (.5,1.5);
        \draw[blue,ultra thick] (.5,1.5)--(1.5,1.5);
        \draw[blue,ultra thick] (1.5, 1.5) -- (2, 3);
        \draw[blue,ultra thick] (2,3) -- (2.5,1.5);
        \draw[blue,ultra thick](2.5,1.5) -- (4.5,1.5);
        \draw[blue,ultra thick] (4.5,1.5) -- (5,0);
        
        \foreach \x in {0,.5, 1.5,2, 2.5, 4.5, 5} {
            \draw (\x,-0.1)--(\x, 0.1);
        }
        
        \foreach \y in {1.5, 3} {
            \draw (-0.1,\y)--( 0.1, \y);
        }
        
                \node[left] at (0,1.5) {$\frac{1}{2}$};
                     \node[left] at (0,3) {$1$};
        
                        \node[below] at (.5,0) {$\frac{1}{10}$};
                 \node[below] at (1.5,0) {$\frac{3}{10}$};
                  \node[below] at (2,0) {$\frac{2}{5}$};
                   \node[below] at (2.5,0) {$\frac{1}{2}$};
                    \node[below] at (4.5,0) {$\frac{9}{10}$};
                     \node[below] at (5,0) {$1$};

\end{tikzpicture} \ \ \ \ 
\begin{tikzpicture}[scale = 1]

        \pgfmathsetmacro{\a}{5}
        \pgfmathsetmacro{\b}{0}
        \pgfmathsetmacro{\opac}{0.2}
        \pgfmathsetmacro{\opacGreen}{1}
        
            \draw[opacity = 0](\b,\b) -- (\a,\b) -- (\a,\a) -- (\b,\a) -- cycle;       
            
            % red cells for covering x+y = 1-f and x+y = 2-f
            \path[fill = red, draw = black, opacity = \opac] (1.5,0) --  (2.5,0) -- (0,2.5) -- (0, 1.5) -- cycle;
            \path[fill = red, draw = black, opacity = \opac] (1.5,5) --  (2.5,5) -- (5,2.5) -- (5, 1.5) -- cycle;
            
            %border for x+y = 1-f and x+y = 2-f
          %      \draw[ draw = black, ultra thick] (1.5,0) --  (2.5,0) -- (0,2.5) -- (0, 1.5) -- cycle;
           % \draw[draw = black, ultra thick] (1.5,5) --  (2.5,5) -- (5,2.5) -- (5, 1.5) -- cycle;
            
            %blue cells for E_\delta
            \path[fill = blue, opacity = \opac] (0.5,0) -- (0.5,.5) -- (4.5,.5) -- (4.5,0) -- cycle;
            \path[fill = blue, opacity = \opac] (0,0) -- (.5,0) -- (.5,5) -- (0,5) -- cycle;
            \path[fill = blue,  opacity = \opac] (5,0) -- (4.5,0) -- (4.5,5) -- (5,5) -- cycle;
              \path[fill = blue, opacity = \opac] (0.5,5) -- (0.5,4.5) -- (4.5,4.5) -- (4.5,5) -- cycle;
            
            %border for E_\delta
           % \draw[draw = black, ultra thick] (0,0) -- (5,0) -- (5,5) -- (0,5)-- cycle;
           % \draw[draw = black, ultra thick] (.5,.5) -- (4.5,.5) -- (4.5, 4.5) -- (.5, 4.5) -- cycle;
            
           %E(\pi_\delta) green fill in

            \newcommand{\mycolor}{black}   
            
                \draw[draw = \mycolor, opacity = \opacGreen, ultra thick] (2,0) -- (0,2);
            \draw[draw  = \mycolor, opacity = \opacGreen, ultra thick] (2,5) -- (5,2);
            
                \draw[draw = \mycolor, opacity = \opacGreen, ultra thick] (0,0) -- (5,0) -- (5,5) -- (0,5)-- cycle;
            \draw[draw = \mycolor, opacity = \opacGreen, ultra thick] (1.5,.5) -- (.5,1.5);
            \draw[draw  = \mycolor, opacity = \opacGreen, ultra thick] (2.5,4.5) -- (4.5,2.5);
            
            \path[fill = \mycolor, opacity = \opacGreen] (0,0) -- (0.5,0) -- (0,0.5) -- cycle;
            \path[fill = \mycolor, opacity = \opacGreen] (1.5,0) -- (2,0) -- (1.5,.5) -- cycle;
            \path[fill = \mycolor, opacity = \opacGreen] (0,1.5) -- (0,2) -- (.5, 1.5) -- cycle;
            
            \path[fill = \mycolor, opacity = \opacGreen] (2,5) -- (2.5,5) -- (2.5, 4.5) -- cycle;
            \path[fill = \mycolor, opacity = \opacGreen] (5,2) -- (5,2.5) -- (4.5, 2.5) -- cycle;
            \path[fill = \mycolor, opacity = \opacGreen] (5,5) -- (4.5, 5) -- (5,4.5) -- cycle;

            %\Delta P complex
        \foreach \x in {0,.5, 1.5,2, 2.5, 4.5, 5} {
            \draw[dashed] (\x,0)--(0,\x);
             \draw[dashed] (\x,0)--(\x,5);
            \draw[dashed] (0,\x)--(5,\x);  
        }
        
            \foreach \x in {0.5, 1.5,2, 2.5, 4.5, 5} {
             \draw[dashed] (\x,5)--(5,\x);
        }

                \node[left] at (0,.5) {$\frac{1}{10}$};
                 \node[left] at (0,1.5) {$\frac{3}{10}$};
                  \node[left] at (0,2) {$\frac{2}{5}$};
                   \node[left] at (0,2.5) {$\frac{1}{2}$};
                    \node[left] at (0,4.5) {$\frac{9}{10}$};
                     \node[left] at (0,5) {$1$};
                     
                        \node[below] at (.5,0) {$\frac{1}{10}$};
                 \node[below] at (1.5,0) {$\frac{3}{10}$};
                  \node[below] at (2,0) {$\frac{2}{5}$};
                   \node[below] at (2.5,0) {$\frac{1}{2}$};
                    \node[below] at (4.5,0) {$\frac{9}{10}$};
                     \node[below] at (5,0) {$1$};         
\end{tikzpicture}

\vspace{-8pt}\caption{On the left is a plot of $\pi_\delta\colon \R \to [0,1]$ for $\delta = \frac{1}{10}$ and $b = \frac{2}{5}$.  This function is periodic modulo $\Z$, so we only display the domain $[0,1]$.  On the right we have drawn the complex $\Delta \P_{\B}$ in dashed lines on the $[0,1]^2$ domain.    The function $\Delta \pi_\delta$ is affine on each cell of $\Delta \P_{\B}$.   The cells of $\Delta \P_{\B}$ filled in black are those contained in the set $E(\pi_\delta)$.  Since $\pi_\delta$ is periodic, $\Delta \pi_\delta$ (and hence $E(\pi_\delta)$) is periodic modulo $\Z^2$.  Covering the set $E(\pi_\delta)$ in a light shading are the sets $E_\delta, E_{b}$, and $E_{1+b}$.  
  The set $E_\delta$ around the boundary of the box is shaded in light blue, while the diagonal strips $E_{b}$ and $E_{1+b}$ are shaded in light red.  Notice that $E(\pi_\delta) \subseteq E_\delta \cup E_{b} \cup E_{1+b}$.  In fact, the remaining region $[0,1]^2 \setminus (E_\delta \cup E_{b} \cup E_{1+b})$, colored white,  does not intersect $E(\pi_\delta)$.  Hence $\Delta \pi_\delta > \gamma >0$ on this remaining region.}
}
\label{fig:pi-delta}
\label{fig:subadditivity-cases} 
\end{figure}

	The following lemma states the key properties of $\pi_\delta$; its proof, presented in 
\ifipco
	the full version of the paper,
\else
	Appendix \ref{app:minPiAB},
\fi
 uses the characterization of strong minimality from Theorem \ref{thm:minimal} and requires a case analysis based on the breakpoints of $\pi_\delta$.

	\begin{lemma} \label{lemma:minPiAB}
		For all $\delta \in (0, \min\{\frac{b}{2}, \frac{1-b}{2}\}) $, the function $\pi_{\delta}$ is strongly minimal for $b + \Z$.   
		Furthermore, we have $E(\pi_{\delta}) \subseteq E_{\delta} \cup E_{b} \cup E_{1+b}$ and there exists $\gamma>0$ such that $\Delta \pi_\delta(x,y) > \gamma$ for all $(x,y) \in [0,1]^2 \setminus (E_{\delta} \cup E_{b} \cup E_{1+b})$.
	\end{lemma}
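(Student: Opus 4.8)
The plan is to verify the three conditions of the Gomory--Johnson characterization (Theorem~\ref{thm:minimal}) for $\pi_\delta$, and then to extract the location of $E(\pi_\delta)$ from a finite evaluation of the slack $\Delta\pi_\delta$ at the vertices of the complex $\Delta\P_\B$. The assumption $\delta\in(0,\min\{b/2,(1-b)/2\})$ enters only to pin down the ordering of the breakpoints inside one period, namely $0<\delta<b-\delta<b<b+\delta<1-\delta<1$; in particular $\pi_\delta$ is then a well-defined piecewise linear function, periodic modulo $\Z$, with breakpoint set $\B=\{0,\delta,b-\delta,b,b+\delta,1-\delta\}+\Z$. Condition (i) is read off the six pieces: on each of them $\pi_\delta$ takes values in $[0,1]$, and $\pi_\delta(0)=0$ while $\pi_\delta(1)=\tfrac12+\tfrac1{2\delta}(1-\delta-1)=0$, so $\pi_\delta\ge 0$ and vanishes on $\Z$. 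For condition (iii), the symmetry $\pi_\delta(r)+\pi_\delta(b-r)=1$, periodicity reduces the check to $r$ in the fundamental domain $[0,1)$; the involution $r\mapsto b-r$ carries each of the six defining subintervals onto a subinterval of another (modulo $\Z$), and a one-line computation in each case produces the sum $1$.

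The substantive part is condition (ii), subadditivity, which I would prove together with the structural claim about $E(\pi_\delta)$. Since $\pi_\delta$ is piecewise linear and periodic modulo $\Z$ with breakpoints $\B$, Lemma~\ref{lem:verts} reduces everything to the vertices of $\Delta\P_\B$, i.e.\ to points $(x,y)$ at which two of $x,y,x+y$ lie in $\B$; by $\Z^2$-periodicity of $\Delta\pi_\delta$ only finitely many such vertices matter, up to translation. I would organize them by which two of $x,y,x+y$ hit which breakpoints and, for each, evaluate $\Delta\pi_\delta$ directly from the definition of $\pi_\delta$. A few combinations split into two subcases according to orderings not fixed by the hypothesis on $\delta$ (for instance whether $2\delta\le b-\delta$, and whether $b+2\delta\le 1-\delta$), so the bookkeeping is somewhat long, but each case reduces to an elementary inequality. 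The outcome is: $\Delta\pi_\delta\ge 0$ at every vertex of $\Delta\P_\B$; $\Delta\pi_\delta$ vanishes at a vertex only when that vertex lies on a lattice line $x\in\Z$ or $y\in\Z$, or on a symmetry line $x+y\in b+\Z$ (where $\Delta\pi_\delta\equiv 0$ by periodicity and by the symmetry identity); and, because around each integer the nearest breakpoints are at distance $\delta$ and around each value of $b+\Z$ the nearest breakpoints are $b-\delta$ and $b+\delta$, each such zero-slack vertex belongs only to cells of $\Delta\P_\B$ that are contained in $E_\delta\cup E_b\cup E_{1+b}$; consequently $\Delta\pi_\delta>0$ at every vertex of every cell of $\Delta\P_\B$ that is \emph{not} contained in $E_\delta\cup E_b\cup E_{1+b}$. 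The first statement, via the ``in particular'' clause of Lemma~\ref{lem:verts}, gives subadditivity, and then Theorem~\ref{thm:minimal} shows $\pi_\delta$ is strongly minimal for $b+\Z$. This vertex case analysis is the only genuine obstacle; the reductions around it are short.

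For the remaining two assertions, let $\mathcal{F}$ be the collection of all cells of $\Delta\P_\B$ that are not contained in $E_\delta\cup E_b\cup E_{1+b}$, and set $\gamma=\inf_{v\in\verts(\mathcal{F})}\Delta\pi_\delta(v)$; this is a minimum over finitely many values by $\Z^2$-periodicity, and it is positive by the strict inequality just established. By Lemma~\ref{lem:verts}, $\Delta\pi_\delta\ge\gamma$ on $\bigcup_{F\in\mathcal{F}}F$. Now any $p\in[0,1]^2\setminus(E_\delta\cup E_b\cup E_{1+b})$ lies in some cell $F_0$ of $\Delta\P_\B$, and $F_0$ cannot be contained in $E_\delta\cup E_b\cup E_{1+b}$ (it contains $p$), so $F_0\in\mathcal{F}$ and $\Delta\pi_\delta(p)\ge\gamma>0$. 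Hence $\Delta\pi_\delta\ge\gamma$ on $[0,1]^2\setminus(E_\delta\cup E_b\cup E_{1+b})$ (replace $\gamma$ by $\gamma/2$ for a strict inequality), which is the claimed lower bound; in particular $E(\pi_\delta)$ meets $[0,1]^2$ only inside $E_\delta\cup E_b\cup E_{1+b}$, and periodicity of $\Delta\pi_\delta$ modulo $\Z^2$ upgrades this to $E(\pi_\delta)\subseteq E_\delta\cup E_b\cup E_{1+b}$, completing the proof.
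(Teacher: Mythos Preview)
Your proposal is correct and takes essentially the same approach as the paper: verify Theorem~\ref{thm:minimal}, reduce subadditivity and the location of $E(\pi_\delta)$ to a finite vertex check via Lemma~\ref{lem:verts}, and extract $\gamma$ from finiteness. The only notable difference is in organizing the vertex case analysis: rather than enumerating which breakpoints are hit (with subcases depending on orderings such as whether $2\delta\le b-\delta$), the paper exploits that $\pi_\delta(u)\in\{0,\tfrac12,1\}$ for every $u\in\B$ and splits on these three values, which collapses the check into a handful of short cases independent of those orderings.
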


%	\begin{lemma}
%		We have $E(\pi_{a,b}) \subseteq E_{\max\{a,b\}} \cup E_{1-f} \cup E_{2-f}$.
%	\end{lemma}

%	\begin{proof}
%		\tred{TODO}
%	\end{proof}

%	\begin{proof}[of Lemma \ref{lemma:eqReduction}] 
	\paragraph{Proof of Lemma \ref{lemma:eqReduction}.}
		Consider the breakpoints of $\pi$ in the open interval $(0,1)$, let $u_{\min}$ and $u_{\max}$ be respectively the smallest and the largest of these breakpoints. Choose $\delta > 0$ sufficiently small -- more precisely, $\delta < \min\{u_{\min}, 1 - u_{\max},  \frac{b}{2}, \frac{1-b}{2}\}$. 
		
		%Now consider the function $\hat\pi = \pi_{\delta}$. %Using the periodicity of $\pi$ and $\hat\pi$, let $\mu = \max_{x \in \R} |\pi(x) - \hat\pi(x)| = \max_{x \in [0,1]} |\pi(x) - \hat\pi(x)|$; notice $\mu$ is finite, since the functions $\pi(x)$ and $\hat\pi(x)$ are continuous and the domain $[0,1]$ is compact. Then let $\hat\epsilon = \frac{\epsilon}{\max\{\mu, 1\}}$ and consider the function $\picomb = (1-\hat\epsilon) \pi + \hat\epsilon \hat\pi$. It is easy to see that $\picomb$ is minimal, and moreover $\|\picomb - \pi\|_{\infty} \le \epsilon$:
		%
		By Lemma~\ref{lemma:minPiAB}, $\pi_\delta$ is strongly minimal, and $\pi$ is strongly minimal by assumption. Since the conditions (i), (ii) and (iii) in Theorem~\ref{thm:minimal} are maintained under taking convex combinations, the function $\picomb = (1-\epsilon) \pi + \epsilon \pi_\delta$ is also strongly minimal. Thus, condition (1) is satisfied. By the choice of $\delta$ and the fact that $\delta+\Z$ and $-\delta+\Z$ are included in the breakpoints of $\pi_\delta$, condition (2) is also satisfied. Moreover, 
		\begin{align*}
			|\picomb(x) - \pi(x)| &= |(1 - \epsilon) \pi(x) + \epsilon \pi_\delta(x) - \pi(x)| = |-\epsilon \pi(x) + \epsilon \pi_\delta(x)| \le \epsilon,
		\end{align*}
		where the last inequality follows from the fact that $0 \leq \pi(x), \pi_\delta(x) \leq 1$ for all $x$, since both functions are strongly minimal. Thus, condition (3) is satisfied. Finally, by Lemma~\ref{lemma:minPiAB}, there exists a $\hat \gamma > 0$ such that $\Delta\pi_\delta (x,y) > \hat \gamma$ for all $(x,y) \in [0,1]^2 \setminus (E_\delta \cup E_{b} \cup E_{1+b})$.  Since $\Delta \pi \geq 0$, it follows that $\Delta \picomb(x,y) = (1-\epsilon)\Delta\pi(x,y) + \epsilon\Delta\pi_\delta(x,y)\geq \epsilon \Delta \pi_\delta(x,y) > \hat \gamma \epsilon$ for all $(x,y) \in [0,1]^2 \setminus (E_\delta \cup E_{b} \cup E_{1+b})$.  Taking $\gamma = \hat \gamma \epsilon$ completes the proof of conditions (4) and (5). \mqed
		%Since $\picomb$ is minimal, its largest slope is at $0^+$ and its smallest slope is at $1^-$. Moreover, notice that the first and last breakpoints of $\picomb$ in the open interval $(0,1)$ is $\delta$ and $1-\delta$ respectively. Then property (4) in the lemma holds. This concludes the proof.  
%	\end{proof}
	
%#############################################################
%#############################################################

	\subsection{Symmetric 2-slope fill-in}

	We now show that we can apply the 2-slope fill-in plus a symmetrization procedure to the function $\picomb$ to transform it into a strongly minimal 2-slope function (and hence extreme) while only making small changes to the function values. 
	
	\begin{lemma} \label{lemma:symFillIn}
		Let $\epsilon > 0$ and let $\picomb$ be any function that satisfies the output conditions of Lemma \ref{lemma:eqReduction} (for some $\delta, \gamma >0$) whose breakpoints are rational. There exists a strongly minimal 2-slope piecewise linear function $\pisym$ such that $\|\picomb - \pisym\|_\infty \le \epsilon$. %where %		Consider a function $\picomb$ that satisfies the output conditions of Lemma \ref{lemma:eqReduction} for some $\delta, \gamma >0$ and whose breakpoints are rational. Then for every $\epsilon > 0$ there is a strongly minimal, 2-slope piecewise linear function $\pisym$ such that $\|\picomb - \pisym\|_\infty \le \epsilon$.	
	\end{lemma}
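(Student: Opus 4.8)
The plan is to build $\pisym$ in two stages: first apply a $2$-slope fill-in to obtain a subadditive $2$-slope function that agrees with $\picomb$ on a fine rational grid, then symmetrize by reflecting across the symmetry line $x+y=b$. The delicate point is that reflecting can ruin subadditivity, and the extra structure of $\picomb$ from Lemma~\ref{lemma:eqReduction} (linearity near $\Z$ and near $b+\Z$, strict subadditivity with slack $\gamma$ elsewhere) is exactly what makes the second stage go through.

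\emph{Set-up.} Since $\picomb$ has rational breakpoints and $b$ is rational, I would fix $q\in\Z_+$, thought of as large, so that $U:=\tfrac1q\Z$ contains all breakpoints of $\picomb$ together with $b$, $b/2$, $(b+1)/2$, $\delta$, $b\pm\delta$. Let $\sigma^+>0>\sigma^-$ denote the slopes of $\picomb$ on $[0,\delta]$ and on $[-\delta,0]$ (they are strict because $\picomb$ is strongly minimal, hence nonnegative, periodic and not identically zero), and set $g(r)=\max\{\sigma^+r,\sigma^-r\}$, a sublinear $2$-slope function with $g\ge0$. Writing any $r\ge0$ as $k\delta$ plus a remainder in $[0,\delta)$ and using subadditivity of $\picomb$ gives $\picomb(r)\le\sigma^+r$, and symmetrically $\picomb(r)\le\sigma^-r$ for $r\le0$, so $\picomb\le g$. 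Hence Lemma~\ref{lem:fill-in} applies with $\phi=\picomb$: the fill-in $\pi'(r)=\min_{u\in U}\{\picomb(u)+g(r-u)\}$ is subadditive, periodic modulo $\Z$, has slopes in $\{\sigma^+,\sigma^-\}$, satisfies $\pi'\ge\picomb\ge0$ and $\pi'|_U=\picomb|_U$, and (bounding the $\min$ by its values at the two endpoints of each grid cell) satisfies $\|\pi'-\picomb\|_\infty\le\eta(q)$ with $\eta(q)\to0$ as $q\to\infty$. Evaluating the $\min$ at $u=0$ resp.\ at $u=b\pm\delta$ and invoking condition~(2) together with the symmetry of $\picomb$ shows $\pi'=\picomb$ on $[-\delta,\delta]+\Z$ and on $[b-\delta,b+\delta]+\Z$ (note $\picomb$ is linear on $[b-\delta,b]$ and $[b,b+\delta]$ with slopes $\sigma^+,\sigma^-$, by symmetry with $[0,\delta],[-\delta,0]$). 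Finally, since $\picomb$ is piecewise linear and $\picomb<1$ off $b+\Z$, there is $c>0$ with $\picomb\le1-c$ whenever $\dist(r,b+\Z)\ge\delta$; combined with $\pi'=\picomb$ near $b+\Z$ this gives $\pi'\le1$ once $\eta(q)<c$.

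\emph{Symmetrization.} I would then define $\pisym$ to be periodic modulo $\Z$, equal to $\pi'$ on $[0,b/2]\cup[b,(b+1)/2]$ and equal to $1-\pi'(b-r)$ on the complementary set $[b/2,b]\cup[(b+1)/2,1]$. The two branches match at the seams $b/2,\,b,\,(b+1)/2,\,1$, because there $\pi'=\picomb$ and $\picomb(0)=0$, $\picomb(b/2)=\picomb((b+1)/2)=\tfrac12$, $\picomb(b)=1$ by symmetry of $\picomb$ (all these points lie in $U$), so $\pisym$ is a well-defined continuous piecewise linear function. One checks immediately that $\pisym(r)+\pisym(b-r)=1$ for all $r$; that $\pisym$ is $2$-slope, since the reflected branch has derivative $\tfrac{d}{dr}[1-\pi'(b-r)]=(\pi')'(b-r)\in\{\sigma^+,\sigma^-\}$; that $0\le\pisym\le1$ from $0\le\pi'\le1$, so in particular $\pisym(z)=0$ for $z\in\Z$; and, using symmetry of $\picomb$, that $\|\pisym-\picomb\|_\infty\le\eta(q)$ (on the reflected branch $|1-\pi'(b-r)-\picomb(r)|=|\picomb(b-r)-\pi'(b-r)|$). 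By construction $\pisym=\picomb$ on $[-\delta,\delta]+\Z$ and on $[b-\delta,b+\delta]+\Z$, so $\pisym$ is linear on each of these intervals with slopes among $\sigma^+,\sigma^-$.

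\emph{Subadditivity, the crux.} It remains to show $\Delta\pisym\ge0$; by periodicity it suffices to do this on $[0,1)^2$, split into three regions. On $[0,1]^2\setminus(E_\delta\cup E_b\cup E_{1+b})$, condition~(5) gives $\Delta\picomb>\gamma$ while $\|\pisym-\picomb\|_\infty\le\eta$ gives $|\Delta\pisym-\Delta\picomb|\le3\eta$, so $\Delta\pisym>0$ once $3\eta\le\gamma$. On $E_b$ (and, after a shift by $\Z$, identically on $E_{1+b}$), put $t=x+y-b\in[-\delta,\delta]$; using $\pisym(y)=1-\pisym(b-y)=1-\pisym(x-t)$ one obtains $\Delta\pisym(x,y)=[\pisym(x)-\pisym(x-t)]+[1-\pisym(b+t)]$, where the second bracket equals $\sigma^+|t|$ if $t<0$ and $|\sigma^-|\,t$ if $t>0$ (since $\pisym=\picomb$ is linear near $b$ with value $1$ at $b$), whereas the first bracket is $\ge\sigma^-t$ if $t>0$ and $\ge\sigma^+t$ if $t<0$ (since $\pisym$ is $2$-slope); the two brackets cancel and $\Delta\pisym(x,y)\ge0$. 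On $E_\delta$, say $x\in[0,\delta]$ (the other pieces of $E_\delta$ being symmetric, using periodicity and the linearity of $\pisym$ on $[-\delta,0]$), $\pisym(x)=\sigma^+x$ and $\pisym(x+y)-\pisym(y)\le\sigma^+x$ because $\sigma^+$ is the maximal slope, so $\Delta\pisym(x,y)=\sigma^+x-[\pisym(x+y)-\pisym(y)]\ge0$. Thus $\pisym$ is subadditive; it therefore satisfies (i)--(iii) of Theorem~\ref{thm:minimal}, is strongly minimal, is $2$-slope piecewise linear, and has $\|\picomb-\pisym\|_\infty\le\eta(q)\le\epsilon$ once $q$ is large. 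The main obstacle is precisely the $E_b$/$E_{1+b}$ step: the crude perturbation bound from the first region fails there because $\Delta\picomb$ vanishes on the symmetry lines, and it is to make this cancellation exact that Lemma~\ref{lemma:eqReduction} forces $\picomb$ to be linear near $\Z$ and near $b+\Z$ while being strictly subadditive everywhere else.
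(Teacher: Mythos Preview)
Your argument is correct and follows essentially the same approach as the paper: fill-in with respect to a fine grid using the sublinear function built from the two slopes at the origin, symmetrize across $b$, then verify subadditivity by the same three-region case split (strict slack away from $E_\delta\cup E_b\cup E_{1+b}$, a direct slope comparison on $E_\delta$, and a reduction via the symmetry relation on $E_b\cup E_{1+b}$). The only cosmetic differences are that you use the four-piece partition $[0,b/2]\cup[b,(b+1)/2]$ versus the paper's three-piece $[0,b/2]\cup[(1+b)/2,1]$ for the symmetrization, and you handle $E_b$ by a direct two-bracket cancellation rather than by reducing to the $E_\delta$ case as the paper does; both variants work.
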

	
	\begin{proof}		
		%Let $\picomb$ be given to satisfy the assumptions of Lemma~\ref{lemma:eqReduction}.  
		%Using the periodicity of strongly minimal functions, we focus on the [0,1] interval. 
		
	By periodicity, we focus on the [0,1] interval. Without loss of generality, we assume that $\epsilon < \tfrac{\gamma}{3}$ where $\gamma$ is given in Lemma~\ref{lemma:eqReduction}\eqref{item:delta-min}. Let $s_+$ and $s_-$ be two slopes of the piecewise linear function $\picomb$ coming from the origin, i.e., let 
$s_+ = \lim_{h \to 0^+} \frac{\picomb(h)}{h}$, $s_- = \lim_{h \to 0^-} \frac{\picomb(h)}{h}$.  Since $\picomb$ is nonnegative,  it follows that $s_+\geq 0$, $s_- \leq 0$. The function $g(r) := \max(s_+ \cdot r, s_-\cdot r),$ is easily seen to be sublinear, and subadditivity of $\picomb$ implies $\picomb \leq g$.

Let $q \in \Z^+$ such that $\tfrac{1}{q} \Z$ such that the breakpoints $\B$ of $\picomb$ and $\frac{b}{2}$ are contained in $\tfrac{1}{q} \Z$ and such that $\frac{1}{q} \max\{s_+, |s_-|\} < \tfrac{\epsilon}{2}$.  Since $\picomb \leq g$, by Lemma~\ref{lem:fill-in}, the fill-in function $\pifill$ of $\picomb$, with respect to $\tfrac{1}{q} \Z$ and $g$, is subadditive.  Unfortunately, $\pifill$ does not necessarily satisfy the symmetry condition and, therefore, is not necessarily a strongly minimal function.  Hence, we further define

$$
\pisym(r) = \begin{cases}
\pifill(r) & r \in [0,\frac{b}{2}]\cup[\frac{1+b}{2},1],\\
1 - \pifill(b-r) & r \in [\frac{b}{2}, \frac{1+b}{2}]
\end{cases}
$$
% \tred{Here we are assuming that $(1-f) \leq 1/2$.}
 
 In the definition of $\pisym$, we have enforced the symmetry condition, possibly sacrificing the subadditivity of the function.  We will show that, given the parameters used in the construction, $\pisym$ is strongly minimal and actually approximates $\picomb$ to the desired precision.

 %First we consider $\pifill$.  Since $\picomb$ is subadditive, it follow that $\picomb(x+y)\leq \picomb(x) + s_+ y$ for all $y \geq 0$ and $\picomb(x+y) \leq \pi(x) + s_- y$ for all $y \leq 0$.  Hence,  $\picomb(x) + \phi(r-x) \geq \picomb(r)$ for all $x \in \R$.
 
By Lemma~\ref{lem:fill-in}, $\pifill \geq \picomb$ and $\picomb(u) = \pifill(u)$ for all $u \in \tfrac{1}{q} \Z$.   Since $\picomb$ is period modulo $\Z$,  the function $\pifill$ inherits this property. Moreover, restricted to $[0,1]$, $\pifill$ is the pointwise minimum of a finite collection of piecewise linear functions and therefore, $\pifill$ is also piecewise linear.  Furthermore, the maximum slope in absolute value of $\pifill$ is $s = \max\{ s_+, |s_-|\}$.  Therefore $s$ is also a bound on the maximum slope in absolute value for $\picomb$.  Hence, 
 $$
 |\pifill(r) - \picomb(r)| \leq |\pifill(u) - \picomb(u)| + 2 s |u-r| \leq \epsilon,
  $$
 where $u \in \tfrac{1}{q} \Z$ is the closest point in $\frac1q\Z$ to $r$.  
Thus, we have established that $\| \pifill - \picomb\|_\infty \leq \epsilon$.  
 Observe that $|\pisym(r) - \picomb(r)|=|\pifill(r) - \picomb(r)| $ for all $r \in [0,\frac{b}{2}]\cup[\frac{1+b}{2},1]$, and $|\pisym(r) - \picomb(r)| = |\pifill(b-r) - \picomb(b-r)|$ for all $r \in [\frac{b}{2}, \frac{1+b}{2}]$ because of the symmetry of $\picomb$.  Therefore we also have that $\| \pisym - \picomb\|_\infty \leq \epsilon$.
% $$
% \| \pisym - \picomb\|_\infty \leq \epsilon.
% $$
 
 Next, observe that $\pisym$ has the same slopes as $\pifill$, and therefore is a 2-slope piecewise linear function.
 
 Finally, we establish that $\pisym$ is a strongly minimal function.   Since it is clear that $\pisym(0) = 0$, and $\pisym$ satisfies the symmetry condition, by Theorem~\ref{thm:minimal}, we only need to show that $\pisym$ satisfies the subadditivity condition $\pisym(x+y) \le \pisym(x) + \pisym(y)$; equivalently, $\Delta\pisym(x,y) \geq 0$. 
 This is established by the following case analysis for each $(x,y) \in [0,1]^2$.
 
% This is provided by the following claim, whose proof has a case analysis based the location of the pair $(x,y)$ relative to the ``equality'' sets $E_\delta$, $E_b$, and $E_{1+b}$ and is presented in Appendix \ref{app:claimSymFillIn}.
% 
% \begin{mclaim} \label{claim:symFillIn}
% 	The function $\pisym$ is subadditive.
% \end{mclaim}
 
%\begin{enumerate}[{Case} 1. ]
 \textbf{Case 1.} 
%\item 
Suppose  $(x,y) \in [0,1]^2 \setminus \big( E_\delta \cup E_{b} \cup E_{1+b}\big)$.\\
By Lemma~\ref{lemma:eqReduction}\eqref{item:delta-min}, $\Delta \picomb(x,y) > \gamma > 0$.  Since $\|\pisym - \picomb\|_\infty \leq \epsilon$, it follows that 
$$
\Delta \pisym(x,y) \geq \Delta\picomb(x,y) - 3 \|\pisym - \picomb\|_\infty \geq 0.
$$

\textbf{Case 2.}  %\item 
  Suppose $(x,y) \in E_\delta$.\\
By Lemma~\ref{lemma:eqReduction}\eqref{item:segment}, the slope of $\picomb$ on the interval $[0,\delta]$ is $s_+$, while the slope on the interval $[1-\delta, 1]$ is $s_-$.  We claim that $\picomb = \pifill$ on the intervals $[0,\delta]$ and $[1-\delta, \delta]$.  To see this, consider any two consecutive points $u_1, u_2 \in [0,\delta] \cap \tfrac{1}{q} \Z$.  
For any $r \in [u_1, u_2]$, we have 
\begin{align*}
\picomb(r) &= \picomb(u_1) + s_+ (r - u_1) = \picomb(u_1) + g(r - u_1) \\
&\geq \min_{ u \in U} \picomb(u) + g(r - u) = \pifill(r) \geq \picomb(r).
\end{align*}
The first equality comes from the second part of Lemma~\ref{lemma:eqReduction}\eqref{item:segment}. The last inequality comes from Lemma~\ref{lem:fill-in}. Since this holds for any points $u_1, u_2 \in \tfrac{1}{q} \Z \cap [0,\delta]$ and $0, \delta \in \tfrac{1}{q}\Z$ by Lemma~\ref{lemma:eqReduction}\eqref{item:segment} and the assumption that all breakpoints of $\picomb$ lie in $\frac1q\Z$, the claim holds on the interval $[0,\delta]$. A similar argument verifies the claim on the interval $[1-\delta ,1]$ by showing that $\pifill$ takes slope $s_-$ on this interval.

%Observe that since we chose $s_+$ and $s_-$ as the slope of $\picomb$ on $[0,\delta]$ and $[1-\delta,1]$, respectively, and since $0,\delta,1-\delta,1 \in \tfrac{1}{q}\Z$, it follows that the two-slope fill-in does not change the slope on these intervals \tred{ELABORATE ON THIS}, i.e., $\picomb(r) = \pifill(r) = \pisym(r)$ for all $r \in [0,\delta] \cup [1-\delta,1]$.  
 Therefore, for $x \in [0,\delta]$ we have
 $$\pisym(x) =  s_+ \cdot x \geq  s_+ \cdot \alpha_1 +  s_- \cdot \alpha_2 = \pi(x+y) - \pi(y),$$
 where $\alpha_1$ and $\alpha_2$ are the lengths of the subsets of the interval $[y, x+y]$ taking slopes $s_+$ and $s_-$ respectively.  The inequality holds since $\alpha_1 + \alpha_2 = x$ and $s_+ \geq s_-$.
 
 On the other hand, if $x \in [1-\delta, 1]$,  then
 \begin{align*}
- \pisym(x) &= -\pisym(x-1) = s_- \cdot (1-x)\\
 &\leq s_+\cdot \alpha_1 + s_-\cdot \alpha_2 = \pisym(y) - \pisym(x+y-1) =  \pisym(y)-\pisym(x+y),
 \end{align*}
 where $\alpha_1, \alpha_2$ are the lengths of the subsets of the interval $[x+y-1,y]$ taking slopes $s_+$ and $s_-$ respectively.  The inequality holds since $\alpha_1 + \alpha_2 = 1-x$ and $s_+ \geq s_-$. Here we used the fact that $\pisym$ is periodic modulo $\Z$.  

\textbf{Case 3.} %\item 
Suppose  $(x,y)  \in E_{b} \cup E_{1+b}$.\\
Suppose first that $(x,y) \in E_{b}$.  Then $x + y = b - \beta$ for some $\beta \in [-\delta, \delta]$.
By Case 2, it follows that $\pisym(\beta) + \pisym(x) \geq \pisym(x + \beta)$.  Therefore, $-( \pisym(\beta) + \pisym(x) ) \leq - \pisym(x+ \beta)$.
Since $\pisym$ satisfies the symmetry condition, we have
\begin{align*}
\pisym(b - \beta) &= 1 - \pisym(\beta) 
= 1 - \pisym(\beta) - \pisym(x) + \pisym(x)\\
& \leq 1 - \pisym(x + \beta) + \pisym(x) 
= \pisym(b - x - \beta) + \pisym(x) \\
&= \pisym(x) + \pisym(y).
\end{align*}
 The proof is similar for $(x,y) \in E_{1+b}$.   
%% 
%% 
%% 
%%First, consider $(x,y) \in E_{1-b}$ and suppose that $x+y \geq 1-b$.  
%%Let $\beta\geq 0$ such that $x + y = 1-b +\beta$.  Then $y = 1-b- x +\beta$.
%%Thus, $\pisym(y) = \pisym(1-b-x + \beta) = \pisym(1-b-x) + \int_{1-b-x}^{1-b-x + \beta} \pisym'(t) dt 
%%= 1 - \pisym(x) + \int_{1-b-x}^{1-b-x + \beta} \pisym'(t) dt $.
%%
%%Then
%%$$
%%\begin{array}{rl}
%%\pisym(x) + \pisym(y) &= \pisym(x) + 1 - \pisym(x) + \int_{1-b-x}^{1-b-x + \beta} \pisym'(t) dt\\
%%&= 1 + \int_{1-b-x}^{1-b-x+\beta} \pisym'(t) dt \\
%%&= 1 + \alpha_1 s_{+} + \alpha_2 s_{-}\\
%%& \geq 1 + \beta s_{-} \\
%%&= \pisym(1-b+ \beta).
%%\end{array}
%%$$
%\end{enumerate}

Since Cases 1-3 cover all options for $(x,y) \in [0,1]^2$, we see that $\pisym$ is indeed subadditive.  This concludes the proof.
%
%\tblue{Note also that $\pi_{\text{sym-fill-in}} \geq 0$ since it is subadditive and $\pi_{\text{sym-fill-in}}(0) = 0$.}
%
%	
	%\vspace{-10pt} 
	\mqed
	\end{proof}

%#############################################################
%#############################################################

	\subsection{Concluding the proof of Theorem \ref{thm:theoremZ}}
	
	%Putting Lemmas \ref{lemma:interpolation}, \ref{lemma:eqReduction} and \ref{lemma:symFillIn} and Theorem \ref{thm:2slope} together give the proof of Theorem \ref{thm:theoremZ}. In more detail, 
	
	Consider a strongly minimal function $\bar{\pi}$ for $S = b + \Z$. Since $\bar\pi$ is continuous and periodic with period 1, it is actually uniformly continuous. Thus, we can apply Lemma \ref{lemma:interpolation} to obtain a piecewise linear function $\pipwl$ that is strongly minimal for $S$ and satisfies $\|\bar{\pi} - \pipwl\|_\infty \le \frac{\epsilon}{3}$. Then we employ the equality reduction Lemma \ref{lemma:eqReduction} over $\pipwl$ to obtain a strongly minimal function $\picomb$ with $\|\pipwl - \picomb\|_\infty \le \frac{\epsilon}{3}$. Then we can apply Lemma \ref{lemma:symFillIn} to $\picomb$ to obtain a function $\pisym$ with $\|\picomb - \pisym\|_\infty \le \frac{\epsilon}{3}$ satisfying the other properties given by the lemma. Then the 2-Slope Theorem \ref{thm:2slope} implies that $\pisym$ is extreme, and triangle inequality gives $\|\bar{\pi} - \pisym\|_\infty \le \|\bar{\pi} - \pipwl\|_\infty + \|\pipwl - \picomb\|_\infty + \|\picomb - \pisym\|_\infty \le \epsilon$. This concludes the proof.
	
%#############################################################
%#############################################################
%#############################################################
%#############################################################
	
	\section{Proof of Theorem \ref{thm:theoremZPlus}}

%	\mmnote{Need to see how to cite this things gives the updated version of Sercan's paper}
	
		%A function $\pi$ is {\em quasi-periodic} with period $p \in \R_+$ if there exists $c \in \R$ such that $\pi(x + p) = \pi(x) + c$ for all $r \in \R$.
	The high-level idea of the proof of Theorem \ref{thm:theoremZPlus} is to take the input function $\bar\pi$, which is strongly minimal and quasi-periodic, and remove a linear term from it and scale the domain to obtain a function $\tilde{\pi}$ that is periodic modulo $\Z$ (and in fact strongly minimal). Then we can apply Theorem \ref{thm:theoremZ} to this transformed function $\tilde{\pi}$ to obtain an extreme function $\tpisym$ close to it and then undo the transformation over $\tpisym$ to obtain an extreme function $\pi^*$. The only caveat is that in this last step simply undoing the function transformation does not give us an extreme function: an extra correction step needs to take place to correct the fact that $\tpisym$ is a (slight) modification of $\tilde{\pi}$.
		
		One can transform a quasi-periodic function into a periodic one by removing a linear term (the proof can be readily verified). 
		
\begin{lemma}\label{lem:sum}
Let $\phi$ be quasi-periodic with period $d$, and let $c \in \R$ be such that $\phi(x + d) = \phi(x) + c$. Then the function $\tilde \phi(x) := \phi(x) - \frac{c}{d}x$ is periodic with period $d$.
\end{lemma}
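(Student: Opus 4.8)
The plan is to verify periodicity of $\tilde\phi$ by a single direct substitution. First I would write out $\tilde\phi(x+d)$ using the definition $\tilde\phi(x) = \phi(x) - \frac{c}{d}x$, which gives $\tilde\phi(x+d) = \phi(x+d) - \frac{c}{d}(x+d)$. Then I would invoke the quasi-periodicity hypothesis $\phi(x+d) = \phi(x) + c$ to rewrite the first term, obtaining $\tilde\phi(x+d) = \phi(x) + c - \frac{c}{d}x - c$. The two occurrences of $c$ cancel, leaving $\phi(x) - \frac{c}{d}x = \tilde\phi(x)$, so that $\tilde\phi(x+d) = \tilde\phi(x)$ for every $x \in \R$, which is precisely the assertion that $\tilde\phi$ is periodic with period $d$.

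There is essentially no obstacle in this argument: the only point worth noting is that the linear term being subtracted, namely $\frac{c}{d}x$, has increment exactly $c$ over a step of length $d$, so it exactly absorbs the additive constant that $\phi$ picks up over that step. I would also remark that the slope $c/d$ is the unique choice with this property — any other linear correction would leave a nonzero residual constant — and that no regularity assumption on $\phi$ (continuity, rationality of the period, piecewise linearity, etc.) is needed, since the identity is purely algebraic and holds for an arbitrary function satisfying the quasi-periodicity relation. This is exactly the normalization step used in the proof of Theorem~\ref{thm:theoremZPlus} to pass from a quasi-periodic strongly minimal function to a periodic one before applying Theorem~\ref{thm:theoremZ}.
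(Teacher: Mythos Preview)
Your argument is correct and is exactly the direct verification the paper has in mind when it says ``the proof can be readily verified'' --- the paper gives no further details for this lemma. Your additional remarks on the uniqueness of the slope $c/d$ and the absence of regularity assumptions are accurate but go slightly beyond what is needed.
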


%Marco: this is the longer version, with full proof		
%\begin{lemma}\label{lem:sum}
%If $\phi$ is quasi-periodic with period $d>0$, then $\phi$ is the sum of a periodic function with period $d$ and a linear function.
%\end{lemma}
%
%\begin{proof} Let $c \in \R$ be such that $\phi(x + d) = \phi(x) + c$ for all $r \in \R$. Define the function $\tilde \phi(x) = \phi(x) - \frac{c}{d}x$. It suffices to show that $\tilde \phi$ is a periodic function with period $d$. This follows from:
%	\begin{align*}
%		\tilde \phi(r + d) = \phi(r + d) - \frac{c}{d}(r+d) = \phi(r+d) - \frac{c}{d}r - c = \phi(r) - \frac{c}{d}r = \tilde \phi(r). ~~~~~~~\qed
%	\end{align*}
%\end{proof}

	We also need the following lemma which follows from~\cite[Theorem 7.5.1]{hillePhillips}.

\begin{lemma}\label{lem:sub-per-pos}
Let $\phi\colon\R\to\R$ be a continuous, subadditive, periodic function and $\phi(0) = 0$. Then $\phi \geq 0$.
\end{lemma}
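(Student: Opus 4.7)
The plan is to argue by contradiction via the standard fact that subadditivity gives a linear-in-$n$ upper bound along arithmetic progressions, while periodicity plus continuity gives a uniform lower bound.

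First, I would use periodicity and continuity to note that $\phi$ is bounded: let $d>0$ be a period of $\phi$, so $\phi$ restricted to the compact interval $[0,d]$ is bounded, say $\phi(y) \geq m$ for all $y \in [0,d]$, and then periodicity extends this lower bound $\phi \geq m$ to all of $\R$.

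Next, I would exploit subadditivity iteratively. Starting from $\phi(0) = 0$ and $\phi(x+y) \leq \phi(x) + \phi(y)$, an easy induction on $n$ shows that $\phi(nx) \leq n\,\phi(x)$ for every positive integer $n$ and every $x \in \R$.

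Finally, suppose for contradiction that $\phi(x_0) < 0$ for some $x_0 \in \R$. Then $\phi(nx_0) \leq n\,\phi(x_0) \to -\infty$ as $n \to \infty$, so for $n$ large enough we obtain $\phi(nx_0) < m$, contradicting the uniform lower bound. Hence $\phi \geq 0$ on all of $\R$.

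The only mild subtlety is making sure $\phi$ is genuinely bounded below; this is where the hypotheses of continuity and periodicity (rather than just, say, measurability) are used, and this is the step I would take a little care with, but it is essentially immediate from compactness of a fundamental period. There is no genuine obstacle here.
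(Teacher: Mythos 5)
Your proof is correct, and it is the standard self-contained argument: iterating subadditivity to get $\phi(nx)\le n\phi(x)$ and playing this off against the uniform lower bound coming from continuity and periodicity on a fundamental domain. The paper itself does not spell out a proof at all — it simply cites Theorem 7.5.1 of Hille and Phillips — so your elementary argument is, if anything, a welcome expansion. One small observation: the hypothesis $\phi(0)=0$ is not actually load-bearing in your argument; the induction $\phi(nx)\le n\phi(x)$ for $n\ge 1$ needs only subadditivity, and the contradiction then shows $\phi(x)\ge 0$ for all $x\ne 0$, while $\phi(0)\ge 0$ already follows from subadditivity applied to $0=0+0$. So your proof in fact establishes the conclusion for any continuous, subadditive, periodic $\phi$; the assumption $\phi(0)=0$ in the lemma statement is what's relevant in the paper's application, not a needed hypothesis for this implication.
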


%\begin{proof} This follows from Theorem 7.5.1 in~\cite{hillePhillips}. \mqed
%\end{proof}

	We proceed with proving Theorem \ref{thm:theoremZPlus}. Consider a truncated affine lattice $S = b + \Z_+$ with $b \in \Q\setminus \Z$ and $b \leq 0$. Consider a continuous, strongly minimal, quasi-periodic function $\bar\pi$ for $S$ with rational period $d$ and let $c \in \R$ be such that $\bar\pi(x + d) = \bar\pi(x) + c$. The rationality of $d$, combined with~\cite[Theorem 7]{yildiz2015integer}, implies that $c \geq 0$. Thus, by Lemma~\ref{lem:sum}, there exists $\alpha \geq 0$ such that the function $\bar\pi(r) - \alpha\cdot r$ is periodic with period $d$. Define the transformed function $\tilde{\pi} \colon \R \rightarrow \R$ by removing a linear term and scaling the domain as 
	\vspace{-3pt}
	\begin{align*}
		\tilde{\pi}(r) = \frac{1}{1 - \alpha b} \cdot \left(\bar\pi(d \cdot r) - \alpha (d \cdot r)\right).
	\end{align*}
	
	Observe that $1-\alpha b \geq 0$ since $\alpha \geq 0$ and $b \leq 0$. Not only is $\tilde\pi$ periodic with period 1, it is in fact strongly minimal for an appropriately transformed set 
\ifipco
	$\tilde{S}$ (a proof is presented in the full version of the paper).
\else
	%$\tilde{S}$ (see Appendix \ref{app:tStronglyMin} for proof).
	$\tilde{S}$.
\fi

	\begin{lemma} \label{lemma:SM}
		The function $\tilde{\pi}$ is strongly minimal for $\tilde{S} = \frac{b}{d} + \Z$.
	\end{lemma}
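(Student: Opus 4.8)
The plan is to verify conditions (i), (ii), (iii) of Theorem~\ref{thm:minimal-2} (transported to the lattice $\tilde S = \frac{b}{d} + \Z$) for the function $\tilde\pi$, using that $\bar\pi$ satisfies the corresponding conditions for $S = b + \Z_+$. Actually, since $\tilde\pi$ is periodic with period $1$ (by construction, via Lemma~\ref{lem:sum}), once we establish subadditivity and the symmetry condition $\tilde\pi(r) + \tilde\pi(\frac{b}{d} - r) = 1$ together with $\tilde\pi(0) = 0$, Lemma~\ref{lem:sub-per-pos} gives nonnegativity for free, and periodicity plus $\tilde\pi(0)=0$ handles the value at all integers; so the affine-lattice characterization Theorem~\ref{thm:minimal} applies and we are done. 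The main work is therefore checking subadditivity and symmetry, tracking how the scaling by $d$ and the removal of the linear term $\alpha r$ interact with these properties.

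First I would record the identity $\tilde\pi(r) = \frac{1}{1-\alpha b}\big(\bar\pi(dr) - \alpha d r\big)$ and note $1 - \alpha b \geq 0$ (in fact $> 0$, since $1-\alpha b = 0$ would force $\tilde\pi$ to be ill-defined; one argues $\alpha b < 1$ from $b \le 0$, $\alpha \ge 0$ strictly using that $\bar\pi$ is not identically the degenerate thing — more carefully, the symmetry condition $\bar\pi(0) + \bar\pi(b) = 1$ of $\bar\pi$ forces $\bar\pi(b) = 1 \ne 0$, and one can deduce $1-\alpha b>0$ from this). For subadditivity: $\bar\pi$ is subadditive, hence $r\mapsto \bar\pi(dr)$ is subadditive (precomposition with the linear map $r\mapsto dr$), hence $r \mapsto \bar\pi(dr) - \alpha d r$ is subadditive (subtracting a linear functional preserves subadditivity since $\Delta$ of a linear term is $0$), and finally dividing by the positive constant $1-\alpha b$ preserves subadditivity. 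For the symmetry condition, substitute: $\tilde\pi(r) + \tilde\pi(\tfrac{b}{d} - r) = \frac{1}{1-\alpha b}\big(\bar\pi(dr) - \alpha dr + \bar\pi(b - dr) - \alpha(b - dr)\big) = \frac{1}{1-\alpha b}\big(\bar\pi(dr) + \bar\pi(b-dr) - \alpha b\big) = \frac{1}{1-\alpha b}(1 - \alpha b) = 1$, using the symmetry condition (iii) of $\bar\pi$, namely $\bar\pi(s) + \bar\pi(b - s) = 1$ with $s = dr$. Also $\tilde\pi(0) = \frac{1}{1-\alpha b}(\bar\pi(0) - 0) = 0$ since $\bar\pi(0)=0$ by condition (i) of Theorem~\ref{thm:minimal-2}.

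Having verified that $\tilde\pi$ is periodic mod $\Z$ with $\tilde\pi(0)=0$, subadditive, and satisfies the symmetry condition for $\tilde S = \frac{b}{d}+\Z$ (noting $\frac{b}{d}\notin\Z$ since $b\notin\Z$ and $|b|<|d|$ can be arranged, or at any rate $b/d \notin \Z$ as $b \in (0, d)$ after reduction — this needs a small remark since $b \le 0$, but one reduces mod the period), I invoke Lemma~\ref{lem:sub-per-pos} to get $\tilde\pi \ge 0$ and then Theorem~\ref{thm:minimal} to conclude $\tilde\pi$ is strongly minimal for $\tilde S$. The step I expect to be the genuine obstacle, rather than routine bookkeeping, is pinning down $1 - \alpha b > 0$ strictly (division by zero must be excluded for the definition to make sense) and making sure the reduced shift $b/d \bmod 1$ is genuinely non-integral so that Theorem~\ref{thm:minimal} applies verbatim; both hinge on extracting the right consequence from the symmetry condition of $\bar\pi$ and from $c \ge 0$ (equivalently $\alpha \ge 0$), which is cited from~\cite[Theorem 7]{yildiz2015integer}.
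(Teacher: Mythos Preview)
Your proposal is correct and follows essentially the same approach as the paper: verify $\tilde\pi(0)=0$, subadditivity, and the $\frac{b}{d}$-symmetry directly from the corresponding properties of $\bar\pi$, then invoke Lemma~\ref{lem:sub-per-pos} for nonnegativity and conclude via Theorem~\ref{thm:minimal}. One minor point: you overcomplicate the strict positivity of $1-\alpha b$; since $\alpha\ge 0$ and $b\le 0$ we have $\alpha b\le 0$, hence $1-\alpha b\ge 1>0$ immediately, so no appeal to the symmetry condition of $\bar\pi$ is needed there.
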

	
	\begin{proof}
		Since $\bar\pi$ is quasi-periodic with period $d$, subadditive, and has $\bar\pi(0) = 0$ %\mmnote{Corollary 2.4 of Robert's write up}, 
	it is easy to check that $\tilde{\pi}$ is periodic with period 1, subadditive, and has $\tilde{\pi}(0) = 0$. Moreover, the $b$-symmetry of $\bar\pi$ implies that $\tilde{\pi}$ is $\frac{b}{d}$-symmetric:
	\begin{align*}
		\tilde{\pi}(r) + \tilde{\pi}\left(\frac{b}{d}-r\right) &= \frac{1}{1 - \alpha b} \cdot \left(\bar\pi(d \cdot r) - \alpha (d \cdot r) + \bar\pi(b-d \cdot r) - \alpha (b-d \cdot r)  \right)\\
			&= \frac{1}{1 - \alpha b} \cdot \left( \bar\pi(b) - \alpha b \right) = 1.
	\end{align*}
	
	Since $\bar\pi$ is continuous, $\tilde \pi$ is continuous and so, by Lemma~\ref{lem:sub-per-pos}, we have that $\tilde{\pi}$ is nonnegative. Thus, conditions (i), (ii), (iii) in Theorem~\ref{thm:minimal} are all satisfied, which gives the desired result. \mqed
	\end{proof}
		
	Recall the parameters $M$ and $\epsilon>0$ in the statement of Theorem~\ref{thm:theoremZPlus}. Now set $\epsilon' > 0$ small enough so that $1 +\epsilon'b \geq \frac12$ and $$\max\bigg\{\left(\frac{1}{1+\epsilon' b} - 1\right),\left(1-\frac{1}{1 - \epsilon' b}\right) \bigg\}\cdot \max_{y \in [-M,M]} |\bar\pi(y)| + \epsilon' (M + 2) \le \epsilon.$$ Apply Theorem \ref{thm:theoremZ}, %we can apply Lemmas \ref{lemma:interpolation}, \ref{lemma:eqReduction}, and \ref{lemma:symFillIn} to $\tilde{\pi}$
	 with approximation parameter $\frac{\epsilon'}{1 -\alpha b}$, to obtain a 2-slope function $\tpisym$ that is strongly minimal for $\tilde{S}=\frac{b}{d}+\Z$ and has $\|\tilde{\pi} - \tpisym\|_\infty \le \frac{\epsilon'}{1 - \alpha b}$.

%Marco: done!	 
% \rhnote{Shall we call this $\pisymtilde$?}
	
	We undo the transformation over $\tpisym$ by rescaling the domain and function values, and adding back the linear term to define $\pi' \colon \R \rightarrow \R$ by setting $$\pi'(r) = (1 - \alpha b) \cdot \tpisym\left(\frac{r}{d}\right) + \alpha r.$$ Again notice that $\pi'$ satisfies quasi-periodicity (with period $d$), subadditivity, and $\pi'(0) = 0$. Also, since $\tpisym$ is symmetric about $\frac{b}{d}$, we obtain that $\pi'$ is symmetric about $b$:
	\begin{align*}
		\pi'(r) + \pi'(b-r) &= (1 - \alpha b) \cdot \left( \tpisym\left(\frac{r}{d}\right) + \tpisym\left(\frac{b-r}{d}\right)  \right) + \alpha r + \alpha (b-r) = 1.
	\end{align*}
%	where we use the $\frac{b}{d}$-symmetry of $\pisym$.
	%Thus, from Theorem 21 of Sercan's paper we get that $\pi'$ is minimal. 
	In addition, $\|\bar\pi - \pi'\|_\infty \le \epsilon'$:
	\begin{align*}
		|\pi'(r) - \bar\pi(r)| &= \left\vert(1-\alpha b) \cdot \tpisym\left(\frac{r}{d}\right) + \alpha r - \bar\pi(r)\right\vert \\
		& = \left\vert (1-\alpha b) \left(\tpisym\left(\frac{r}{d}\right) - \tilde{\pi}\left(\frac{r}{d}\right)\right) + (1 -\alpha b) \tilde{\pi}\left(\frac{r}{d}\right) + \alpha r - \bar\pi(r) \right\vert\\
		& \le (1-\alpha b) \|\tpisym - \tilde{\pi}\|_{\infty} \le \epsilon'.
	\end{align*}
	
	Thus the function $\pi'$ satisfies all conditions in Theorem~\ref{thm:minimal-2}, except that $\pi'(-1)$ may be different from $0$, and thus it may not be strongly minimal (and hence extreme). However, we can correct this in the following way.
	
	 Let $\beta = \pi'(b) + \pi'(-1) \cdot b = 1 +\pi'(-1) \cdot b$. Since $|\pi'(-1)| \leq\epsilon'$ and by choice of $\epsilon'$ we have $1 + \epsilon' b > 0$, we obtain $\beta > 0$. Define $\pi^*(r) = \frac{1}{\beta} (\pi'(r) + \pi'(-1) \cdot r)$.  
\ifipco
	The proof of the following lemma is presented in the full version of the paper.
\else
	We show that this function $\pi^*$ now satisfies the conditions of Theorem~\ref{thm:minimal-2}, and thus is strongly minimal, and is close to $\bar{\pi}$.
\fi
 
	 \begin{lemma}
	 \label{lem:pi-star}
	 The function $\pi^*$ is a piecewise linear 2-slope function that is strongly minimal for $S = b + \Z_+$.  Furthermore, $|\bar\pi(r) - \pi^*(r)| \leq \epsilon$ for all $r \in [-M,M]$.
	 \end{lemma}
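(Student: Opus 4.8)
The plan is to show that $\pi^*$ keeps all the structure already verified for $\pi'$ (continuity, two distinct slopes, subadditivity, $b$-symmetry, $\pi'(0)=0$, and closeness to $\bar\pi$), and that the affine correction by $\pi'(-1)\cdot r$ followed by the rescaling by $1/\beta$ simultaneously repairs the one missing condition of Theorem~\ref{thm:minimal-2}, namely $\pi^*(-1)=0$, \emph{and} perturbs values on $[-M,M]$ by no more than the slack built into the choice of $\epsilon'$. In other words, almost everything is inherited, and the only real content is tracking the size of the correction on the compact window.

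\emph{Structure and strong minimality.} By Theorem~\ref{thm:theoremZ} the function $\tpisym$ is continuous, piecewise linear, with exactly two distinct slopes. The passage to $\pi'(r)=(1-\alpha b)\,\tpisym(r/d)+\alpha r$ rescales the domain by $1/d>0$, multiplies values by the positive constant $1-\alpha b\ge 1$ (recall $\alpha\ge0$, $b\le0$), and adds an affine term, so $\pi'$ is again continuous, piecewise linear, and two-slope; the same holds for $\pi^*(r)=\tfrac1\beta(\pi'(r)+\pi'(-1)\,r)$ since $\beta>0$, and in particular the restriction of $\pi^*$ to any compact interval is piecewise linear with two slopes. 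For strong minimality I would check conditions (i)--(iii) of Theorem~\ref{thm:minimal-2} for $S=b+\Z_+$. Condition (i): $\pi^*(0)=\tfrac1\beta(\pi'(0)+0)=0$ since $\pi'(0)=0$, and $\pi^*(-1)=\tfrac1\beta(\pi'(-1)-\pi'(-1))=0$. Condition (ii): adding a linear function contributes nothing to the additivity slack, so $\Delta\pi^*(x,y)=\tfrac1\beta\,\Delta\pi'(x,y)\ge0$ because $\pi'$ is subadditive and $\beta>0$. Condition (iii): using the $b$-symmetry of $\pi'$ already established, $\pi^*(r)+\pi^*(b-r)=\tfrac1\beta\big((\pi'(r)+\pi'(b-r))+\pi'(-1)\,b\big)=\tfrac1\beta(1+\pi'(-1)\,b)=1$ by the definition $\beta=1+\pi'(-1)\,b$. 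Theorem~\ref{thm:minimal-2} then gives that $\pi^*$ is a strongly minimal ($2$-slope, piecewise linear) function for $b+\Z_+$.

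\emph{Approximation on $[-M,M]$.} For $r\in[-M,M]$ I would split $|\pi^*(r)-\bar\pi(r)|\le|\pi^*(r)-\pi'(r)|+|\pi'(r)-\bar\pi(r)|$, the second term being at most $\epsilon'$ as already shown. For the first term, $\pi^*(r)-\pi'(r)=(\tfrac1\beta-1)\pi'(r)+\tfrac1\beta\,\pi'(-1)\,r$; recall $|\pi'(-1)|\le\epsilon'$ (since $\bar\pi(-1)=0$ and $\|\bar\pi-\pi'\|_\infty\le\epsilon'$), which together with $b\le0$ yields $|\tfrac1\beta-1|\le\max\{\tfrac1{1+\epsilon'b}-1,\,1-\tfrac1{1-\epsilon'b}\}$ and, using $1+\epsilon'b\ge\tfrac12$, also $\tfrac1\beta\le 2$; moreover $|\pi'(r)|\le|\bar\pi(r)|+\epsilon'\le\max_{y\in[-M,M]}|\bar\pi(y)|+\epsilon'$ and $|r|\le M$. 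Substituting these bounds and collecting the $\epsilon'$-order terms bounds $|\pi^*(r)-\bar\pi(r)|$ by the left-hand side of the defining inequality for $\epsilon'$ (using once more that $\epsilon'$ was taken small enough), hence by $\epsilon$ on $[-M,M]$. I expect this last bookkeeping — keeping track of the factor $1/\beta$ and the added linear term on the compact window and matching it against the precise choice of $\epsilon'$ — to be the only delicate point; the structural claim and the verification of conditions (i)--(iii) are routine.
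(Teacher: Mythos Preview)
Your proposal is correct and follows essentially the same approach as the paper: verify conditions (i)--(iii) of Theorem~\ref{thm:minimal-2} for $\pi^*$ directly (using that adding a linear term preserves subadditivity and the $2$-slope structure, and that the scaling by $1/\beta$ is tailored to restore symmetry), then bound $|\pi^*-\bar\pi|$ on $[-M,M]$ via the triangle inequality through $\pi'$. You are in fact slightly more careful than the paper in keeping the $\tfrac{1}{\beta}$ factor on the $\pi'(-1)\,r$ term; as you anticipate, matching this against the precise budget $\epsilon'(M+2)$ in the paper's choice of $\epsilon'$ is the only place requiring attention, and it is handled by the same ``$\epsilon'$ small enough'' freedom (e.g.\ replacing $M+2$ by $2M+2$ in the defining inequality for $\epsilon'$).
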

	
		\begin{proof}
				 Observe that $\pi^*(0) = 0$ and $\pi^*(-1) = 0$. Since $\pi'$ is subadditive, piecewise linear and 2-slope, and $\pi^*$ is obtained from $\pi'$ by adding a linear term and scaling by a positive constant, we observe that $\pi^*$ is subadditive, piecewise linear, and 2-slope. %Since $\pi(r) + \pi(b-r) = 1 = \pi(b)$ for all $r\in \R$, we obtain that 
		 Consider $\pi^*(r) + \pi^*(b-r) = \frac{1}{\beta}[\pi'(r) + \pi'(b-r) + \pi'(-1)(r) + \pi'(-1)(b-r)] = \frac{1}{\beta}[1+\pi'(-1)b] = 1$, confirming that $\pi^*$ satisfies the symmetry condition. Thus, by Theorem~\ref{thm:minimal-2}, $\pi^*$ is strongly minimal for $S=b+\Z_+$.
		 
		 Now we show that $|\pi^*(r) - \pi'(r)| \le \epsilon$ for all $r \in [-M, M]$. To start, notice
		%From Theorem 9 of Sercan's paper $\pi$ is non-decreasing wrt $\Z_+$, so their Proposition 19 gives $\pi(-1) \le 0$; thus, $\beta \ge 1$. Then for every $r$ we have 
		%
		\begin{align*}
			|\pi^*(r) - \pi'(r)| = \left\vert \left(\frac{1}{\beta} - 1\right)\pi'(r) + \pi'(-1) \cdot r \right\vert \le \left|\left(1 - \frac{1}{\beta}\right)\right| |\pi'(r)| + |\pi'(-1)| \cdot |r|.
		\end{align*}
		Since $|\pi'(-1)| \le \epsilon'$, if $\pi'(-1) > 0$ then $\beta \geq 1 + \epsilon' b$ and so $\left|\left(1 - \frac{1}{\beta}\right)\right| = \frac{1}{\beta} - 1 \leq  \frac{1}{1+\epsilon' b} - 1$, and if $\pi'(-1) \leq 0$ then $\beta \leq 1 - \epsilon' b$ and therefore $\left|\left(1 - \frac{1}{\beta}\right)\right| = 1 - \frac{1}{\beta} \leq 1 -  \frac{1}{1-\epsilon' b}$. By plugging these bounds in the previous displayed inequality and taking a supremum on the right-hand side over all $r \in [-M, M]$, we see that
		for all $r \in [-M, M]$ 
		%$$|\pi^*(r) - \pi(r)| \le \max\bigg\{\left(\frac{1}{1+\epsilon b} - 1\right),\left(1-\frac{1}{1 - \epsilon b}\right) \bigg\}\cdot \max_{y \in [-M,M]} |\pi(y)| + \epsilon M.$$ 
%
%		
%		
%		\mqed
%	\end{proof}
	 
	%Applying Lemma~\ref{lem:final-shift-scale} with $\epsilon:= \epsilon'$ and $\pi:= \pi'$ gives a 2-slope, piecewise linear, strongly minimal function $\pi^*$ such that for all $r \in [-M, M]$
	%
	\begin{align*}
		|\pi^*(r) - \pi'(r)| &\le \max\bigg\{\left(\frac{1}{1+\epsilon' b} - 1\right),\left(1-\frac{1}{1 - \epsilon' b}\right) \bigg\}\cdot\max_{y \in [-M,M]} |\pi'(y)| + \epsilon' M \\
			&\le \max\bigg\{\left(\frac{1}{1+\epsilon' b} - 1\right),\left(1-\frac{1}{1 - \epsilon' b}\right) \bigg\}\cdot\max_{y \in [-M,M]} |\bar\pi(y)| + \epsilon' (M + 1) \\
			&\le \epsilon - \epsilon'.
	\end{align*}
	The second inequality follows because $||\pi'-\bar \pi||_\infty \leq \epsilon'$, and $\frac{1}{1+\epsilon' b} - 1$ and $1-\frac{1}{1 - \epsilon' b}$ are both in $(0,1)$ because $1 + \epsilon' b \geq \frac12$ and $b \leq 0$.
	
	Finally, from the triangle inequality we get $|\pi^*(r) - \bar\pi(r)| \le |\pi^*(r) - \pi'(r)| + |\pi'(r) - \bar\pi(r)| \le \epsilon$ for all $r \in [-M, M]$. This concludes the proof. \mqed
		\end{proof}
	
	Finally, from Theorem \ref{thm:2slopeZP} we have that $\pi^*$ is extreme. This concludes the proof of Theorem \ref{thm:theoremZPlus}. 
	
%######################################################
%######################################################
%######################################################
%######################################################

\section{Continuous Infinite Relaxation}

	For any convex set $K$, we use $\intr(K)$, $\relint(K)$ and $\lin(K)$ to denote the interior, relative interior and the linearity space of $K$, respectively. There is a well-known connection between CCGFs for $S = b+\Z^n$ and \emph{$S$-free convex sets}. A set $K \subseteq \R^n$ is \emph{$S$-free} if it does not contain any points from $S$ in its interior. A \emph{maximal} $S$-free convex set is one that is inclusion-wise maximal. %Given a problem $C_b(R)$, 
For a closed convex set $K \subseteq \R^n$ with $0 \in \intr(K)$, define the function 
\begin{equation}
\gamma_{K}(r) = \inf\{t > 0 : r \in tK \text{ for all } r \in \R^n\}
\end{equation}
(this is known as the \emph{gauge} function of the set $K $). Since the origin is in the interior of $K$, $\gamma_{K}(r)   < + \infty$ for all $r \in \R^n$. The following theorem states the connection between minimal valid functions and lattice-free sets, and collects other important properties of the latter. Recall the norm defined on positively homogeneous functions by~\eqref{eq:norm-sublinear}.

%\tred{Need to define $\intr, \lin$}

\begin{theorem}[Theorems 4.4,4.5,4.9 in~\cite{basu2015geometric}] \label{thm:geoCGF} Let $S = b+\Z^n$ for some $b \in \R^n\setminus\Z^n$.
%Assume $S=b+\Z^n$ for some $b \in \notin \Z^n$. 
\begin{enumerate}
\item A function $\psi \colon \R^n \to \R$ is a minimal CCGF for $S$ if and only if there exists some maximal $S$-free convex set $K \subseteq \R^n$ such that $0 \in \intr(K)$ and $\psi = \gamma_{K}$.
\item A full dimensional convex set $K \subseteq \R^n$ is a maximal $S$-free convex set if and only if it is an $S$-free polyhedron such that each facet contains a point of $S$ in its relative interior.
%Every maximal $S$-free convex set is a polyhedron with a point from $S$ in the relative interior of each facet.

\item If a maximal $S$-free polyhedron $K$ with $0 \in \intr(K)$ is given by $K = \{x \in \R^n : a_i \cdot x \leq 1, \forall i \in I\}$ for some finite set $I$, then 
%\begin{equation}
$$\gamma_{K}(r) = \max_{i \in I} a_i \cdot r,$$%\tred{M: Do we use this?}
%\end{equation}
and \begin{equation}\label{eq:formula-norm}\|\gamma_K\| = \max_{i\in I} |a_i|_\infty.\end{equation}
% \tblue{R: We might use this to prove Theorem 4 when we have to perturb the quadrilaterals to obtain one that does not satisfy the ration condition.}

\end{enumerate}
\end{theorem}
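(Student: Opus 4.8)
Theorem~\ref{thm:geoCGF} is quoted from~\cite{basu2015geometric}, so here I only outline the argument one would give. Everything hinges on the duality between real‑valued sublinear functions $\psi$ and closed convex sets $K$ with $0\in\intr(K)$, implemented by $\psi\mapsto\{x:\psi(x)\le1\}$ and $K\mapsto\gamma_K$: one has $\gamma_K(x)<1$ if and only if $x\in\intr(K)$, and $\gamma_{\{\psi\le1\}}=\max(0,\psi)$, so on the subclass of \emph{nonnegative} sublinear functions these two maps are mutually inverse. Recall that a minimal CCGF is sublinear; it is moreover nonnegative, since it is continuous and every direction $r$ is a limit of directions $(b+z_k)/t_k$ of arbitrarily far‑off points $b+z_k\in S$, for which the single‑column relaxation gives $\psi((b+z_k)/t_k)\ge 1/t_k\to 0$.

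For part~1, the plan is as follows. For the ``if'' direction, given a maximal $S$-free $K$ with $0\in\intr(K)$, I would show $\gamma_K$ is a CCGF: for $s\in C_b(R)$ the point $\sum_i r^i s_i$ lies in $S$, hence outside $\intr(K)$, so $\sum_i\gamma_K(r^i)s_i\ge\gamma_K(\sum_i r^i s_i)\ge1$ by positive homogeneity and subadditivity; and it is minimal, because a strictly smaller CCGF $\psi'$ (which may be taken sublinear) would satisfy $\{x:\psi'(x)\le1\}\supsetneq K$, still $S$-free (a point of $S$ in its interior would violate validity on a single column), contradicting maximality of $K$. For the ``only if'' direction, given a minimal CCGF $\psi$, put $K=\{x:\psi(x)\le1\}$; it is closed, convex, has $0\in\intr(K)$, and $\gamma_K=\psi$ by nonnegativity; validity forces $K$ to be $S$-free, and any $S$-free enlargement of $K$ would yield a strictly smaller CCGF, so $K$ is maximal.

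Part~2 is the Lov\'asz‑type characterization of maximal lattice‑free convex sets, and I expect its forward implication to absorb essentially all the work. The reverse implication is short: if $K\subsetneq K'$ with $K'$ convex, pick $p\in K'\setminus K$ violating a facet inequality $a_F\cdot x\le\beta_F$ and a point $s\in\relint(F)\cap S$; moving off $s$ in the direction $s-p$ (which points to the interior side of $F$) reaches some $q\in\intr(K)\subseteq\intr(K')$, and since $s$ lies on the open segment between $p\in K'$ and $q$, we get $s\in\intr(K')$, so $K'$ is not $S$-free; hence $K$ is maximal. For the forward implication I would show successively that a maximal $S$-free $K$ is closed (closures of $S$-free sets are $S$-free), that $\rec(K)$ is not full‑dimensional (an $S$-free set contains no ball of radius exceeding the covering radius of $\Z^n$, hence no translate of a full‑dimensional cone), that $K$ is a polyhedron with at most $2^n$ facets (via the Doignon--Bell--Scarf theorem), and that each facet meets $S$ in its relative interior, since otherwise the facet hyperplane could be translated slightly outward while keeping the set $S$-free, contradicting maximality.

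For part~3, write $K=\{x:a_i\cdot x\le1,\ i\in I\}$ with each $a_i\cdot0=0<1$. Then $\gamma_K(r)=\inf\{t>0:a_i\cdot r\le t\ \forall i\}$, which equals $\max_i a_i\cdot r$ whenever some $a_i\cdot r>0$, i.e.\ whenever $r\notin\rec(K)$; and when $r\in\rec(K)$ one has $\gamma_K(r)=0$, while, $\rec(K)$ being not full‑dimensional by part~2, some $a_i\cdot r=0$, so again $\gamma_K(r)=\max_i a_i\cdot r$. Finally $\|\gamma_K\|=\sup_{\|r\|_1=1}\max_i a_i\cdot r=\max_i\sup_{\|r\|_1=1}a_i\cdot r=\max_i|a_i|_\infty$, using that the norm dual to $\|\cdot\|_1$ is $|\cdot|_\infty$. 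The one genuine obstacle in the whole proof is the forward direction of part~2.
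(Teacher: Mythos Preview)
The paper does not prove Theorem~\ref{thm:geoCGF}; it is cited from~\cite{basu2015geometric} and used as a black box, which you correctly note at the outset. Your outline is a sound sketch of the standard arguments---the gauge/sublinear duality for part~1, the Lov\'asz-type characterization for part~2, and the direct computation for part~3 (including the key point that $\rec(K)$ has empty interior, so $\max_i a_i\cdot r\ge 0$ always)---and your identification of the forward direction of part~2 as the one place requiring genuine work is accurate.
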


	The next theorem, in particular, connects the extremality of a CCGF with that of a specific problem $C_b(r^1, \ldots, r^k)$.

\begin{theorem}[Theorem 1.5 in~\cite{bccz}]
\label{thm:corner-rays}
Let $S = b+\Z^n$ for some $b \in \R^n\setminus\Z^n$. Let $K$ be a maximal $S$-free convex set in $\R^n$ with $0$ in its interior. Let $L = \lin(K)$ and let $P = K \cap L^\perp$. Then $K = P + L$, $L$ is a rational space, and $P$ is a polytope. Moreover, let $r^1,\dots , r^k$ be the vertices of $P$, and $r^{k+1},\dots, r^{k+h}$ be a rational basis of $L$.  %Define $r^i = v^i$ for $i=1,\dots ,k$.
Then the function $\gamma_{K}$ is extreme for $C_f$ if and only if the inequality $\sum_{i=1}^k s_i \geq 1$ is extreme for the polyhedron $\conv(C_b(r^1, \dots, r^{k+h}))$. 
\end{theorem}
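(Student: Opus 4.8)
The claim has two parts: the structural description of the maximal $S$-free body $K$, and the equivalence of extremality of $\gamma_K$ (as a CCGF for $S$) with extremality of one explicit inequality over the finite ray system $C_b(r^1,\dots,r^{k+h})$. The plan for the structural part is to simply quote the classical structure theorem for maximal lattice-free convex bodies: by Theorem~\ref{thm:geoCGF}(2), $K$ is already an $S$-free polyhedron each of whose facets meets $S$ in its relative interior and (since $0 \in \intr(K)$) is full dimensional, and the decomposition $K = P + L$ with $L = \lin(K)$ a rational subspace and $P = K\cap L^\perp$ a polytope is exactly the content of that theorem. Choosing a rational basis $r^{k+1},\dots,r^{k+h}$ of $L$ and letting $r^1,\dots,r^k$ be the vertices of $P$ yields the ``corner rays.''

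Next I would evaluate $\gamma_K$ on the corner rays. Since $0 \in \intr(K) \cap L^\perp = \relint(P)$, each vertex $r^i$ with $i \le k$ is a nonzero boundary point of $K$ reached by the segment $[0,r^i]$, so $\gamma_K(r^i) = 1$; and since $r^{k+j} \in L = \rec(K)$, we get $\gamma_K(r^{k+j}) = 0$. Hence the canonical valid inequality $\sum_{i} \gamma_K(r^i)\, s_i \ge 1$ for $C_b(r^1,\dots,r^{k+h})$ is precisely $\sum_{i=1}^k s_i \ge 1$. For the forward direction of the equivalence, I would start from a hypothetical decomposition $\gamma_K = \tfrac12(\psi_1 + \psi_2)$ into distinct CCGFs. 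Replacing each $\psi_\ell$ by a minimal CCGF it dominates and using minimality of $\gamma_K$, one may assume $\psi_1, \psi_2$ are themselves minimal (hence nonnegative and sublinear) and still distinct. Nonnegativity together with $\tfrac12(\psi_1(r^{k+j}) + \psi_2(r^{k+j})) = \gamma_K(r^{k+j}) = 0$ forces $\psi_\ell(r^{k+j}) = 0$ for every lineality ray; thus each $\sum_{i=1}^k \psi_\ell(r^i)\, s_i \ge 1$ is valid for $C_b(r^1,\dots,r^{k+h})$ and the two average to $\sum_{i=1}^k s_i \ge 1$. If the coefficient vectors $(\psi_\ell(r^1),\dots,\psi_\ell(r^k))$ differ, then $\sum_{i=1}^k s_i \ge 1$ is not extreme and we are done; if they agree, I would invoke a rigidity lemma --- two minimal CCGFs that agree on all corner rays and average to $\gamma_K$ must coincide --- to contradict $\psi_1 \ne \psi_2$.

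For the reverse direction I would begin with a nontrivial decomposition of $\sum_{i=1}^k s_i \ge 1$ into valid inequalities of $\conv(C_b(r^1,\dots,r^{k+h}))$, say $\tfrac12(c^1 + c^2)$ with $c^1 \ne c^2$ supported on the first $k$ coordinates, and lift each $c^\ell$ to a minimal CCGF. Concretely, the plan is to realize $c^\ell$ as the vector of gauge values on $r^1,\dots,r^k$ of a maximal $S$-free body $K_\ell$ produced by perturbing the facet inequalities of $K$ in the direction prescribed by $c^\ell$, keeping $L$ inside the lineality space and keeping a point of $S$ in the relative interior of each slightly moved facet --- which survives a small enough perturbation, since $K$ has this property strictly. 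By Theorem~\ref{thm:geoCGF}(1) the functions $\psi_\ell := \gamma_{K_\ell}$ are then distinct minimal CCGFs with $\tfrac12(\psi_1 + \psi_2) = \gamma_K$, so $\gamma_K$ is not extreme.

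The routine pieces are the structural quotation and the restriction bookkeeping; the hard part will be the tight dictionary between minimal CCGFs --- equivalently, maximal $S$-free bodies with $0$ in the interior --- and minimal valid inequalities of the finite system $C_b(r^1,\dots,r^{k+h})$, i.e., the rigidity lemma in the forward direction and the lifting in the reverse direction. I would argue both through polarity: writing $A_\ell \subseteq L^\perp$ for the polar of $K_\ell$, a decomposition of $\gamma_K$ corresponds to a Minkowski-summand decomposition $A_1 + A_2 = 2A_K$, while ``agreement on the corner rays'' pins the support functions of $A_1$ and $A_2$ to the value $1$ in every facet direction of $A_K$. One must then show that this, combined with the normal-fan (face-lattice) structure of $A_K$ and the fact that the vertices $r^i$ of $P$ span $L^\perp$ linearly (as $0 \in \relint(P)$), forces $A_1 = A_2 = A_K$; and, conversely, that every such summand pair genuinely arises from maximal $S$-free bodies --- which is exactly where the ``$S$-point in the relative interior of every facet'' condition of Theorem~\ref{thm:geoCGF}(2) is used.
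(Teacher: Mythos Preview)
The paper does not contain a proof of this statement: Theorem~\ref{thm:corner-rays} is quoted verbatim as ``Theorem~1.5 in~\cite{bccz}'' and is used as a black box (in the proofs of Theorems~\ref{thm:cont2d} and~\ref{thm:extreme-simplices}). So there is no in-paper argument to compare your proposal against; the correct baseline is the original proof in~\cite{bccz}.

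As for your sketch on its own merits, the overall architecture (restrict in one direction, lift in the other) is the standard one, and your computation $\gamma_K(r^i)=1$ for $i\le k$, $\gamma_K(r^{k+j})=0$ is fine. Two places are genuinely incomplete, however. First, your ``rigidity lemma'' is doing all the work in the forward direction and you have not proved it: the polarity heuristic you describe (a Minkowski-summand decomposition of the polar, pinned on the facet normals) does not by itself force $A_1=A_2$; one needs an argument that a minimal CCGF is determined by its values on the corner rays of $K$, which in~\cite{bccz} goes through the explicit representation $\gamma_K(r)=\max_i a_i\cdot r$ and the fact that any CCGF dominated by $\gamma_K$ and agreeing on the $r^i$ must have the same facet description. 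Second, in the reverse direction your ``small perturbation keeping an $S$-point in the relative interior of each facet'' cannot work as stated: the decomposition $c^1,c^2$ of $\sum_{i\le k}s_i\ge 1$ need not be a small perturbation at all, and even when it is, maximality of the perturbed body is not automatic. The actual lifting in~\cite{bccz} constructs $\psi_\ell$ directly from the finite-dimensional valid inequality (via an interpolation over the simplicial cones generated by the corner rays), without passing through a maximal $S$-free body first, and only afterwards shows the resulting function is a CCGF. Your plan would need to be reworked along those lines.
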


%######################################################
%######################################################

\subsection{Proof of Theorem \ref{thm:cont2d}}

	We now prove Theorem \ref{thm:cont2d}, which asserts that in the two-dimensional case $n = 2$, extreme CCGFs are dense in the set of minimal functions. For that we need the following characterization of extreme functions in this two-dimensional case. Recall that a \emph{split set} in $\R^2$ is one of the form $\{(x_1, x_2) : \pi_0 \le \pi_1 x_1 + \pi_2 x_2 \le \pi_0 + 1\}$ for some $\pi_0, \pi_1, \pi_2 \in \R$.

\begin{theorem}[Theorems 3.8, 3.10, 4.1 \cite{cm}]%Cornu\'ejols Margot
\label{thm:2-d-char}
	In $\R^2$, if $S = b+\Z^2$ for some $b \in \R^2\setminus\Z^2$, then the full-dimensional maximal $S$-free polyhedra are splits, triangles, and quadrilaterals. Moreover, consider a maximal $S$-free convex set $K \subseteq \R^2$ with $0 \in \intr(K)$. Then:
\begin{enumerate}
\item %\tred{Theorem 4.1.} 
If $K$ is a split set, then $\gamma_{K}$ is an extreme CCGF for $S$.  % Theorem 4.1  in CM 
\item %\tred{Theorem 3.8.} 
If $K$ is a triangle with vertices $r^1, r^2, r^3$, then $\gamma_{K}$ defines a facet of $C_b(r^1, r^2, r^3)$, i.e., $\sum_{i=1}^3 \gamma_K(r^i)s_i \geq 1$ is a facet of $C_b(r^1, r^2, r^3)$. Note further that $\gamma_K(r^i) = 1$ for all $i=1, 2$ and $3$.%\tred{M: more precise/more detail?}
\item %\tred{Theorem 3.10.} 
Suppose $K$ is a quadrilateral. Let its vertices be $r^1, \ldots, r^4$, and let $w^i\in S$ be a point on the edge $[r^i, r^{i+1}]$ (indices taken modulo $4$).% and consider the corner rays $r^i = v^i - f$, for $i = 1, \dots , 4$. 
Then $\gamma_{K}$ defines a facet of $C_b(r^1,r^2,r^3,r^4)$ if and only if there is no $t \in \R_+$ such that the point $w^i$ divides the edge joining $r^i$ to $r^{i+1}$ in a ratio $t$ for odd $i$ and in a ratio $1/t$ for even $i$, i.e.
\begin{equation}\label{eq:ratio-cond}
\frac{\|w^i - r^i \|}{\|w^i - r^{i+1}\|}  = \begin{cases} t \text{ for } i=1,3, \\ \frac{1}{t} \text{ for } i = 2,4.  \end{cases}
\end{equation}
\end{enumerate}
\end{theorem}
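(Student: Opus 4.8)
The statement is a synthesis of known facts about maximal $S$-free convex sets in the plane, and my plan is to prove it in three stages: identify the possible \emph{shapes}; dispatch the split and triangle cases, which are comparatively easy; and establish the ratio criterion for quadrilaterals, which is the real content. Throughout I would use the dictionary of Theorem~\ref{thm:geoCGF}: $\gamma_K$ is a minimal CCGF exactly when $K$ is a maximal $S$-free polyhedron with $0\in\intr(K)$, each facet of $K$ meets $S$ in its relative interior, and $\gamma_K(r^i)=1$ for every vertex $r^i$ of $K$ because $r^i\in\bd K$; consequently $\sum_i s_i\ge 1$ is valid for $C_b(r^1,\dots,r^k)$ whenever the $r^i$ are the vertices, since $\gamma_K$ is a CCGF.

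For the shapes I would bound the number of facets of $K$ by $2^2=4$: choosing $p_j\in S$ in the relative interior of the $j$-th facet, if there were five facets then two of the $p_j$ would be congruent modulo $2\Z^2$, so their midpoint would lie in $S\cap\intr(K)$, contradicting $S$-freeness. I would then analyze $\rec(K)$: if $\dim\rec(K)=2$ then $K$ contains a translate of a full-dimensional cone and hence a point of $S$ in its interior, so $\dim\rec(K)\le 1$; maximality then forces (the standard lattice-free argument) that an unbounded $K$ is a slab between two parallel rational lines each meeting $S$, i.e.\ a split, while a bounded full-dimensional polygon with at most four and at least three edges is a triangle or a quadrilateral. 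The converse — that such sets are maximal $S$-free precisely when each edge carries a point of $S$ in its relative interior — is already Theorem~\ref{thm:geoCGF}(2).

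For splits I would invoke Theorem~\ref{thm:corner-rays} with $L=\lin(K)$ the (rational, one-dimensional) direction of the slab, $P=K\cap L^\perp=[r^1,r^2]$ and $r^3$ spanning $L$; this reduces extremality of $\gamma_K$ to extremality of $s_1+s_2\ge 1$ for $\conv(C_b(r^1,r^2,r^3))$, which is the classical statement that split (GMI) cuts are facet-defining, re-derivable by producing tight points from lattice points on the two bounding lines (available since $r^3$ is a lattice vector). For triangles I would argue directly about $\conv(C_b(r^1,r^2,r^3))$, which is $3$-dimensional: a hyperplane $a\cdot s=\beta$ containing it would, by looking at recession directions $\lambda\ge 0$ with $\sum_i\lambda_ir^i=0$, force $a_i=c\cdot r^i$ for some $c$, hence $a\cdot s=c\cdot(\sum_i s_ir^i)$ constant over all of $b+\Z^2$, so $a=0$. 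Writing the lattice point $w^i\in\relint[r^i,r^{i+1}]$ uniquely as $(1-\mu_i)r^i+\mu_ir^{i+1}$, the three $s$-vectors $(1-\mu_1,\mu_1,0)$, $(0,1-\mu_2,\mu_2)$, $(\mu_3,0,1-\mu_3)$ lie on the face $\{\sum_i s_i=1\}$ and are affinely independent because the corresponding $3\times 3$ determinant equals $\prod_i(1-\mu_i)+\prod_i\mu_i>0$; hence this face is a facet, and also $\gamma_K(r^i)=1$.

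The quadrilateral case is the crux. The same dimension count shows $\conv(C_b(r^1,\dots,r^4))$ is $4$-dimensional, so the question is whether the face $F=\{s\in\conv(C_b(r^1,\dots,r^4)):\sum_i s_i=1\}$ has dimension $3$. The structural key is that $s\in C_b(r^1,\dots,r^4)$ with $\sum_i s_i=1$ forces $\sum_i s_ir^i\in(b+\Z^2)\cap K$, and a point of $S$ in the relative interior of an edge $[r^i,r^{i+1}]$ is represented uniquely as $(1-\mu_i)e_i+\mu_ie_{i+1}$; thus $F$ is the convex hull of these edge representatives. When each edge carries a single point $w^i$, the four points $v^i=(1-\mu_i)e_i+\mu_ie_{i+1}$ are affinely independent exactly when $\prod_i(1-\mu_i)-\prod_i\mu_i\neq 0$, and rewriting $\mu_i/(1-\mu_i)$ in terms of $\|w^i-r^i\|/\|w^i-r^{i+1}\|$ turns the vanishing of this determinant into the alternating ratio condition in the statement; when that condition holds, $\dim F=2$, so $\sum_i s_i\ge 1$ is not facet-defining, and one exhibits the certificate $\gamma_K=\tfrac12\gamma_{K_1}+\tfrac12\gamma_{K_2}$, where $K_1,K_2$ are the maximal $S$-free sets obtained by tilting the two pairs of opposite edges of $K$ about their lattice points $w^i$, the verification reducing to the same ratio identity. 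I expect the main obstacle to be precisely this last step: pinning down $F$ in full generality — in particular showing that a second point of $S$ in some edge, or a vertex of $K$ lying in $S$, re-inflates $\dim F$ to $3$, so that the exact dividing line is the alternating $t,1/t$ pattern rather than merely $\prod_i\tfrac{\mu_i}{1-\mu_i}=1$ — together with constructing the tilted sets $K_1,K_2$ and checking their $S$-freeness; this is the intricate combinatorial analysis of~\cite{cm}.
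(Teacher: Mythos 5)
This theorem is not proved in the paper — it is imported verbatim from Cornu\'ejols and Margot (Theorems 3.8, 3.10 and 4.1 of \cite{cm}), so there is no in-paper argument to compare against. Your sketch is nevertheless a reasonable reconstruction of the route taken in that reference: the $2^n$-pigeonhole bound on the number of facets and the recession-cone dichotomy to get splits/triangles/quadrilaterals, the reduction of split extremality to the GMI inequality via Theorem~\ref{thm:corner-rays}, and the identification of the face $F=\{\sum_i s_i=1\}$ with the convex hull of the edge representatives $v^i=(1-\mu_i)e_i+\mu_i e_{i+1}$ followed by a determinant test for affine independence. The triangle determinant $\prod_i(1-\mu_i)+\prod_i\mu_i>0$ and the quadrilateral determinant $\prod_i(1-\mu_i)-\prod_i\mu_i$ are both computed correctly, and you correctly flag the quadrilateral degeneracies (extra lattice points on an edge, $S$-points at vertices) as requiring separate treatment.

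The genuine gap is the one you yourself point at, and it deserves to be stated more sharply than ``this is the intricate combinatorial analysis of \cite{cm}.'' For \emph{unconstrained} $\mu_1,\dots,\mu_4\in(0,1)$, the vanishing $\prod_i(1-\mu_i)=\prod_i\mu_i$, i.e.\ $\prod_i\tfrac{\mu_i}{1-\mu_i}=1$, is a codimension-one condition, whereas the alternating pattern $\tfrac{\mu_1}{1-\mu_1}=\tfrac{\mu_3}{1-\mu_3}=t$ and $\tfrac{\mu_2}{1-\mu_2}=\tfrac{\mu_4}{1-\mu_4}=\tfrac1t$ is a codimension-three condition; e.g.\ $\mu=(\tfrac12,\tfrac1{10},\tfrac12,\tfrac9{10})$ kills the determinant but admits no $t$. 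So the two criteria are not equivalent in the abstract: you need the additional structural constraints on the $\mu_i$ that come from $K$ being a maximal $S$-free quadrilateral (relations forced by the $w^i\in b+\Z^2$ lying in cyclic order on the boundary of a convex $S$-free polygon — roughly, that the $w^i$ inherit a parallelogram-like rigidity). Establishing those constraints and showing they collapse $\{\prod_i\tfrac{\mu_i}{1-\mu_i}=1\}$ down to the alternating $t,1/t$ pattern, and only then producing the tilted sets $K_1,K_2$ with $\gamma_K=\tfrac12\gamma_{K_1}+\tfrac12\gamma_{K_2}$, is where the real work of \cite{cm} lives; as written, your proposal asserts an equivalence that does not follow from the determinant computation alone.
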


%\tred{Moved to intro:
%The following result was suggested as true by "Computing with Multi-row Gomory Cuts - Daniel G. Espinoza"
%\texttt{For the case q = 2, Cornu?ejols and Margot [8] proved that all simplex-related sets (called triangle inequalities in [3]) are facet defining for Rf , but that not all Ba sets define facets of Rf . However, is easy to see that there exist an arbitrarily small perturbation ? of a, such that $B_{a + \epsilon}$ defines a facet of Rf . This observation, and the limited numerical precision of floating point representation, justify, from a practical point of view, overlooking the fact that some Ba do not define a facet of Rf for q = 2. Although a similar result for arbitrary q is unknown, it seems reasonable to conjecture that related arguments should show the importance of the sets Ba in general.}}

\paragraph{Proof of Theorem \ref{thm:cont2d}.}
By Theorem \ref{thm:geoCGF},  $\psi = \gamma_{K}$ for a maximal $S$-free convex set $K = \{ x \in \R^2 : a_i x \leq 1\}$ with $0 \in \intr(K)$.  If $\gamma_K$ is extreme, then there is nothing to be done. Therefore, by Theorems~\ref{thm:corner-rays} and~\ref{thm:2-d-char}, we may assume that $K$ is a quadrilateral such that there exists a $t\in \R_+$ satisfying~\ref{eq:ratio-cond}. Let $w_1 \in S$ be such that $a_1\cdot w_1 = 1$. Let $a$ be any vector in $\R^2$ with $|a|_\infty = 1$ that is orthogonal to $w_1$, i.e., $a\cdot w_1 = 0$. Now, define $\tilde a^1 = a^1 + \epsilon a$ and $\tilde a^i = a^i$ for $i=2,3$ and $4$. One can show that for small enough $\epsilon$, $\tilde K = \{ x \in \R^2 : \tilde a_i x \leq 1\}$ is a maximal $S$-free quadrilateral and there is no $t\in \R_+$ such that~\eqref{eq:ratio-cond} holds, and so $\gamma_{\tilde K}$ is extreme by Theorems~\ref{thm:corner-rays} and~\ref{thm:2-d-char}. Moreover, by~\eqref{eq:formula-norm}, $\|\gamma_K\| = \max_i |a_i|_\infty$ and $\gamma_{\tilde K} = \max_i |\tilde a_i|_\infty$. Therefore, $\| \gamma_K - \gamma_{\tilde K} \| \leq \epsilon$.%Let $\bar A$ be any matrix that preserves the integer points on the boundaries.  Then consider $A + \bar A$.  This is now an extreme inequality.... \hfill 
\mqed

%######################################################
%######################################################

	\subsection{Proof of Theorem \ref{thm:contNegative}}

	The following rank identity is a direct consequence of equation (4.5.1) of \cite{meyer} (note that if a matrix $A$ has full column rank then it has a 0-dimensional kernel).

\begin{lemma} 
\label{lem:linear-algebra}
Let $A$ be an $m \times d$ matrix and $B$ an $d \times p$ matrix. If $\rank(A) = d$, then $\rank(A\cdot B) = \rank(B)$. 
\end{lemma}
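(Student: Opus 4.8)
The plan is to reduce the identity to the rank--nullity theorem by exploiting the fact that full column rank is equivalent to having trivial kernel. First I would recall that $\rank(A) = d$ means the $d$ columns of $A$ are linearly independent, so the linear map $x \mapsto Ax$ from $\R^d$ to $\R^m$ is injective; equivalently, $\ker(A) = \{\0\}$.

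Next I would identify the kernels of $AB$ and $B$. For any $x \in \R^p$ we have $ABx = \0$ if and only if $Bx \in \ker(A)$, and since $\ker(A) = \{\0\}$ this holds if and only if $Bx = \0$. Hence $\ker(AB) = \ker(B)$ as subspaces of $\R^p$, and in particular $\dim \ker(AB) = \dim \ker(B)$. Applying the rank--nullity theorem to the $m \times p$ matrix $AB$ and to the $d \times p$ matrix $B$ then gives
\[
\rank(AB) = p - \dim\ker(AB) = p - \dim\ker(B) = \rank(B),
\]
which is exactly what equation (4.5.1) of \cite{meyer} records. The only place the hypothesis is used is in the step $\ker(A) = \{\0\}$; everything else is routine.

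An equivalent and perhaps slightly more transparent route is to argue on column spaces directly: $\rank(AB)$ equals the dimension of the image under $A$ of the column space of $B$, and since $A$ is injective on $\R^d$ (and a fortiori on the column space of $B$, which is a subspace of $\R^d$), this linear map preserves dimension, so the dimension of that image equals $\rank(B)$. Either way there is no genuine obstacle here; the statement is a textbook fact, and the proof is two lines once the injectivity of $A$ is made explicit.
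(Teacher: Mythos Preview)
Your proof is correct and follows exactly the idea the paper alludes to: the paper does not give a detailed argument but simply cites equation (4.5.1) of \cite{meyer} together with the parenthetical remark that full column rank means a trivial kernel, which is precisely the key step you make explicit before invoking rank--nullity. There is nothing to add.
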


The proof of the following theorem follows the proof of Theorem 3.8 in \cite{cm} for the 2-dimensional case.
\begin{theorem}
\label{thm:extreme-simplices}
	Let $S = b+\Z^n$ for some $b \in \R^n\setminus\Z^n$. Let $K\subseteq \R^n$ be a maximal $S$-free simplex with $0 \in \intr(K)$ and with exactly one point from $S$ on each facet. Then $\gamma_{K}$ is an extreme CCGF if and only if the affine hull of $K\cap S$ is all of $\R^n$.  
\end{theorem}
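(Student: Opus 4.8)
The plan is to reduce the statement, via the corner-ray correspondence of Theorem~\ref{thm:corner-rays}, to asking when a single inequality defines a facet of a finite polytope, to pin that face down explicitly using $S$-freeness and maximality, and then to settle the resulting affine-independence condition with Lemma~\ref{lem:linear-algebra}. Throughout, write $r^1,\dots,r^{n+1}$ for the vertices of the simplex $K$, $R$ for the $n\times(n+1)$ matrix with those columns, and $w^j$ for the (by hypothesis unique) point of $S$ on the facet $G_j$ of $K$ opposite $r^j$; by Theorem~\ref{thm:geoCGF}(2) each $w^j$ lies in $\relint(G_j)=\relint(\conv\{r^i:i\ne j\})$. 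Since $K$ is a maximal $S$-free simplex with $0\in\intr(K)$, it is a full-dimensional polytope, so $\lin(K)=\{0\}$ and Theorem~\ref{thm:corner-rays} applies with $P=K$, $h=0$: $\gamma_K$ is extreme if and only if $\sum_{i=1}^{n+1}s_i\ge1$ is extreme for $Q:=\conv(C_b(r^1,\dots,r^{n+1}))$. As in \cite{cm,bccz}, I would first record that $Q$ is full-dimensional in $\R^{n+1}$; this uses $0\in\intr(K)$, which makes $\ker R$ a line spanned by a strictly positive vector, from which one produces an affinely spanning subset of $C_b(R)$. Given this, ``extreme for $Q$'' is equivalent to ``facet-defining for $Q$'', i.e.\ to $\dim\bigl(Q\cap\{s:\sum_i s_i=1\}\bigr)=n$.

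\emph{Identifying the tight face.} Next I would show $Q\cap\{s:\sum_i s_i=1\}=\conv\{\bar s^1,\dots,\bar s^{n+1}\}$, where $\bar s^j\in\R^{n+1}$ has $j$-th entry $0$ and remaining entries the (positive) barycentric coordinates of $w^j$ relative to $\{r^i:i\ne j\}$. Validity of $\sum_i s_i\ge1$: if $s\in C_b(R)$ had $t:=\sum_i s_i<1$, then $Rs=\sum_i s_i r^i=t\cdot\bigl(\sum_i(s_i/t)r^i\bigr)+(1-t)\cdot 0$ is a convex combination of a point of $K$ with $0\in\intr(K)$ carrying positive weight $1-t$, hence lies in $\intr(K)$, contradicting $S$-freeness; and $t=0$ is impossible since $0\notin S$. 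For $s\in C_b(R)$ with $\sum_i s_i=1$, the point $Rs=\sum_i s_i r^i$ lies in $K\cap S$, hence in $\bd(K)$ by $S$-freeness, hence on some facet $G_j$; the one-point-per-facet hypothesis forces $Rs=w^j$, and since $\{r^i:i\ne j\}$ are affinely independent the barycentric representation of $w^j$ is unique, so $s=\bar s^j$. Conversely each $\bar s^j$ lies in $C_b(R)$, and $S$-freeness together with the disjointness of the relative interiors of distinct facets gives $K\cap S=\{w^1,\dots,w^{n+1}\}$ with all $w^j$ distinct. Hence the face has dimension $n$ if and only if $\bar s^1,\dots,\bar s^{n+1}$ are affinely independent.

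\emph{Finishing with linear algebra.} Let $\bar R$ be the $(n+1)\times(n+1)$ matrix with columns $\binom{r^i}{1}$; it is invertible because $K$ is a full-dimensional simplex, and $\bar s^j=\bar R^{-1}\binom{w^j}{1}$. Therefore $\bar s^j-\bar s^1=\bar R^{-1}\binom{w^j-w^1}{0}$ for $j=2,\dots,n+1$, so by Lemma~\ref{lem:linear-algebra} applied with $A=\bar R^{-1}$ (full column rank $n+1$) the family $\{\bar s^j-\bar s^1\}_{j\ge2}$ is linearly independent if and only if $\{\binom{w^j-w^1}{0}\}_{j\ge2}$ is, i.e.\ if and only if $w^1,\dots,w^{n+1}$ are affinely independent in $\R^n$. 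Since $K\cap S=\{w^1,\dots,w^{n+1}\}$, this is precisely the condition $\aff(K\cap S)=\R^n$, and chaining the equivalences of the three steps proves the theorem.

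I expect the main obstacle to be the middle step, determining the tight face exactly: this is where $S$-freeness (to push $Rs$ to the boundary), maximality via Theorem~\ref{thm:geoCGF}(2) together with the hypothesis of exactly one lattice point per facet (to identify $Rs$ with the unique relative-interior point $w^j$ and hence pin down $s=\bar s^j$), and the full-dimensionality of $\conv(C_b(R))$ (so that a face of dimension $<n$ genuinely witnesses non-extremality, which is what the \emph{only if} direction needs) all enter. Making the full-dimensionality claim self-contained rather than quoting \cite{cm,bccz} is the only mildly technical ingredient; the validity argument and the matrix computation in the last step are routine.
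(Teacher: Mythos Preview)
Your proposal is correct and follows essentially the same approach as the paper: reduce via Theorem~\ref{thm:corner-rays} to a facet question for $\conv(C_b(r^1,\dots,r^{n+1}))$, observe that any tight point $s$ must have $Rs\in K\cap S=\{w^1,\dots,w^{n+1}\}$, and then use the invertible ``homogenized'' vertex matrix $\bar R$ together with Lemma~\ref{lem:linear-algebra} to transfer affine independence between the $s$'s and the $w$'s. The paper is terser---it works with arbitrary tight points $s^j$ and the matrix identity $\bar Z=\bar R\cdot X$ rather than explicitly pinning down the face as $\conv\{\bar s^1,\dots,\bar s^{n+1}\}$, and it cites \cite{cm} for full-dimensionality rather than sketching it---but the substance is identical.
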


\begin{proof}
Let $y^1, \dots, y^{n+1} \in S$ be the points from $S$ on the boundary of $K$ and let $r^1, \dots, r^{n+1} \in \R^n$ be the vertices of $K$.  %Let $r^i = v^i - f$ for $i=1, \dots, n+1$.  
By Theorem~\ref{thm:corner-rays}, $\psi$ is an extreme CCGF if and only if $\sum_{i=1}^{n+1} s_i \geq 1$ is extreme for $C_b(r^1, \dots, r^{n+1})$.  

 Let $F$ be the face of $C_b(r^1,\dots ,r^{n+1})$ defined by $\sum_{i=1}^{n+1} s_i = 1$. %  As $f + r^i = v^i$, for $i=1, \dots, n+1$ is on the boundary of $K$, we have that $\psi(r^i) = 1$ for all $i=1, \dots, n+1$.  Hence is $s \in F$, then $\sum_{i=1}^{n+1} s_i = 1$.   
Since $C_b(r^1, \dots, r^{n+1})$ is full dimensional (see \cite{cm} Lemma~1.6), it follows that $F$ is a facet if and only if there exist $n+1$ affinely independent vectors $s^j$, for $j=1, \dots, n+1$, such that 
 $$
 \sum_{i=1}^{n+1} s_i^j = 1  \ \ \text{ and } z^j = %f + \sum_{i=1}^{n+1} r_i s_i^j = \sum_{i=1}^{n+1}(f + r_i) s_i^j = 
 \sum_{i=1}^{n+1} r^i s_i^j \ \text{ is in }S
 $$
 Since $z^j$ is a convex combination of the vertices of $K$, $z^j \in K$ also. Thus, in fact, each $z^j \in \{y^1, \ldots, y^{n+1}\}$. Let $Z, R, X$ be the matrix with columns $\{z^j\}_{j=1, \ldots, n+1}$, $\{r^i\}_{i=1, \ldots, n+1}$, and $\{s^j\}_{j=1, \ldots, n+1}$, respectively. Let $\bar Z$ and $\bar R$ denote the matrices $Z$ and $R$, respectively, with a row of 1s added as the last row of the matrix. Then it follows that
 $$
\bar Z= \bar R \cdot X.
$$

Since $K$ is a full dimensional simplex, $\rank(\bar R) = n+1$. Therefore, by Lemma~\ref{lem:linear-algebra},  $\rank(X) = \rank(\bar Z)$.  Hence, the vectors $s^i$ for $i=1, \dots, n+1$ are affinely independent if and only if the points $z^1, \dots, z^{n+1}$ are affinely independent, which can happen if and only if $\{z^1, \dots, z^{n+1}\} = \{y^1, \dots, y^{n+1}\}$ and $y^1, \dots, y^{n+1}$ are affinely independent. \mqed
\end{proof}

In a similar way, it is easy to see that all $S$-free polyhedra $K$ whose vertices are in $S$ give rise to extreme CCGFs. This is simply because the vector $s^j = e^j$ is valid (where $e^j$ is the $j$-th unit vector in $\R^k$ where $k$ is the number of vertices of $K$) since every corner ray points to a point in $S$.  Thus, the $X$ matrix has full rank.

\begin{lemma}\label{ex:simplex}  For every $n\geq 3$ there exists a maximal $\Z^n$-free simplex in $\R^n$ such that the affine hull of the integer points on its boundary is a $(n-1)$-dimensional subspace of $\R^n$.
\end{lemma}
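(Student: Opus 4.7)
The plan is to construct an explicit example based on having the simplex $K$ straddle a hyperplane $H$ so that $K\cap H$ is a maximal $\Z^{n-1}$-free polytope in $H$, and so that $K$ is thin enough in the direction normal to $H$ that the only integer value of the normal coordinate attained on $K$ is $0$. Under these conditions any integer point of $K$ must lie in $K\cap H$, whose interior is lattice-free; each facet of $K$ inherits a lattice point in its relative interior from the corresponding boundary facet of $K\cap H$; and every boundary lattice point of $K$ automatically lies in $H$.

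For $n=3$ I would pick $H=\{x_3=0\}$ and the rotated unit square $Q\subset H$ with vertices $(1/2,-1/2,0)$, $(3/2,1/2,0)$, $(1/2,3/2,0)$, $(-1/2,1/2,0)$; its four edges respectively contain the integer points $(0,0,0)$, $(1,0,0)$, $(1,1,0)$, $(0,1,0)$ in their relative interior, and its two-dimensional interior is lattice-free, so $Q$ is maximal $\Z^2$-free. Since $Q$ is a parallelogram, it is realizable as the cross-section of a tetrahedron with two vertices at $x_3=1/2$ and two at $x_3=-1/2$, the vertices of $Q$ being the midpoints of the four slanted edges; solving the resulting linear system, one explicit choice is
\[ v_1=(1,-1,1/2),\quad v_2=(-1,1,1/2),\quad v_3=(0,0,-1/2),\quad v_4=(2,2,-1/2), \]
and I would set $K=\conv(v_1,v_2,v_3,v_4)$. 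The verification then proceeds in three routine steps: (i) $K\subset\{|x_3|\leq 1/2\}$ forces integer points of $K$ to have $x_3=0$ and hence lie in $K\cap H=Q$, whose interior is lattice-free; (ii) each of the four facets of $K$ meets $H$ in exactly one edge of $Q$ and so contains the unique integer point on that edge in its relative interior, giving maximal $\Z^3$-freeness via Theorem~\ref{thm:geoCGF}(2); (iii) the four vertices of $K$ are non-integer, the top and bottom edges sit at $x_3=\pm 1/2$, and each slanted edge crosses $H$ only at a non-integer vertex of $Q$, so the only boundary lattice points of $K$ are the four listed above, and they span the two-dimensional linear subspace $\{x_3=0\}$.

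For general $n\geq 4$ the same straddling strategy applies: take an $n$-simplex with two vertices at $x_n=1/2$ and $n-1$ vertices at $x_n=-1/2$, and arrange the other coordinates so that $K\cap\{x_n=0\}$ is a maximal $\Z^{n-1}$-free polytope of the prism combinatorial type $\Delta_{n-2}\times I$ built inductively from $Q$. The step I expect to be the main obstacle is precisely this last one: producing in each dimension a maximal lattice-free polytope of the required combinatorial type whose vertices are exactly midpoints of pairs of simplex vertices at heights $\pm 1/2$. The constraints linking the locations of the prism's integer facet-interior points, the parallelogram/midpoint condition on vertex heights, and the thinness condition $|x_n|\leq 1/2$ become tighter with $n$ and require a careful recursive choice of rational coordinates.
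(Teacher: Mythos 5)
Your construction for $n=3$ is correct and pleasantly geometric: the tetrahedron $K=\conv(v_1,\dots,v_4)$ does have midsection the rotated unit square $Q$, its four facets each meet $H=\{x_3=0\}$ in an edge of $Q$ whose midpoint is the required lattice point, and the slab condition $|x_3|\le\tfrac12$ confines all lattice points of $K$ to $H$. In fact the set of integer points you obtain, $\{0,e^1,e^2,e^1+e^2\}$, coincides with the one produced by the paper's explicit construction, though the two simplices themselves are different. So you genuinely have a cleaner proof than the paper's in the three-dimensional case.

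However, the lemma is a statement for \emph{all} $n\ge 3$, and for $n\ge 4$ your argument is incomplete --- a gap that you yourself flag. The cross-section strategy with a $2+(n-1)$ vertex split over $\{x_n=\pm\tfrac12\}$ forces $Q=K\cap H$ to be a prism of combinatorial type $\Delta^{n-2}\times I$, and moreover to be expressible as the average $\tfrac12(\conv\{v_1,v_2\}+\conv\{w_1,\dots,w_{n-1}\})$; you would then need this $Q$ to be maximal $\Z^{n-1}$-free with exactly one lattice point in the relative interior of each of its $n+1$ facets. Constructing such a polytope for every $n-1\ge 3$ is not routine: maximal lattice-free prisms of this type are not obviously available in all dimensions, and you also have to verify that the resulting $K$ has \emph{no} lattice points in the open slab other than those in $Q$, i.e.\ that $Q$ captures all of $K\cap\Z^n$. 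Without an explicit family of such prisms and that verification, the proof does not go through for $n\ge 4$. The paper avoids this obstacle entirely by abandoning the cross-section picture and writing down, for every $n$, an explicit $H$-representation of a simplex (via the matrices $A$, $\bar A$, $C$ and a small parameter $\epsilon$) whose lattice points can be enumerated directly as $\{0,e^1,\dots,e^{n-1},e^1+e^2\}$, all on $\{x_n=0\}$, with one per facet after a small perturbation $\bar A$. If you want to complete your route, the concrete missing ingredient is a uniformly-constructed family of maximal $\Z^{m}$-free prisms $\Delta^{m-1}\times I$ ($m\ge 3$) whose facet lattice points behave as needed; otherwise, switching to the paper's direct algebraic construction for $n\ge 4$ is the safer path.
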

\begin{proof}
Let $0< \epsilon < \tfrac{1}{4}$. 
For $n\geq 3$, let $A, \bar A \in \R^{(n-1) \times n}$, $C \in \R^{2 \times n}$, $b \in \R^{n-1}$ and $d \in \R^2$ be given as
$$
A = \left[\begin{array}{ccccccccc} -1 & \epsilon & 0 & \dots &  0&  \\
  & \ddots  & \ddots   & \ddots & \vdots\\
  & &-1 & \epsilon  &     0 \\
  \epsilon &  &  & -1& 0 
 \end{array}
 \right],
 \ \ 
 \bar A = \left[\begin{array}{ccccccccc}  &  &  && \\
  & & &  &  \\ \\
  & && & &   & \\
  0 & -\bar\epsilon & 0 &\cdots& 0& &  
 \end{array}
 \right],
\  \
b = \left[\begin{array}{c}\epsilon \\ \vdots \\ \epsilon   \end{array}\right], %\ \ 
$$
$$
C = \left[\begin{array}{cc|ccc|c} 
 - 1 & -1& - \frac{3}{2}     & \cdots  &-\frac{3}{2}  & -\frac{1}{\epsilon}\\
 1 & 1 & \frac{3}{2}  & \cdots & \frac{3}{2} & \tfrac{1}{2\epsilon} 
 \end{array}
%C = \left[\begin{array}{cc|ccc|c} 
% - 1 & -\frac{1}{2}& - \frac{3}{2}     & \cdots  &-\frac{3}{2}  & -\frac{1}{2\epsilon}\\
% 1 & 1 & \frac{3}{2}  & \cdots & \frac{3}{2} & \tfrac{1}{2\epsilon} 
% \end{array}
 \right], \ \ \text{ and }
 d = \left[\begin{array}{c}0 \\ 2 \end{array}\right]. %\ \ 
$$
Note that for $n=3$, $$C = \left[\begin{array}{ccc} 
 - 1 & - 1 & -\frac{1}{\epsilon}\\
 1 & 1  & \tfrac{1}{2\epsilon} 
 \end{array}
 \right],$$
where $\bar \epsilon>0$ is chosen to be arbitrarily small.

Let $\Delta_3 = \bar \Delta_3 = \{ x\in \R^3 : Ax \leq b, Cx \leq d\}$.  For all $n \geq 4$, let $\Delta_n = \{ x \in \R^n : Ax \leq b, C x \leq d\}$ and $\bar\Delta_n = \{ x \in \R^n : (A+ \bar A)x \leq b, C x \leq d\}$.   We will show that $\bar \Delta_n$ is a maximal $\Z^n$-free simplex containing exactly $n+1$ lattice points that are contained in the $(n-1)$-dimensional subspace $\{ x: x_{n} = 0\}$ of $\R^n$.  We will only show the calculations for $n\geq 4$ since the calculations for $n=3$ are similar.  

To do so, we prove first that $\Delta_n$ is a $\Z^n$-free simplex with containing exactly $n+1$ lattice points that are contained in the $(n-1)$-dimensional subspace $\{ x: x_{n} = 0\}$ of $\R^n$.  Then, since $\bar \Delta_n$ comes from perturbing the inequality matrix from $\Delta_n$, and $\bar \Delta_n$ contains all its lattice points on the relative interior of its facets, that is, one for each facet, we will have proved that it is a maximal $\Z^n$-free convex set.

%Let $W_i = [\frac{1}{\epsilon^{n-1}}, \frac{1}{\epsilon^{n-2}}, \dots, \frac{1}{\epsilon}]^T$.   
Let $W_i = [\frac{1}{\epsilon^{n-i}}, \frac{1}{\epsilon^{n-i-1}}, \dots, \frac{1}{\epsilon}, \frac{1}{\epsilon^{n-1}}, \dots, \frac{1}{\epsilon^{n-i+1}}]^T$. 
For example, $W_1 = [\frac{1}{\epsilon^{n-1}}, \frac{1}{\epsilon^{n-2}}, \dots, \frac{1}{\epsilon}]^T$ and $W_2 = [\frac{1}{\epsilon^{n-2}}, \frac{1}{\epsilon^{n-2}}, \dots, \frac{1}{\epsilon}, \frac{1}{\epsilon^{n-1}}]^T$.  
Then 
$$
W_i^TAx = -\left(\left(\frac{1}{\epsilon}\right)^{n-1} - 1\right) x_i, \ \ 
%W_i^T \bar Ax = -\epsilon^{i} x_2, \ \ \text{  and  } \ \ 
W_i^Tb = \sum_{j=0}^{n-2}\frac{1}{\epsilon^j} = \frac{\left(\frac{1}{\epsilon}\right)^{n-1} -1 }{\frac{1}{\epsilon} -1}.
$$ 

We will show that $x_i > -1$ for all $n \geq 3$ and $i =1, \dots, n-1$.

Since $W_i \geq 0$, for $i=1, \dots, n-1$, the inequality $W_i^T A x \leq W_i^T b$ is valid for $\Delta_n$.  This directly implies that $x_i \geq -\frac{1}{\frac{1}{\epsilon}-1} > -2 \epsilon > -1$ for all $i=1, \dots, n-1$.
%
%
%
%For $n\geq 4$, \tred{We do the same analysis as above and then appeal to matrix inversion perturbation theory to get the desired result.}
%
%we start with $x_2$.  Observe that $[1 \ 1]  Cx = \frac{1}{2} x_2$, and $[1 \ 1] d = 2$, hence $x_2 \leq 4$ is valid for $\Delta_n$.  
%
%The inequality $W_i^TA \leq W_i^T b$ expands as %$W_i^T(A+ \bar A)x \leq W_i^Tb$ expands as 
%$$
%- \left(\left(\frac{1}{\epsilon}\right)^{n-1} -1\right) x_i 
%%- \epsilon^i x_2 
%\leq \frac{ \left(\frac{1}{\epsilon}\right)^{n-1}  -1}{ \frac{1}{\epsilon}-1 }.
%$$
%Rearranging, we have
%$x_i \geq -\frac{1}{\frac{1}{\epsilon}-1} > -2 \epsilon > -1$.  
%
%$$
%x_i 
%\geq -\frac{1 }{\frac{1}{\epsilon} -1} - \frac{\epsilon^i}{\left(\frac{1}{\epsilon}\right)^{n-1} -1} x_2
% \geq - \frac{1 }{ \frac{1}{\epsilon}-1 } -\frac{4\epsilon^i}{\left(\frac{1}{\epsilon}\right)^{n-1} -1} 
% > - 6 \epsilon > -1.
%$$
%
%Thus we have established that $x_i > -1$ for all $n\geq 3$ and $i=1, \dots, n-1$.

Next, notice that $[1, 1] Cx \leq [1, 1]d$ is $\frac{-1}{2\epsilon} x_n \leq 2$ which implies that $x_n \geq -4 \epsilon  > -1$ because $\epsilon < \frac{1}{4}$.

%Next, suppose that $x \in \Delta_n \cap \Z^n$, and hence $x_i \geq 0$ for all $i=1, \dots, n-1$.  
%
%
%
%The last inequality in $Ax \leq b$ implies that $-\epsilon \sum_{i=1}^{n-1} x_i - x_n \leq 0 $.  Rearranging and subtracting $\tfrac{1}{2} x_n$ in a convenient way, we have
%$$
%\frac{1}{2}x_n = -\frac{1}{2} x_n + x_n \geq -\frac{1}{2} x_n -\epsilon \sum_{i=1}^{n-1} x_i \geq -\frac{1}{2} x_n -\epsilon\, (c^Tx - \tfrac{1}{2\epsilon} x_n) \geq - 2 \epsilon.
%$$
%It follows that $x_n > -1$ since $0 < \epsilon < \tfrac{1}{4}$.

This combined with the inequality $C_2 x \leq d_2$ proves that $\Delta_n \cap \Z^n$ is bounded.  
Furthermore,  observe that $\frac{\epsilon}{2n}(1,1, \dots, 1) \in \intr(\Delta_n)$, which shows that $\Delta_n$ is a full dimensional simplex in $\R^n$.  

Consider now any $x \in \Delta_n \cap \Z^n \subseteq \R^n_+$.  The inequality $C_2 x \leq d_2$ implies that $x$ has support at most 2 and if $x_i > 0$ for any $i=3, \dots, n$, then $x$ has support exactly 1.  We claim that $x_i \leq 1$ for all $i=1, \dots, n$.  Clearly this is true for $i=3, \dots, n$ by the inequality $C_2 x \leq d_2$.  If $x_2 > 0$, either $x_1 = 0$, in which case then first inequality from $Ax \leq b$ implies that $x_2 \leq 1$, or $x_1 = 1$ and $x_2 = 1$, by the inequality $C_2  x \leq d_2$.
% implies that $x_2 \leq 1$.  
Similarly, if $x_1 > 0$, then $x_1 \leq 1$.  %This could be shown more precisely and might have a case distinction if $n=3$, then $n-1 = 2$ versus if $n > 3$.
This finishes the claim.

The arguments above prove that the only possible feasible integer points are $\{0, e^1, e^2, \dots, e^{n-1}, e^1 + e^2\}$.    
%By inspection, these points are all feasible and in fact are tight at exactly one inequality of $\Delta_n$.  Finally, we see that all the feasible integer points satisfy the equality $x_n = 0$.  Hence, $\Delta$ is a maximal lattice-free simplex in $\R^n$ such that all of its lattice points lie in a $(n-1)$-dimensional subspace of $\R^n$.
We see that all of the feasible integer points lie in the subspace $x_n = 0$, and thus, lie in a $(n-1)$-dimensional subspace of $\R^n$.

Finally, observe that each integer points $x \in \Delta_n \cap \Z^n$ lies the relative interior of a unique facet of $\bar \Delta_n$.  Furthermore, since $\bar \epsilon>0$ is arbitrarily small and $\Delta_n$ is a polytope, it follows, e.g.,  from~\cite[Lemma 3.6]{bhk2} that $\Delta_n \cap \Z^n = \bar \Delta_n \cap \Z^n$.   This proves that $\bar \Delta_n$ is maximial lattice free and all its integer points lie on a $(n-1)$-dimensional subspace of $\R^n$.  \mqed
\end{proof}

We next show that, in every dimension $n\geq 3$, there exist minimal inequalities that are not arbitrarily close to extreme inequalities. % \tred{Technically we need to have an example like Example~\ref{ex:simplex} for every $n \geq 3$.}

\begin{theorem}\label{thm:negative}
	Let $n \geq 3$ and let $S = b+\Z^n$ for some $b \in \R^n\setminus\Z^n$. Let $K \subseteq \R^n$ be a maximal $S$-free simplex with $0 \in \intr(K)$ and with exactly one point from $S$ on each facet.  Suppose that  the affine hull $K\cap S$ is a strict subset of $\R^n$. Then there exists an $\epsilon > 0$ such that for all minimal CCGFs $\gamma'\neq \gamma_K$ such $\|\gamma_K - \gamma'\| < \epsilon$, $\gamma'$ is not extreme.
\end{theorem}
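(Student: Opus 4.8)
The plan is to exploit the characterization of extreme CCGFs from Theorem~\ref{thm:corner-rays} together with Theorem~\ref{thm:extreme-simplices}: the function $\gamma_K$ fails to be extreme precisely because the lattice points $K \cap S$ lie in a proper affine subspace $H \subsetneq \R^n$. The key geometric obstruction is that any maximal $S$-free convex set $K'$ that is ``close'' to $K$ (in the sense that $\|\gamma_K - \gamma_{K'}\|$ is small) must itself be a simplex with exactly one point of $S$ in the relative interior of each facet, and moreover the points $K' \cap S$ must still lie on (or very near) the hyperplane $H$. Since $S$-points are a discrete set, ``very near $H$'' will force them to actually lie on $H$, and then Theorem~\ref{thm:extreme-simplices} shows $\gamma_{K'}$ is not extreme either.

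\medskip\noindent\textbf{Key steps.} First, I would set up a correspondence: by Theorem~\ref{thm:geoCGF}, every minimal CCGF $\gamma'$ equals $\gamma_{K'}$ for a maximal $S$-free convex set $K'$ with $0 \in \intr(K')$, and by~\eqref{eq:formula-norm} the norm $\|\gamma_K - \gamma_{K'}\|$ controls the sup-norm difference of the defining linear forms (after matching up facets). Second, I would argue a \emph{stability} statement: since $K$ is a simplex with $n+1$ facets, each carrying exactly one point of $S$ in its relative interior, there is $\epsilon_0 > 0$ so that whenever $\|\gamma_K - \gamma_{K'}\| < \epsilon_0$, the set $K'$ is also a simplex whose $n+1$ facet-hyperplanes are small perturbations of those of $K$; in particular the vertices of $K'$ are within $O(\epsilon_0)$ of the vertices of $K$, and each facet of $K'$ contains exactly one point of $S$ (the same points $y^1,\dots,y^{n+1}$ that lie on $K$'s facets, since these are interior to the facets and $S$ is discrete, so a sufficiently small perturbation cannot drop them off a facet nor bring a new $S$-point onto one while keeping $S$-freeness — here I would invoke the maximality/$S$-free bookkeeping exactly as in the proof of Lemma~\ref{ex:simplex} and~\cite[Lemma 3.6]{bhk2}). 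Third, having pinned the $S$-points of $K'$ to be exactly $\{y^1,\dots,y^{n+1}\}$, their affine hull is still $H \subsetneq \R^n$, so Theorem~\ref{thm:extreme-simplices} applies verbatim to $K'$ and gives that $\gamma_{K'}$ is not extreme. This yields the theorem with $\epsilon := \epsilon_0$.

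\medskip\noindent\textbf{Main obstacle.} The delicate point is the stability step: I must show that a minimal CCGF $\gamma'$ close to $\gamma_K$ in the norm~\eqref{eq:norm-sublinear} forces the underlying body $K'$ to be close to $K$ in a strong enough sense — not merely Hausdorff-close, but \emph{combinatorially} identical (same number of facets, same $S$-points on them, each in the relative interior). The subtlety is that a priori $K'$ could have more facets than $K$, or an $S$-point could sit at a vertex rather than in a facet's relative interior; ruling these out requires using that $K$'s facets each carry their $S$-point strictly inside, so there is a uniform ``buffer'' that survives small perturbations, plus the discreteness of $S$ to ensure no new $S$-point enters the picture. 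Once this rigidity is established the rest is a direct appeal to the already-proved Theorem~\ref{thm:extreme-simplices}.
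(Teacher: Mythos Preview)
Your overall strategy matches the paper's: reduce to Theorem~\ref{thm:extreme-simplices} by showing that any nearby minimal CCGF $\gamma'=\gamma_{K'}$ has $K'$ a simplex with $K'\cap S = K\cap S$, hence with the same deficient affine hull. Where you diverge is in the stability step, and the paper's argument there is considerably simpler than what you outline.

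You propose to match facets of $K'$ to facets of $K$, control the perturbation of the defining linear forms via~\eqref{eq:formula-norm}, and then argue that vertices move only $O(\epsilon)$ and the same $S$-points stay in the relative interiors. As you correctly flag, the obstacle is that a priori $K'$ might have \emph{more} facets than $K$, and your perturbation picture (vertices of $K'$ close to vertices of $K$) tacitly assumes the combinatorial type is already fixed, which is circular. You gesture at resolving this via buffers and discreteness, but do not actually close the gap.

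The paper sidesteps this entirely with a containment-plus-counting argument. Set $M=\sup\{\|x\|_1:x\in K\}$ and choose $\epsilon>0$ so small that the dilation $K_\epsilon:=(1+\epsilon M)K$ satisfies $K_\epsilon\cap S = K\cap S$ (possible by discreteness of $S$ and boundedness of $K$). For any $x\in K'$ one has $\gamma_K(x)\le \gamma'(x)+\epsilon\|x\|_1\le 1+\epsilon M$, so $K'\subseteq K_\epsilon$. Hence $K'\cap S\subseteq K\cap S$, a set of exactly $n+1$ points. But $K'$ is maximal $S$-free, so by Theorem~\ref{thm:geoCGF}(2) each facet carries an $S$-point in its relative interior; these are distinct, so $K'$ has at most $n+1$ facets, and being full-dimensional it has at least $n+1$. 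Thus $K'$ is a simplex with $K'\cap S=K\cap S$, and Theorem~\ref{thm:extreme-simplices} finishes. No facet-matching, no vertex-tracking, no combinatorial rigidity argument is needed: the one-sided containment $K'\subseteq K_\epsilon$ plus the facet count from maximality does all the work.
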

\begin{proof}

Let $M = \sup\{\|x\|_1: x \in K\}$. Let $\epsilon > 0$ such that $K_\epsilon \cap S = K \cap S$ where $K_\epsilon := \{ x \in \R^n : \gamma_K(x) \leq 1+\epsilon M\}$. Note that $\gamma_{K_{\epsilon}} = \frac{1}{1+\epsilon M}\gamma_K$ and therefore $K_\epsilon = (1+\epsilon M)K$. So, $K_\epsilon$ is a simplex.

Let $\gamma'$ be a minimal CCGF such that $\|\gamma_K - \gamma'\| < \epsilon$. Let $K' = \{x \in \R^n : \gamma'(x) \leq 1\}$. The inequality $\|\gamma_K - \gamma'\| < \epsilon$ implies that for any $x \in K'$, $|\gamma_K(x) - \gamma'(x)| \leq \epsilon\|x\|_1$. Therefore,
$$
\gamma_K(x) \leq \gamma'(x) + \epsilon\|x \|_1 \leq 1 + \epsilon M.
$$
Thus, $K' \subseteq K_\epsilon$ and therefore, $K'$ is bounded.  Since $\gamma'$ is a minimal CCGF, it can be shown that $K'$ is a maximal $S$-free convex set. By Theorem~\ref{thm:geoCGF} part 2., $K'$ is a polytope with one point from $S$ in the relative interior of each facet.  Since $K' \cap S \subseteq K_\epsilon \cap S = K \cap S$ and $|K \cap S| = n+1$, this implies that $K'$ has $n+1$ facets and is therefore, a full-dimensional simplex. Moreover, the affine hull of $K' \cap S = K \cap S$ is a strict affine subspace of $\R^n$. By Theorem~\ref{thm:extreme-simplices}, $\gamma'$ is not extreme. \mqed
\end{proof}

\begin{proof}[Proof of Theorem~\ref{thm:contNegative}] Using simplices like the one constructed in Lemma~\ref{ex:simplex} that are shifted by $b$ to be maximal $S$-free convex sets, and Theorem~\ref{thm:negative}, one can find examples of minimal CCGFs and an $\epsilon >0$ such that all extreme CCGFs are at least $\epsilon$ distance away in the norm defined by~\eqref{eq:norm-sublinear}.  \mqed
\end{proof}

%\section{Acknowledgements}
	
%	 \rhnote{Is this where I should put this?}\mmnote{In CS they usually put as a footnote to the title of the paper}
%#############################################################
%#############################################################
%#############################################################
%#############################################################

%\clearpage
%\bibliographystyle{splncs03}
%\providecommand\ISBN{ISBN }
%\bibliographystyle{../amsabbrvurl}
%\bibliography{references.bib,full-bib.bib}

\clearpage
\bibliographystyle{spmpsci}
\bibliography{full-bib}

%#############################################################
%#############################################################
%#############################################################
%#############################################################

\ifipco
\else

	\newpage
	\appendix
	\noindent {\bf \LARGE Appendix}

	\section{Proof of Lemma \ref{lemma:minPiAB}} \label{app:minPiAB}
	
				By definition, $\pi_\delta(0) = 0$ and $\pi_\delta$ is periodic modulo $\Z$.  By case analysis, it can be verified that $\pi_\delta$ also satisfies the symmetry condition.
		Therefore, by Theorem~\ref{thm:minimal}, it suffices to prove that $\pi_\delta$ is subadditive in order to verify minimality.

		Let $\B = \{0, \delta, b - \delta, b, b + \delta, 1- \delta\} + \Z$ be the infinite set of breakpoints of the periodic piecewise linear function $\pi_\delta$.  Note that, by assumption that $\delta \in (0, \min\{ \frac{b}{2}, \frac{1-b}{2}\})$ and $b \in (0,1)$, we have that $0 < \delta < b-\delta < b < b + \delta < 1 - \delta < 1$.  By Lemma~\ref{lem:verts}, it suffices to check that $\Delta \pi_\delta \geq 0$ on $\verts(\Delta \P_\B)$.  
		  Every vertex of this complex satisfies either $x,y \in \B$ or $x, x+y \in \B$ or $y, x+y \in \B$.  By symmetry, we will ignore the third case.   Note that for any $x \in \B$, we have $\pi_\delta(x) \in \{0, \tfrac{1}{2}, 1\}$.  By periodicity modulo $\Z$, we can assume that $x,y \in [0,1]$ and hence $x+y \in [0,2]$.

\begin{enumerate}[{Case} 1. ]
	\item Suppose $x,y\in \B$.
	
	\begin{enumerate}[{Case 1}a. ]
		\item Suppose $\pi_\delta(x) + \pi_\delta(y) \geq 1$.\\
		 Since $\pi_\delta \leq 1$, $\Delta \pi_\delta(x,y) \geq 0$.  Furthermore, equality only holds if $\pi_\delta(x+y) = 1$, which is only attained at $x+y = b$ or $x+y = 1+b$.  Hence, in this case, equality is only attained in the case of the symmetry condition.
	
		\item Suppose $\pi_\delta(x) + \pi_\delta(y) = \tfrac{1}{2}$.\\
		Since $x,y\in \B$, this implies that either $\pi_\delta(x) = 0$ or $\pi_\delta(y) = 0$.  Without loss of generality, suppose $\pi_\delta(x) = 0$.  Then $x \in \Z$ and $x+y \equiv y \pmod 1$.   Hence, $\Delta\pi_\delta(x,y) = 0$ in this case.

		\item Suppose $\pi_\delta(x) + \pi_\delta(y) = 0$.\\
		  Since $\pi_\delta \geq 0$, it follows that $x,y \in \Z$.  Hence, $x + y \in \Z$ and $\Delta\pi_\delta(x,y) = 0$ in this case.
	
	\end{enumerate}
	
	\item Suppose $x,x+y \in \B$, $y \notin \B$.
	
	\begin{enumerate}[{Case 2}a. ]
	
		\item Suppose $\pi_\delta(x) \geq \pi_\delta(x+y)$.\\
		Since $\pi_\delta \geq 0$ and $y \notin \B$, this implies that $\Delta \pi_\delta(x,y) > 0$.

		\item Suppose $\pi_\delta(x) < \pi_\delta(x+y)$.\\
	       Since $y \notin \B$, $\pi_\delta(x) \neq 0$.  Therefore $\pi_\delta(x) = \tfrac{1}{2}$ and $\pi_\delta(x+y) = 1$.  But then $x+y = b$ or $x+y=1+b$.  Since $b - x \in \B$ and $1+b-x \in U$ for all $x \in \B$, it follows also that $y \in \B$. This is a contradiction.

	\end{enumerate}
	
\end{enumerate}

	Hence, we have established that for $(x,y) \in \verts(\Delta \P_{\B})$, $\Delta \pi_\delta \geq 0$ and thus, by Lemma~\ref{lem:verts}, $\pi_\delta$ is subadditive. Moreover, the arguments above show that $\Delta \pi_\delta(x,y) = 0$ if and only if either $x$ or $y$ in $\Z$, or $x+y \in b+\Z$. Consider the region $[\delta, 1-\delta]^2$.  Since $x = \delta, y = \delta, x  = 1-\delta$, and $y = 1-\delta$ are four lines of the polyhedral complex $\Delta \P_{\B}$, the set $[\delta, 1-\delta]^2$ is a union of faces of $\Delta \P_{\B}$.   The only vertices of $\Delta \P_{\B}$ in $[\delta, 1-\delta]^2 \cap E(\pi_\delta)$ are on the lines $x+y = b$ and $x+y = 1 +b$.  Since these lines are separated in the polyhedral complex by the line $x+y = 1$, it follows that $[\delta, 1-\delta]^2 \cap E(\pi_\delta)  = \{(x,y) : x+y = b \text{ or } x+y = 1+b\} \cap [\delta, 1-\delta]^2$.  This proves that
	$E(\pi_\delta) \subseteq E_\delta \cup E_{b} \cup E_{1+b}$.
	
	This also shows that for all other points $(x,y) \in [0,1]^2 \setminus E(\pi_\delta) \subseteq (E_\delta \cup E_{b} \cup E_{1+b})$, $\Delta \pi_\delta (x,y)$ must take a positive value.  In particular, all points in $\verts(\Delta \P_\B) \cap \left( [0,1]^2 \setminus (E_\delta \cup E_{b} \cup E_{1+b}) \right)$ have a strictly positive value for $\Delta\pi_\delta$. Since there are only finitely many such vertices, there exists $\gamma > 0$ such that $\Delta\pi_\delta(x,y) > \gamma$ for all $(x,y) \in \verts(\Delta \P_\B) \cap \left( [0,1]^2 \setminus (E_\delta \cup E_{b} \cup E_{1+b}) \right)$. By Lemma~\ref{lem:verts}, $\Delta\pi_\delta(x,y) > \gamma$ for all $(x,y) \in [0,1]^2 \setminus (E_\delta \cup E_{b} \cup E_{1+b})$. This concludes the proof.

\fi

\end{document}